\documentclass[a4paper,reqno,twoside]{amsart}
\usepackage{bbm}
\usepackage[left=3.5cm,right=3.5cm,top=3.5cm,bottom=3.5cm,footskip=1.5cm,headsep=1cm,bindingoffset=0.cm,]{geometry}

\usepackage[utf8]{inputenc}
\usepackage[english]{babel}
\usepackage[T1]{fontenc} 
\usepackage{lmodern} 
\rmfamily 
\DeclareFontShape{T1}{lmr}{b}{sc}{<->ssub*cmr/bx/sc}{}
\DeclareFontShape{T1}{lmr}{bx}{sc}{<->ssub*cmr/bx/sc}{}

\numberwithin{equation}{section}
\usepackage{amsmath}
\usepackage{amssymb}
\usepackage{amsthm}
\usepackage{amsfonts}
\usepackage{mathtools}

\usepackage{mathdots}
\usepackage{caption}
\usepackage{subcaption}

\usepackage{dsfont} 
\usepackage[format=hang]{caption}
\usepackage[normalem]{ulem}

\usepackage{standalone}
\usepackage{graphicx}
\usepackage{imakeidx}
\makeindex
\usepackage{setspace}
\usepackage{appendix}
\usepackage{pdfpages}
\usepackage{upgreek}
\usepackage{tabulary}
\usepackage{booktabs}
\usepackage{extarrows}
\usepackage{tabularx}
\usepackage{float}
\restylefloat{table}
\usepackage{enumitem}
\usepackage{csquotes}
\usepackage{bm}
\usepackage{orcidlink}
\usepackage{lipsum}

\usepackage{tikz}
\usepackage{tikz-layers}
\usepackage{tikz-cd}
\usepackage[arrow, matrix, curve]{xy}
\usetikzlibrary{matrix,positioning,decorations.pathreplacing,calc}
\usetikzlibrary{
    decorations.text,%
    decorations.markings,%
    shadows}
\usetikzlibrary{calc,intersections}
\usepackage{adjustbox}
\usepackage{graphicx}

\usepackage[
    style=numeric,
    sorting=nty,
    maxnames=99,
    maxalphanames=5,
    natbib=true,
    backend=biber,
    sortcites]{biblatex}

\DeclareNameAlias{default}{family-given}

\AtEveryBibitem{
    \clearfield{url}
    \clearfield{issn}
    \clearfield{isbn}
    \clearfield{urldate}

    \ifentrytype{book}{
        \clearfield{pages}}{%
    }
}

\usepackage{xargs}                      
\usepackage[prependcaption,textsize=tiny,textwidth=3cm]{todonotes}
\newcommandx{\unsure}[2][1=]{\todo[linecolor=red,backgroundcolor=red!25,bordercolor=red,#1]{#2}}
\newcommandx{\change}[2][1=]{\todo[linecolor=blue,backgroundcolor=blue!25,bordercolor=blue,#1]{#2}}
\newcommandx{\info}[2][1=]{\todo[linecolor=OliveGreen,backgroundcolor=OliveGreen!25,bordercolor=OliveGreen,#1]{#2}}
\newcommandx{\improvement}[2][1=]{\todo[linecolor=black,backgroundcolor=black!25,bordercolor=black,#1]{#2}}
\newcommandx{\thiswillnotshow}[2][1=]{\todo[disable,#1]{#2}}

\usepackage[noabbrev,capitalize]{cleveref}
\crefname{proposition}{Proposition}{Propositions}
\crefname{equation}{}{}

\newtheorem{theorem}{Theorem}[section]
\newtheorem{lemma}[theorem]{Lemma}
\newtheorem{proposition}[theorem]{Proposition}

\newtheorem{assumption}{Assumption}
\theoremstyle{definition}
\newtheorem{definition}[theorem]{Definition}

\newtheorem{remark}[theorem]{Remark}

\crefname{assumption}{Assumption}{Assumptions}
\crefname{definition}{Definition}{Definitions}
\crefname{corollary}{Corollary}{Corollaries}
\crefname{enumi}{item}{items}

\creflabelformat{subfigure}{#2\textsc{#1}#3}

\usepackage[final]{microtype}

\makeatletter
\newsavebox\myboxA
\newsavebox\myboxB
\newlength\mylenA

\newcommand*\xoverline[2][0.75]{%
  \sbox{\myboxA}{$\m@th#2$}%
  \setbox\myboxB\null
  \ht\myboxB=\ht\myboxA%
  \dp\myboxB=\dp\myboxA%
  \wd\myboxB=#1\wd\myboxA
  \sbox\myboxB{$\m@th\overline{\copy\myboxB}$}
  \setlength\mylenA{\the\wd\myboxA}
  \addtolength\mylenA{-\the\wd\myboxB}%
  \ifdim\wd\myboxB<\wd\myboxA%
    \rlap{\hskip 0.5\mylenA\usebox\myboxB}{\usebox\myboxA}%
  \else
    \hskip -0.5\mylenA\rlap{\usebox\myboxA}{\hskip 0.5\mylenA\usebox\myboxB}%
  \fi}
\makeatother

\usepackage{fancyhdr}

\fancypagestyle{plain}{%
    \fancyhead[C]{}
    \fancyfoot[C]{}
    
    }
\fancypagestyle{nosection}{%
    \fancyhead[CE]{}
    \fancyhead[CO]{}
    \fancyfoot[CE,CO]{\thepage}
}

\usepackage{tikz}

\usetikzlibrary{matrix} 
\usetikzlibrary{arrows,arrows.meta,bending} 

\usepackage{nicematrix}

\usetikzlibrary{hobby}

\usepackage{calc}

\renewcommand{\tilde}{\widetilde}

\renewcommand{\qedsymbol}{$\blacksquare$}

\renewcommand{\epsilon}{\varepsilon}

\renewcommand{\tilde}{\widetilde}

\usepackage{tcolorbox}
\usetikzlibrary{tikzmark,calc}

\newcommand{\on}[1]{\operatorname{#1}}

\newcommandx{\suggestions}[2][1=]{\todo[linecolor=blue,backgroundcolor=blue!25,bordercolor=blue,#1]{ #2}}

\newcommandx{\comments}[2][1=]{\todo[linecolor=red,backgroundcolor=red!25,bordercolor=red,#1]{ #2}}

\newcommandx{\silvio}[2][1=]{\todo[linecolor=blue,backgroundcolor=blue!25,bordercolor=blue,#1]{Silvio: #2}}

\newcommandx{\alex}[2][1=]{\todo[linecolor=red,backgroundcolor=red!25,bordercolor=red,#1]{Alex: #2}}

\title[Perturbative Approach to Nonlinear Capacitance Matrix]{Perturbative Approach to Nonlinear Capacitance Matrix Formulations}

\begin{document}
\author[H. Ammari]{Habib Ammari \,\orcidlink{0000-0001-7278-4877}}
\address{\parbox{\linewidth}{Habib Ammari\\
 ETH Z\"urich, Department of Mathematics, Rämistrasse 101, 8092 Z\"urich, Switzerland, \href{http://orcid.org/0000-0001-7278-4877}{orcid.org/0000-0001-7278-4877}}.}
 \email{habib.ammari@math.ethz.ch}
 \thanks{}

\author[C. Thalhammer]{Clemens Thalhammer}
\address{\parbox{\linewidth}{Clemens Thalhammer\\
 ETH Z\"urich, Department of Mathematics, Rämistrasse 101, 8092 Z\"urich, Switzerland, \href{http://orcid.org/0000-0001-7278-4877}{orcid.org/0009-0006-7218-260X}}.}
 \email{clemens.thalhammer@sam.math.ethz.ch}

 \begin{abstract}
    We study a nonlinear Helmholtz system with cubic nonlinearity on high-contrast inclusions in three dimensions, and the solitons that emerge as the contrast $\delta$ tends to zero. Using the Dirichlet-to-Neumann operator and a capacitance formalism, we develop a perturbative cascade that expands the resonant frequency and field in powers of $\sqrt{\delta}$. Our main result is a rigorous two-way correspondence with a finite discrete nonlinear capacitance system: every discrete solution lifts to a continuous soliton (a convergent expansion, analytic in $\sqrt{\delta}$), and every continuous family with the natural subwavelength scaling reduces to a discrete one. The construction is algorithmic, giving higher-order corrections in both the subwavelength and non-subwavelength regimes, the latter via a frequency-dependent capacitance matrix. We illustrate the theory numerically and characterise a symmetry-breaking bifurcation in a symmetric dimer.
 \end{abstract}

\maketitle

\noindent \textbf{Keywords.} Nonlinear subwavelength resonance, nonlinear discrete approximation, Helmholtz equation, perturbation theory.\par

\bigskip

\noindent \textbf{AMS Subject classifications.}  35P25, 35B25, 35B34 35C20.
\\

\tableofcontents

\section{Introduction}\label{sec:intro}
High-contrast inclusions embedded in a homogeneous medium support subwavelength
resonances: localized resonant modes whose wavelength greatly exceeds the size
of the inclusions. The canonical example is the Minnaert resonance of an air
bubble in water~\cite{minnaert1933XVI, ammariMinnaert2018}, and the phenomenon underlies much of
the recent activity in metamaterials~\cite{davies2025roadmap} and subwavelength
physics~\cite{cbms}. A central insight of this theory is
that, in the high-contrast limit $\delta \to 0$, the resonant behaviour of the
full continuous scattering problem is governed at leading order by a
finite-dimensional object, the capacitance matrix, whose eigenvalues determine
the resonant frequencies~\cite{ammari.davies.ea2024Functional}. 
 
Much less is understood once nonlinearity enters. A well-studied example\cite{angermann2023radiation,evequoz2015dual,griesmaier2022inverse} is the class of Kerr nonlinearities, where the refractive index depends on the local field
intensity~\cite{gorbach2009spatial}, and the time-harmonic problem becomes a
nonlinear Helmholtz system whose resonant modes are continuous solitons. 
A nonlinear analogue of the capacitance reduction was proposed
in~\cite{ammari2025analysis}, where existence was established only at first order
in $\delta$. This leaves two natural questions open: does the discrete model
determine the continuous soliton to all orders, with a quantitative range of
validity; and is the reduction faithful, in the sense that every continuous
soliton with the natural subwavelength scaling actually arises from the discrete
system?
 
In this paper, we answer both questions affirmatively. Recasting the transmission
problem as a weak equation on the interior domain via the exterior
Dirichlet-to-Neumann operator, following the variational approach
of~\cite{feppon.cheng.ea2023Subwavelength, fepponSubwavelength2024, angermann2023radiation}, we expand the frequency and
field as fractional power series in $\sqrt{\delta}$ and solve the resulting
cascade recursively. Our first main result,Theorem~\ref{thm:approximation_convergence}, shows
that any solution of the discrete nonlinear capacitance system lifts, under a
non-degeneracy assumption, to a continuous soliton given by a convergent
expansion with explicit recursive coefficients; in particular, it is analytic in $\sqrt{\delta}$. The principal
difficulty is controlling the convolution sums produced by the coupling of
frequency to field and by the cubic term, which we handle through a
combinatorial concentration estimate. Our second main result
Theorem~\ref{thm:converse} is the converse: every continuous family obeying
the natural bounds reduces, after passing to a subsequence, to a solution of the
discrete system. Together, these establish a two-way correspondence, showing that
the discrete model captures the nonlinear resonances faithfully.
 
We then extend the perturbative method beyond the subwavelength regime, where
the frequency is $\mathcal{O}(1)$ at leading order, the nonlinearity no longer acts as a
perturbation, and the relevant object is a frequency-dependent capacitance
matrix~\cite{ammari2026frequency}. For isolated eigenvalues, we obtain asymptotic
expansions to arbitrary order, together with an existence result for
small-amplitude solitons.
Finally, the reduction to a finite discrete system makes techniques from the
discrete self-trapping and discrete nonlinear Schr\"odinger
literature~\cite{eilbeck1985discrete,pankov2005gap, pankov2010gap} applicable to subwavelength
problems: as an illustration, we exhibit a symmetry-breaking bifurcation in a
symmetric dimer, with a criterion expressed through the eigenvalues of the
generalised capacitance matrix.

This paper is structured as follows. In Section 2, we introduce the problem formulation and some basic results about the Dirichlet-to-Neumann operator and capacitance matrices in the subwavelength regime. In Section 3, we then state our main results. Sections 4 and 5 are then devoted to the proofs of these. In Section 6, we extend the approach to the non-subwavelength regime. Section 7 serves to numerically illustrate these findings. Additionally, we show that the emergence of symmetry breaking bifurcations as an application. In Section 8, we conclude the paper with a brief summary of the findings and a further outlook.
\section{Problem Setting and Preliminaries}\label{sec:setting}

\subsection{Problem Formulation}
Let $\mathcal{I}$ be a finite or infinite index set and let $D = \{D_i\}_{i \in \mathcal{I}}$ be a collection of pairwise disjoint bounded inclusions in $\mathbb{R}^3$, each with $\mathcal{C}^1$ boundary. When $\mathcal{I}$ is infinite, we additionally require the arrangement to be periodic, that is, there exists a lattice $\Lambda \subset \mathbb{R}^3$ such that the collection $D$ is invariant under translations by elements of $\Lambda$. To each inclusion $D_i$, we assign a wave speed $v_i > 0$ and a contrast parameter $\delta_i > 0$, and denote by $v$ the wave speed in the exterior domain $\mathbb{R}^3 \setminus \overline{D}$. We set
\begin{equation}
    v_{\min} := \min_{i \in \mathcal{I}} v_i, \qquad
    v_{\max} := \max_{i \in \mathcal{I}} v_i, \qquad
    \delta := \max_{i \in \mathcal{I}} \lvert \delta_i \rvert,
\end{equation}
and assume throughout that $0 < v_{\min} \leq v_{\max} < \infty$ and $\delta < \infty$. 

We seek nontrivial pairs $(\omega, u) = (\omega(\delta), u(\delta)) \in \mathbb{C} \times H^1_{\mathrm{loc}}(\mathbb{R}^3)$ that satisfy the nonlinear Helmholtz system
\begin{equation}\label{eq:nonlinear_problem}
    \begin{cases}
        \Delta u + \dfrac{\omega^2}{v^2}\, u = 0
        & \text{in } \mathbb{R}^3 \setminus \overline{D}, \\[0.8em]
        \Delta u + \dfrac{\omega^2}{v_i^2}\, u
        + \sigma \dfrac{\omega^2}{v_i^2}\, |u|^2 u = 0
        & \text{in } D_i, \quad i \in \mathcal{I}, \\[0.8em]
        u\big|_+ = u\big|_-
        & \text{on } \partial D_i, \quad i \in \mathcal{I}, \\[0.8em]
        \delta_i \dfrac{\partial u}{\partial \nu}\bigg|_+
        = \dfrac{\partial u}{\partial \nu}\bigg|_-
        & \text{on } \partial D_i, \quad i \in \mathcal{I}, \\[0.8em]
        u \text{ is outgoing,}
    \end{cases}
\end{equation}
where $\sigma \in \mathbb{R}\setminus\{0\}$ is the nonlinearity parameter, with $\sigma < 0$ corresponding to the focusing and $\sigma > 0$ to the defocusing case, and the subscripts $\pm$ denote the limits taken from outside and inside $D_i$, respectively. We refer to such solutions as \emph{continuous solitons} with frequency $\omega^2$. The system~\eqref{eq:nonlinear_problem} arises as the time-harmonic reduction of the nonlinear wave equation~\cite{ammari2025analysis}, with cubic nonlinearity modelling a Kerr-type effect that depends on the local field intensity~\cite{gorbach2009spatial}. The high-contrast limit $\delta \to 0$ describes inclusions whose material parameters differ strongly from the surrounding medium, a regime in which subwavelength resonances are known to emerge~\cite{ammari2025analysis}. Particular attention will be paid to the \emph{subwavelength regime}, in which $\delta \to 0$ and $\omega = \mathcal{O}(\delta^{1/2})$.


\subsection{Dirichlet-to-Neumann Operator and Capacitance Formalism}
We begin by introducing the Dirichlet-to-Neumann (DtN) operator, which reduces the exterior problem to a boundary operator. Using it, we reformulate~\eqref{eq:nonlinear_problem} as a weak equation on the interior domain and introduce the capacitance matrix that governs the leading-order subwavelength behaviour.

\begin{definition}[Exterior Dirichlet-to-Neumann Operator]
    Suppose $\lvert \mathcal{I}\rvert<\infty$ and $\omega^2$ is not a {Dirichlet scattering resonance} of $D$. The exterior Dirichlet-to-Neumann operator $\mathcal{T}^\omega : H^{1/2}(\partial D) \to H^{-1/2}(\partial D)$ is defined by
    \begin{equation}
        \mathcal{T}^\omega f = \frac{\partial u}{\partial \nu}\bigg|_{\partial D},
    \end{equation}
    where $u$ is the unique solution to
    \begin{equation}
        \begin{cases}
            \Delta u + \omega^2 u = 0 & \text{in } \mathbb{R}^3 \setminus \overline{D},\\
            u\big|_+ = f & \text{on } \partial D,\\
            u \text{  is outgoing.}
        \end{cases}
    \end{equation}
\end{definition}
The following are well known properties of the DtN map~\cite{fepponSubwavelength2024, ammariFull1999,zworski} in the finite case.

\begin{proposition}\label{prop:dtn_analytic} 
    Assume that $\lvert \mathcal{I}\rvert<\infty$. Then, the Dirichlet-to-Neumann operator $\mathcal{T}^\omega : H^{1/2}(\partial D) \to H^{-1/2}(\partial D)$  is a meromorphic operator valued function in $\omega$ with poles at 
    the scattering resonances of  $-\Delta$ in $\mathbb{R}^3\setminus \overline{D}$ with Dirichlet boundary conditions on $\partial D$.
\end{proposition}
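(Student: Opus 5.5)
We will represent $\mathcal{T}^\omega$ through layer potentials and then invoke the analytic Fredholm theorem. Let $G^\omega(x)=-e^{i\omega|x|}/(4\pi|x|)$ be the outgoing fundamental solution. Let $\mathcal{S}_D^\omega$ and $\mathcal{K}_D^{\omega,*}$ denote the single layer potential and the Neumann–Poincaré operator on $\partial D$; since $|\mathcal{I}|<\infty$, $\partial D$ is a compact $\mathcal{C}^1$ surface. The kernel $e^{i\omega|x-y|}$ is entire in $\omega$. Hence $\omega\mapsto \mathcal{S}_D^\omega\in\mathcal{L}(H^{-1/2}(\partial D),H^{1/2}(\partial D))$ and $\omega\mapsto \mathcal{K}_D^{\omega,*}\in\mathcal{L}(H^{-1/2}(\partial D))$ are entire operator-valued functions. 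To see this, expand $e^{i\omega r}/r=\sum_n (i\omega)^n r^{n-1}/n!$: the term $n=0$ is the Laplace kernel, and the terms $n\ge 1$ have smooth (or weakly singular) kernels whose operator norms grow at most like $C^n\,\mathrm{diam}(D)^{n}/n!$. The series therefore converges in operator norm for every $\omega\in\mathbb{C}$.

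For $f\in H^{1/2}(\partial D)$, we look for the exterior solution in the form $u=\mathcal{S}_D^\omega[\phi]$, with $\phi\in H^{-1/2}(\partial D)$ solving $\mathcal{S}_D^\omega[\phi]=f$. The jump relations then give
\begin{equation*}
\mathcal{T}^\omega=\Bigl(\tfrac12 I+\mathcal{K}_D^{\omega,*}\Bigr)\bigl(\mathcal{S}_D^\omega\bigr)^{-1}
\end{equation*}
wherever $\mathcal{S}_D^\omega$ is invertible. The next step is Fredholm theory. The operator $\mathcal{S}_D^\omega-\mathcal{S}_D^0$ has a smooth kernel and is therefore compact. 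In three dimensions, $\mathcal{S}_D^0:H^{-1/2}\to H^{1/2}$ is invertible by the coercivity of the Laplace single layer (Verchota, or the standard energy argument). Thus $\mathcal{S}_D^\omega=\mathcal{S}_D^0\bigl(I+(\mathcal{S}_D^0)^{-1}(\mathcal{S}_D^\omega-\mathcal{S}_D^0)\bigr)$ is an analytic family of the form invertible times (identity plus compact), and it is invertible at $\omega=0$. The analytic Fredholm theorem then shows that $(\mathcal{S}_D^\omega)^{-1}$ is meromorphic on $\mathbb{C}$, with finite-rank principal parts. Consequently $\mathcal{T}^\omega$ is meromorphic as well.

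It remains to identify the poles, and I expect this to be the main obstacle. Two facts must be established.

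First, at every $\omega$ that is not a Dirichlet resonance, $\mathcal{T}^\omega$ is holomorphic. This is subtle because $\mathcal{S}_D^\omega$ also fails to be invertible at $\omega^2$ equal to interior Dirichlet eigenvalues of $D$. At such $\omega$ the exterior problem is still uniquely solvable, so the singularity must be removable. To handle this, I would first show that $\mathcal{T}^\omega$, defined via the exterior solution operator, is single-valued and locally bounded near such points. Alternatively, one can use a combined-field representation $u=(\mathcal{D}_D^\omega-i\eta\mathcal{S}_D^\omega)[\phi]$ with $\eta>0$, whose boundary operator $\tfrac12 I+\mathcal{K}_D^\omega-i\eta\mathcal{S}_D^\omega$ is injective for real $\omega$. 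Its poles in $\mathbb{C}$ then coincide exactly with the outgoing Dirichlet resonances, because a nontrivial kernel element produces a nontrivial outgoing solution with zero Dirichlet trace, and conversely. By Riemann's removable singularity theorem, locally bounded isolated singularities are removable, so $\mathcal{T}^\omega$ extends holomorphically away from the resonances.

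Second, at a Dirichlet resonance $\omega_0$, $\mathcal{T}^\omega$ genuinely has a pole. Suppose instead that $\mathcal{T}^\omega$ stayed bounded near $\omega_0$. Take $u_0$ to be the nontrivial outgoing resonant state, with $u_0|_{\partial D}=0$ and $\partial_\nu u_0\ne 0$ (by unique continuation, since $u_0\not\equiv0$). Pairing the Green identity for $u_0$ against the outgoing solution with data $f$, and letting $\omega\to\omega_0$, contradicts uniqueness. Equivalently, the resolvent of the exterior Dirichlet Laplacian has a pole at $\omega_0$, and this pole transfers to $\mathcal{T}^\omega$ through the Green representation formula.

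Throughout, I would rely on the cited standard references~\cite{ammariFull1999,zworski} for the Rellich uniqueness and the meromorphic continuation of the outgoing resolvent $(-\Delta_{\mathrm{ext}}-\omega^2)^{-1}$. Given that continuation, the cleanest route to the conclusion is to write $\mathcal{T}^\omega f=\partial_\nu\bigl(\chi E f-R(\omega)(\Delta+\omega^2)(\chi Ef)\bigr)$, where $E$ is a fixed extension operator and $\chi$ a cutoff. This inherits meromorphy, with poles exactly at the resonances, directly from $R(\omega)$.
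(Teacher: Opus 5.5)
The paper gives no proof of this proposition. It records it as a standard fact and defers to the references cited just before the statement, so your proposal is a genuinely different, self-contained route. You use the single-layer representation $\mathcal{T}^\omega=(\tfrac12 I+\mathcal{K}_D^{\omega,*})(\mathcal{S}_D^\omega)^{-1}$, analytic Fredholm theory anchored at the invertible $\mathcal{S}_D^0$, removal of the spurious poles at interior Dirichlet eigenvalues, and finally the resolvent formula. The overall logic is sound. Your route buys an explicit formula and a cheap proof of what the paper actually uses later. Near $\omega=0$, holomorphy follows from a Neumann series around the invertible $\mathcal{S}_D^0$ (three dimensions), with no Fredholm theory or resolvent continuation at all. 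Cauchy estimates on a disc of holomorphy then give the bound $\lVert\mathcal{T}_l\rVert\le CR^l$ of Lemma~\ref{lem:DtN_decay}, which the paper states without proof. Your final resolvent step, by contrast, rests on the same black box as the paper's citation, namely the meromorphic continuation of the outgoing Dirichlet resolvent.

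Three points need correcting.

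\textbf{The combined-field claim.} The claim that the poles of the combined-field inverse coincide with the resonances is false off the real axis. Suppose $\omega^2$ is an eigenvalue of the interior impedance problem $\Delta v+\omega^2v=0$ in $D$, $\partial_\nu v=\pm i\eta v$ on $\partial D$; the sign depends on the jump conventions, and such $\omega^2$ exist and are non-real. Then $v|_{\partial D}$ lies in the kernel of the combined-field operator, while the exterior field it generates vanishes identically. These points come in pairs $\pm\omega$, so some lie in $\operatorname{Im}\omega>0$, where there are no resonances. This does no harm, because you only need the combined-field operator near the real axis. That is where all the spurious poles of $(\mathcal{S}_D^\omega)^{-1}$ sit: for non-real, non-resonant $\omega$, $\mathcal{S}_D^\omega$ is already injective, hence invertible. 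Near the real axis, injectivity plus index zero gives invertibility on a neighbourhood.

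\textbf{The genuine-pole step.} In showing that a resonance $\omega_0$ is a genuine pole, the contradiction is with solvability, not uniqueness. Suppose $\mathcal{T}^\omega$ stayed bounded near $\omega_0$. Green's representation $u=\mathcal{D}_D^\omega[f]-\mathcal{S}_D^\omega[\mathcal{T}^\omega f]$ (up to sign conventions) has layer potentials entire in $\omega$. It would therefore converge to an outgoing solution at $\omega_0$ with trace $f$, for every $f$. Now apply Green's identity between this field and $u_0$ at the \emph{same} frequency, using the bilinear pairing. The contribution at infinity then vanishes identically, even though both fields grow exponentially. This forces $\langle f,\partial_\nu u_0\rangle_{\partial D}=0$ for all $f$, which is impossible since $\partial_\nu u_0\neq0$. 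Do not apply the identity at two different frequencies and then let $\omega\to\omega_0$: the terms at infinity do not cancel in that case. With this argument in place, the resolvent formula is needed only for holomorphy away from the resonances. No separate non-cancellation argument for the pole of the resolvent is then required.

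\textbf{The sign in the resolvent formula.} With $R(\omega)=(-\Delta_{\mathrm{ext}}-\omega^2)^{-1}$, the field is $\chi Ef+R(\omega)(\Delta+\omega^2)(\chi Ef)$. The minus sign you wrote gives $(\Delta+\omega^2)u=2(\Delta+\omega^2)(\chi Ef)$ rather than $0$.
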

In the setting of a {crystal} of resonators {in two dimensions}, the following weaker result is known \cite[Proposition 2.2]{ammari2025analysis}(in this reference only the two-dimensional setting was considered, however using Friedrichs' inequality, see, e.g., \cite{Neas2012}, the result can easily be generalised).
\begin{proposition}\label{prop:dtn_analytic_periodic}
    Assume that $\lvert \mathcal{I}\rvert=\infty$, the resonators form a crystal and that $\omega^2_0$ is not in the spectrum of the exterior Dirichlet Laplacian. Then, the Dirichlet-to-Neumann operator $\mathcal{T}^\omega : H^{1/2}(\partial D) \to H^{-1/2}(\partial D)$  is analytic in a small neighbourhood of $\omega_0^2$.
\end{proposition}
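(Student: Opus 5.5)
The plan is to reduce the periodic exterior problem to a single unit cell by Floquet--Bloch theory, and then obtain analyticity of the DtN map from analytic Fredholm theory applied to a variational formulation of the exterior Dirichlet problem. Since $D$ is invariant under $\Lambda$, we decompose boundary data $f \in H^{1/2}(\partial D)$ into quasi-periodic components $f_\alpha$, with $\alpha$ ranging over the Brillouin zone $Y^*$. The exterior problem is then posed on the unit cell $Y\setminus\overline{D_0}$ with $\alpha$-quasi-periodic conditions on $\partial Y$ and Dirichlet data $f_\alpha$ on $\partial D_0$; in the crystal setting this is the meaning of ``outgoing''. We write the solution as $u = Ef + w$. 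Here $E$ is a fixed bounded lifting of the trace into $H^1$, compactly supported near $\partial D_0$ and independent of $\omega$. The correction $w \in H^1_{0,\alpha}$, the space of quasi-periodic functions vanishing on $\partial D_0$, solves
\begin{equation*}
a_\alpha(w,\phi) - \omega^2 (w,\phi)_{L^2} = -a_\alpha(Ef,\phi) + \omega^2 (Ef,\phi)_{L^2} \quad \text{for all } \phi \in H^1_{0,\alpha}.
\end{equation*}

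Next we show that the sesquilinear form $a_\alpha(w,\phi) = \int \nabla w \cdot \overline{\nabla \phi}$ is coercive on $H^1_{0,\alpha}$, uniformly in $\alpha$. This is where Friedrichs' inequality \cite{Neas2012} enters, as the authors indicate: because $w$ vanishes on $\partial D_0$, we have $\|w\|_{L^2} \le C \|\nabla w\|_{L^2}$ on the bounded cell, with $C$ independent of $\alpha$. By Lax--Milgram, the operator $A_\alpha$ induced by $a_\alpha$ is an isomorphism $H^1_{0,\alpha} \to (H^1_{0,\alpha})^*$, and $A_\alpha - \omega^2 I$ is an analytic (indeed affine) family in $\omega^2$. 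The full-crystal operator is the direct integral over $\alpha$ of these fibre operators. It is invertible exactly when $\omega^2$ lies outside the spectrum of the exterior Dirichlet Laplacian, since that spectrum is the union over $\alpha$ of the fibre spectra. Because the resolvent set is open, the assumption $\omega_0^2 \notin \sigma$ yields a neighbourhood $U$ of $\omega_0^2$ on which the resolvent $(A - \omega^2)^{-1}$ exists. On $U$ it is analytic, by the Neumann series around $\omega_0^2$:
\begin{equation*}
(A-\omega^2)^{-1} = \sum_{n\ge 0} (\omega^2-\omega_0^2)^n (A-\omega_0^2)^{-n-1},
\end{equation*}
which converges for $|\omega^2-\omega_0^2| < \|(A-\omega_0^2)^{-1}\|^{-1}$.

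Finally, we express $\mathcal{T}^\omega f$ as the weak normal derivative via Green's formula: $\langle \mathcal{T}^\omega f, g\rangle$ equals $a(u, Eg) - \omega^2 (u, Eg)$ up to sign conventions. This is a composition of bounded operators that depend analytically on $\omega^2$: the lifting $E$, the analytic resolvent, and multiplication by $\omega^2$. Hence $\mathcal{T}^\omega$ is analytic in $U$ as a map $H^{1/2}(\partial D) \to H^{-1/2}(\partial D)$. A point to handle carefully is that $\partial D$ is unbounded, so these spaces should be read as the direct integral of the cell spaces, or as uniformly local spaces. The bounds of all constants uniformly in $\alpha$ is what makes the direct-integral argument go through.

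I expect the main obstacle to be the identification of the spectrum of the full-space exterior Dirichlet operator with the union of the fibre spectra, which yields a uniform-in-$\alpha$ bound on $\|(A_\alpha-\omega_0^2)^{-1}\|$ from a single spectral hypothesis. For this I would use the standard Floquet--Bloch unitary equivalence: the resolvent norm of the direct integral equals the essential supremum of the fibre resolvent norms. The fibre operators depend continuously on $\alpha$ after conjugating by $e^{i\alpha\cdot x}$ to a fixed periodic space. Compactness of $\overline{Y^*}$ then turns pointwise invertibility into a uniform bound.
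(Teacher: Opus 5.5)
Your proof is correct, but it takes a different route from the one the paper points to. The paper gives no argument of its own: it cites the two-dimensional result \cite[Proposition 2.2]{ammari2025analysis} and refers to Friedrichs' inequality \cite{Neas2012}.

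The natural reading of that remark is a direct argument on the whole exterior domain $\Omega=\mathbb{R}^3\setminus\overline{D}$:
\begin{itemize}
\item Apply Friedrichs' inequality on each translated cell. The constant is cell-independent by periodicity and, as you note yourself, needs no condition on $\partial Y$.
\item Summing over cells gives coercivity of $\int_\Omega \nabla w\cdot\nabla\overline{\phi}$ on $H^1_0(\Omega)$.
\item The hypothesis says exactly that $\omega_0^2$ lies in the resolvent set of the full exterior Dirichlet Laplacian $A$, so its resolvent is analytic near $\omega_0^2$.
\item Your lifting-plus-Green's-formula representation of $\mathcal{T}^\omega$ then finishes the proof.
\end{itemize}

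Compared with this, your Floquet--Bloch decomposition is a detour that your own argument never uses. The Neumann series you write is already for the full operator $A$. Consequently, the step you single out as the main obstacle is not needed for the proposition as stated: identifying $\sigma(A)$ with the union of the fibre spectra and extracting uniform-in-$\alpha$ resolvent bounds.

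What the fibrewise version buys is analyticity of the $\alpha$-quasi-periodic DtN maps, uniformly in $\alpha$. That is the content behind the Floquet--Bloch diagram in the paper.

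The direct version is shorter. It also makes clear that the case the paper actually uses is $\omega_0=0$, for the Taylor expansion of $\mathcal{T}^{\omega/v}$. This is exactly the case where your Friedrichs/Lax--Milgram step verifies the hypothesis $0\notin\sigma(A)$.

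Two small points:
\begin{itemize}
\item With several resonators per cell, replace $D_0$ by their union.
\item Read $H^{1/2}(\partial D)$ as the global trace space of $H^1$, which is equivalent to the direct integral. The uniformly local reading you mention would need an additional decay estimate on the resolvent, which your argument does not supply.
\end{itemize}
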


In particular, the above results imply that the DtN operator admits a convergent power series expansion at points $\omega \in \mathbb{R}$, away from the exterior Dirichlet spectrum. It is known that for certain arrangements, the exterior Dirichlet Laplacian does exhibit spectral gaps \cite{d2023gaps, ferraresso2015singular}.

To obtain a formulation on the interior domain alone, we multiply each equation in~\eqref{eq:nonlinear_problem} by a test function $w \in \mathcal{C}^\infty_c(\mathbb{R}^3)$, integrate by parts over each $D_i$, and use the transmission conditions to replace the exterior normal derivative by $\delta\,\mathcal{T}^{\omega/v} u$ on $\partial D$. This yields the weak formulation: Find $u \in H^1(D)$ such that
\begin{equation}\label{eq:weak_formulation}
    \sum_{i\in \mathcal{I}}\int_{D_i}\nabla u \cdot\nabla \overline{w} - \frac{\omega^2}{v_i^2}\left(u\overline{w} + \sigma \lvert u\rvert^2 u \overline{w}\right)\,dx - \delta \langle \mathcal{T}^{\omega/v} u, w\rangle_{\partial D} = 0
\end{equation}
for all $w \in H^1(D)$. This formulation was first used in the one-dimensional~\cite{feppon.cheng.ea2023Subwavelength} and then extended to three dimensions in~\cite{fepponSubwavelength2024}. In these works, the key observation is that~\eqref{eq:weak_formulation} can be treated as a perturbation of the coercive sesquilinear form
\begin{equation}
    a(u,w) := \int_D \nabla u \cdot \nabla \overline{w}\,dx + \int_D u\,dx \int_D \overline{w}\,dx,
\end{equation}
whose coercivity follows from the Poincar\'e--Wirtinger inequality. In particular, the linear theory of the capacitance matrix formalism (see, e.g.,~\cite{ammari.davies.ea2024Functional,cbms}) was successfully recovered in the limit $\delta\rightarrow 0$ using this approach in~\cite{fepponSubwavelength2024, ammariNonlinear2025}.

We now introduce the capacitance matrix, which captures the leading-order coupling between the inclusions.

\begin{definition}[Capacitance Matrix]
    For each $i \in \mathcal{I}$, let $\mathbf{1}_{D_i}$ denote the characteristic function of $\partial D_i$:
    \begin{equation}
        \mathbf{1}_{D_i}(x) =
        \begin{cases}
            1, & x \in \partial D_i,\\
            0, & \text{otherwise}.
        \end{cases}
    \end{equation}
    The capacitance matrix $\mathcal{C} = (\mathcal{C}_{ij})$ is defined by
    \begin{equation}
        \mathcal{C}_{ij} = -\langle \mathcal{T}_0 \mathbf{1}_{D_i}, \mathbf{1}_{D_j}\rangle,
    \end{equation}
    where $\mathcal{T}_0 = \mathcal{T^0}$ is the Dirichlet-to-Neumann operator for $\omega=0$.
\end{definition}

There are several equivalent definitions of the capacitance matrix; it was originally derived via Gohberg--Sigal theory~\cite{ammariMinnaert2018, ammari.davies.ea2024Functional}. The formulation above, in terms of the static DtN operator $\mathcal{T}_0$, is the most natural for our purposes.
When $\lvert \mathcal{I}\rvert <\infty$, the capacitance matrix has the following properties~\cite{ammari.davies.ea2024Functional} in three dimensions.

\begin{lemma}\label{lem:cap_properties}
    The capacitance matrix $\mathcal{C}$ is symmetric and positive definite.
\end{lemma}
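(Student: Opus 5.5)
Since $\mathcal{C}_{ij}$ is defined through the static Dirichlet-to-Neumann operator $\mathcal{T}_0$, we reduce both claims to an energy identity for harmonic extensions. For $f\in H^{1/2}(\partial D)$, let $\mathcal{H}f$ be the unique solution of $\Delta u=0$ in $\mathbb{R}^3\setminus\overline{D}$ with $u|_+=f$ and $u(x)=O(|x|^{-1})$ as $|x|\to\infty$; in three dimensions this is the $\omega=0$ limit of the outgoing condition. Then $\mathcal{T}_0 f=\partial_\nu \mathcal{H}f|_+$. We apply Green's first identity on $B_R\setminus\overline{D}$ and let $R\to\infty$. The normal $\nu$ on $\partial D$ points outward from $D$, hence into the exterior domain. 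The boundary term on $\partial B_R$ is $O(R^{-1}\cdot R^{-2}\cdot R^2)=O(R^{-1})$ and so vanishes. This gives, for real-valued $f,g$,
\begin{equation*}
    -\langle \mathcal{T}_0 f, g\rangle_{\partial D} = \int_{\mathbb{R}^3\setminus\overline{D}} \nabla \mathcal{H}f\cdot\nabla \mathcal{H}g\,dx .
\end{equation*}
To justify this, we first note that $\nabla\mathcal{H}f\in L^2$ of the exterior, since $|\nabla \mathcal{H}f|=O(|x|^{-2})$. Uniqueness of $\mathcal{H}f$ follows from the maximum principle, or from the same energy identity applied to the difference of two solutions.

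\textbf{Symmetry.} Taking $f=\mathbf{1}_{D_i}$ and $g=\mathbf{1}_{D_j}$ in the identity gives $\mathcal{C}_{ij}=\int \nabla V_i\cdot\nabla V_j\,dx$, where $V_i:=\mathcal{H}\mathbf{1}_{D_i}$. The right-hand side is symmetric in $i$ and $j$, so $\mathcal{C}_{ij}=\mathcal{C}_{ji}$. Since $\mathbf{1}_{D_i}$ is real, $V_i$ is real, and no complex conjugation issues arise.

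\textbf{Positive definiteness.} Let $x\in\mathbb{R}^N$, with $N=|\mathcal{I}|$. By linearity of $\mathcal{H}$, $V:=\sum_i x_iV_i=\mathcal{H}\bigl(\sum_i x_i\mathbf{1}_{D_i}\bigr)$, and therefore
\begin{equation*}
    x^\top\mathcal{C}x=\int_{\mathbb{R}^3\setminus\overline D}|\nabla V|^2\,dx\ge 0 .
\end{equation*}
Suppose this quantity is zero. Then $V$ is constant on each connected component of the exterior. Because $\mathcal{I}$ is finite and each $D_i$ is bounded, the exterior has exactly one unbounded component. On that component the decay condition forces $V\equiv 0$.

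\textbf{Main obstacle.} The difficulty is to conclude that $x=0$ when the exterior has bounded components, that is, when an inclusion encloses a cavity. On such a bounded component $V$ is constant, but that constant need not vanish. We will use the standing assumption, implicit in the setting, that $\mathbb{R}^3\setminus\overline{D}$ is connected; equivalently, each $D_i$ has connected complement. This is the standard hypothesis in \cite{ammari.davies.ea2024Functional}.

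Under this assumption the exterior is the single unbounded component, so $V\equiv 0$ on all of it. By continuity of the trace, $V|_{\partial D_i}=x_i=0$ for every $i$. Hence $x=0$, and $\mathcal{C}$ is positive definite.

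Without connectedness, one instead has to show that every $\partial D_i$ meets the closure of the unbounded component. This is true, because the outer boundary of each bounded $D_i$ faces infinity. With that fact the same argument goes through, since the trace of $V$ on the portion of $\partial D_i$ adjacent to the unbounded component already forces $x_i=0$.
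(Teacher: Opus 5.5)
Your energy identity $\mathcal{C}_{ij}=\int_{\mathbb{R}^3\setminus\overline D}\nabla V_i\cdot\nabla V_j\,dx$ is the standard route. The paper gives no proof and only cites \cite{ammari.davies.ea2024Functional}, which uses this same representation. Your symmetry argument, the semi-definiteness, and definiteness when $\mathbb{R}^3\setminus\overline D$ is connected are all correct.

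The gap is in the last step. Connectedness of the exterior is not part of the paper's setting, which only asks for pairwise disjoint bounded inclusions with $\mathcal{C}^1$ boundary. So you genuinely need the general case, and the claim you use for it is false. Take $D_1=\{1<|x|<2\}$ and $D_2=\{|x|<1/2\}$. Then $\partial D_2=\{|x|=1/2\}$ lies in the closure of the bounded exterior component $\{1/2<|x|<1\}$ and does not meet the closure of the unbounded component $\{|x|>2\}$. The outer boundary of $D_2$ faces the cavity of $D_1$, not infinity. Your trace argument on the unbounded component therefore says nothing about $x_2$.

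The conclusion is still true, and the repair is short. Extend $V$ into each $D_i$ by the constant $x_i$. The inner and outer traces agree on $\partial D$, so the extension $\tilde V$ lies in $H^1_{\mathrm{loc}}(\mathbb{R}^3)$. If $x^\top\mathcal{C}x=0$, then $\nabla\tilde V=0$ a.e. on $\mathbb{R}^3$: in the exterior by the vanishing energy, and inside $D$ because $\tilde V$ is constant there. Since $\mathbb{R}^3$ is connected, $\tilde V$ is constant, hence $\tilde V\equiv 0$ by the decay at infinity, and every $x_i=0$. This needs no topological hypothesis on $D$.

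Alternatively, propagate zeros component by component. In the example, $V=0$ on $\{|x|>2\}$ gives $x_1=0$. The constant value of $V$ on the cavity must then equal both $x_1=0$ on $\{|x|=1\}$ and $x_2$ on $\{|x|=1/2\}$, which forces $x_2=0$.
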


To state the central result of the linear theory, we introduce the mass matrix $\mathbf{M} = \mathrm{diag}\!\left(\lvert D_i\rvert / v_i^2\right)$ and the generalised capacitance matrix
\begin{equation}
    \mathcal{C}^{\mathrm{gen}} := \mathbf{M}^{-1}\mathcal{C},
\end{equation}
whose eigenvalues $\lambda_1 \leq \lambda_2 \leq \cdots$ govern the leading-order subwavelength resonances.

\begin{theorem}[Fundamental Theorem of Subwavelength Physics]\label{thm:fundamental}
    The linear subwavelength resonance problem ($\sigma = 0$)  admits $\lvert \mathcal{I}\rvert$ resonant frequencies that satisfy
    \begin{equation}
        \omega_n = \sqrt{\delta\lambda_n} + \mathcal{O}(\delta),
    \end{equation}
    where $\lambda_n$ is the $n$-th eigenvalue of $\mathcal{C}^{\mathrm{gen}}$.
\end{theorem}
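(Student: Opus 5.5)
We would prove this with the weak formulation \eqref{eq:weak_formulation} at $\sigma=0$ and a Lyapunov--Schmidt reduction onto the piecewise constants. Write $u = \sum_i c_i \mathbf{1}_{D_i} + \tilde u$, where $\tilde u$ has zero mean on each $D_i$, and let $V_0 := \mathrm{span}\{\mathbf{1}_{D_i}\}$ be the kernel of the Dirichlet form $\int_D \nabla u\cdot\nabla\overline w$. Take the ansatz $\omega^2 = \delta\,\mu$ with $\mu = \mathcal{O}(1)$. The plan is to show that $\mu$ must, to leading order, be an eigenvalue of $\mathcal{C}^{\mathrm{gen}}$, and to obtain the error $\mathcal{O}(\delta)$ in $\omega$ via analytic perturbation in the variable $\sqrt{\delta}$.

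\emph{Step 1 (complementary equation).} Test \eqref{eq:weak_formulation} against $w$ with zero mean on each $D_i$. The operator acting on $\tilde u$ is the Neumann Laplacian restricted to mean-zero functions, which is coercive there by Poincar\'e--Wirtinger; the correction $\omega^2/v_i^2 + \delta\,\mathcal{T}^{\omega/v}$ has size $\mathcal{O}(\delta)$. Since $\mathcal{T}^{\omega/v}$ is analytic near $\omega=0$ by \cref{prop:dtn_analytic} ($0$ is not a Dirichlet resonance), a Neumann series gives
\[
\tilde u = \delta\, R(\omega,\delta)\,c, \qquad \lVert R\rVert = \mathcal{O}(1),
\]
with $R$ analytic in $(\omega,\delta)$. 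This step is routine.

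\emph{Step 2 (reduced equation).} Test with $w = \mathbf{1}_{D_j}$. The gradient term vanishes, and we obtain
\[
-\omega^2 \frac{\lvert D_j\rvert}{v_j^2}\, c_j - \omega^2\!\int_{D_j}\!\frac{\tilde u}{v_j^2} - \delta\,\langle \mathcal{T}^{\omega/v}u, \mathbf{1}_{D_j}\rangle = 0 .
\]
Expanding $\mathcal{T}^{\omega/v} = \mathcal{T}_0 + \mathcal{O}(\omega)$ (it is analytic in $\omega$, with a first-order term coming from the outgoing radiation) and inserting Step 1, we divide by $\delta$ and get the $|\mathcal{I}|\times|\mathcal{I}|$ system
\[
\bigl(\mathcal{C} - \mu\mathbf{M} + E(\sqrt{\delta},\mu)\bigr)c = 0, \qquad E = \mathcal{O}(\sqrt{\delta}),
\]
where $E$ is analytic. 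The $\sqrt{\delta}$ arises because $\omega = \sqrt{\delta\mu}$ appears linearly in the expansion of $\mathcal{T}$.

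\emph{Step 3 (roots of the determinant).} The condition for a nontrivial $c$ is $f(\mu,\sqrt\delta) := \det(\mathcal{C}-\mu\mathbf{M}+E)=0$. At $\delta=0$ its roots are exactly the eigenvalues $\lambda_n$ of $\mathcal{C}^{\mathrm{gen}}$, which are positive by \cref{lem:cap_properties}. For simple $\lambda_n$, the implicit function theorem gives a branch $\mu_n(\sqrt\delta) = \lambda_n + \mathcal{O}(\sqrt\delta)$. Hence
\[
\omega_n = \sqrt{\delta\,\mu_n} = \sqrt{\delta\lambda_n} + \mathcal{O}(\delta).
\]
For repeated eigenvalues we would instead apply Rouché's theorem (Gohberg--Sigal in the scalar case) to $f$ on a small circle around $\lambda_n$. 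This counts the roots with multiplicity and gives a total of $\lvert\mathcal{I}\rvert$ resonances. Each root $\mu$ still satisfies $\mu=\lambda_n+\mathcal{O}(\sqrt\delta)$: the matrix $\mathbf{M}^{-1/2}\mathcal{C}\mathbf{M}^{-1/2}$ is symmetric, so its eigenvalues are Lipschitz stable under perturbation and Puiseux loss does not occur at leading order. Both $\mathcal{C}^{\mathrm{gen}}$ and $\mathbf{M}^{-1/2}\mathcal{C}\mathbf{M}^{-1/2}$ are similar to this symmetric matrix, so they share eigenvalues.

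\emph{Main obstacle.} The delicate point is the degenerate case: we must show that the perturbed reduced problem keeps $\mathcal{O}(\sqrt\delta)$ accuracy, rather than $\mathcal{O}(\delta^{1/(2m)})$ for multiplicity $m$. We plan to handle this with a Bauer--Fike type argument for the symmetric pencil $\mathcal{C}-\mu\mathbf{M}$. Note that $E$ depends on $\mu$ itself, so the argument is a fixed-point one: apply the eigenvalue perturbation bound at a fixed $\mu$ near $\lambda_n$, then close the loop using the continuity of $E$ in $\mu$.
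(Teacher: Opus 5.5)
Your argument is correct, but it does not follow the paper, because the paper gives no proof of Theorem~\ref{thm:fundamental}. It states it as a known result from the capacitance literature, where it was originally obtained via layer potentials and Gohberg--Sigal theory.

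The in-paper mechanism that reproduces it is the cascade of Subsection~\ref{ssec:cascade} at $\sigma=0$ (cf.\ the remark in Section~\ref{sec:nonsubwavelength} pointing to \cite{feppon2022modal}). Testing level $n=2$ against $\mathcal{V}_d$ gives $(\mathcal{C}-\omega_1^2\mathbf{M})\mathbf{a}=0$, so $\omega_1=\sqrt{\lambda_n}$. The higher coefficients then follow recursively, with convergence coming from the combinatorial bounds of Section~\ref{sec:proof1}. That route needs $\lambda_n$ to be simple, which is essentially what Assumption~\ref{as:invertability} says at $\sigma=0$. In return it gives an explicit algorithm for every coefficient.

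Your Schur-complement reduction to $\det(\mathcal{C}-\mu\mathbf{M}+E(\sqrt{\delta},\mu))=0$ is close to the variational approach of \cite{fepponSubwavelength2024}, and it gets more with less machinery. The analytic implicit function theorem gives analyticity of $\mu_n$ in $\sqrt{\delta}$ for simple $\lambda_n$ without any coefficient-growth estimates. Rouch\'e plus Bauer--Fike then handles repeated eigenvalues and the count of $\lvert\mathcal{I}\rvert$ resonances, neither of which the cascade addresses.

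Two minor points. First, the term $\omega^2\int_{D_j}\tilde u/v_j^2$ in Step~2 vanishes identically, since $\tilde u$ has zero mean on $D_j$. Second, the degenerate case needs no fixed-point loop. The bound $\lVert\mathbf{M}^{-1/2}E(\sqrt{\delta},\mu)\mathbf{M}^{-1/2}\rVert\le C\sqrt{\delta}$ holds uniformly for $\mu$ in a compact neighbourhood of $\lambda_n$. So Bauer--Fike applied at any root $\mu$ directly gives $\operatorname{dist}(\mu,\sigma(\mathcal{C}^{\mathrm{gen}}))\le C\sqrt{\delta}$. Bauer--Fike only needs $\mathbf{M}^{-1/2}\mathcal{C}\mathbf{M}^{-1/2}$ to be normal, so the non-Hermitian radiative part of $E$ (from the first-order term of $\mathcal{T}^{\omega/v}$) causes no difficulty.
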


If instead $\lvert \mathcal{I}\rvert = \infty$ and the arrangement is periodic, it is easy to see that the capacitance matrix takes the form of a Laurent operator. Classically, this regime has been studied by first applying the Floquet-Bloch transform to the following linear Helmholtz problem: 
\begin{equation}\label{eq:linear_problem}
    \begin{cases}
        \Delta u + \dfrac{\omega^2}{v^2}\, u = 0
        & \text{in } \mathbb{R}^3 \setminus \overline{D}, \\[0.8em]
        \Delta u + \dfrac{\omega^2}{v_i^2}\, u
         = 0
        & \text{in } D_i, \quad i \in \mathcal{I}, \\[0.8em]
        u\big|_+ = u\big|_-
        & \text{on } \partial D_i, \quad i \in \mathcal{I}, \\[0.8em]
        \delta_i \dfrac{\partial u}{\partial \nu}\bigg|_+
        = \dfrac{\partial u}{\partial \nu}\bigg|_-
        & \text{on } \partial D_i, \quad i \in \mathcal{I}, \\[0.8em]
        u \text{ satisfies an outgoing radiation condition.}
    \end{cases}
\end{equation}
Then, layer potential techniques have been used to derive a quasiperiodic capacitance matrix. Concretely, by taking the Floquet-Bloch transform of the linear problem (see \ref{eq:linear_problem}), one obtains
\begin{equation}\label{eq:linear_problemqp}
    \begin{cases}
        \Delta u + \dfrac{\omega^2}{v^2}\, u = 0
        & \text{in } Y \setminus \overline{D}, \\[0.8em]
        \Delta u + \dfrac{\omega^2}{v_i^2}\, u
         = 0
        & \text{in } D_i, \quad i \in \mathcal{I}, \\[0.8em]
        u\big|_+ = u\big|_-
        & \text{on } \partial D_i, \quad i \in \mathcal{I}, \\[0.8em]
        \delta_i \dfrac{\partial u}{\partial \nu}\bigg|_+
        = \dfrac{\partial u}{\partial \nu}\bigg|_-
        & \text{on } \partial D_i, \quad i \in \mathcal{I}, \\[0.8em]
        u \text{ is $\alpha$-quasiperiodic.}
    \end{cases}
\end{equation}
For $\alpha>0$, one can define the $\alpha$-quasiperiodic single layer potential using the Floquet-Bloch Transform of the Green function. The remaining analysis leads to the following formulation of the quasiperiodic capacitance matrix:
\begin{equation}
    \mathcal{C}^\alpha_{ij} = -\int_{\partial D_j}(\mathcal{S}_D^\alpha)^{-1}[\mathbf{1}_{D_i}]dx.
\end{equation}

Proposition \ref{prop:dtn_analytic_periodic} complements this picture by guaranteeing that the following picture commutes for resonator crystals:

\begin{figure}[!h]
\centering
\begin{tikzpicture}[
  every node/.style={align=center, font=\small},
  arr/.style={-{Stealth[length=2.5mm]}, thick},
  biarr/.style={{Stealth[length=2.5mm]}-{Stealth[length=2.5mm]}, thick}
]
  \node (UL) at (0,3) {Helmholtz system\\on $\mathbb{R}^3$};
  \node (UR) at (9,3) {Quasiperiodic\\Helmholtz system on $Y$};
  \node (LL) at (0,0) {Capacitance matrix\\on $\mathbb{Z}^d$};
  \node (LR) at (9,0) {Quasiperiodic\\capacitance matrix};

  \draw[biarr] (UL) -- node[above]{Floquet--Bloch} (UR);
  \draw[arr] (UR) -- node[sloped, above]{$\delta \to 0$} (LR);
  \draw[arr] (UL) -- node[sloped, above]{$\delta \to 0$} (LL);
  \draw[biarr] (LL) -- node[below]{Laurent symbol} (LR);
\end{tikzpicture}
\label{fig:commutative-diagram}
\end{figure}

The following result holds \cite{ammari.davies.ea2024Functional}.
\begin{theorem}
    Suppose that the fundamental cell $Y$ contains $N$ resonators, and let $\alpha>c>0$. The linear $\alpha$-quasiperiodic subwavelength resonance problem admits $N$ resonant frequencies that satisfy the asymptotic formula 
    \begin{equation}
        \omega_i^\alpha = \sqrt{\delta\lambda_i^\alpha} + \mathcal{O}(\delta^{3/2}),
    \end{equation}
    where $\lambda_i^\alpha$ is the  $i$\textsuperscript{th} eigenvalue of the $\alpha$-quasiperiodic capacitance matrix.
\end{theorem}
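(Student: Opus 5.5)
We prove the quasiperiodic statement by running the variational/DtN argument of~\cite{fepponSubwavelength2024} on the unit cell $Y$, with $\sigma=0$ and all functions $\alpha$-quasiperiodic.

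\textbf{Weak formulation.} Let $\mathcal{T}^{\omega,\alpha}$ be the exterior $\alpha$-quasiperiodic DtN operator on $\partial D\cap Y$. The resonance problem becomes: find $u\in H^1(D)$ with
\begin{equation*}
a(u,w)-\Big(\int_D u\Big)\Big(\int_D\overline{w}\Big)-\omega^2\sum_{i=1}^N\frac{1}{v_i^2}\int_{D_i}u\overline{w}\,dx-\delta\langle \mathcal{T}^{\omega/v,\alpha}u,w\rangle=0
\quad\text{for all } w\in H^1(D).
\end{equation*}
Because $\alpha>c>0$ is bounded away from $0$, the static operator $\mathcal{T}^{0,\alpha}$ is well defined: there is no $\alpha$-quasiperiodic constant solution in $Y\setminus\overline D$. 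By Proposition~\ref{prop:dtn_analytic_periodic}, $\omega\mapsto\mathcal{T}^{\omega,\alpha}$ is analytic near $\omega=0$. Hence
\begin{equation*}
\mathcal{T}^{\omega,\alpha}=\mathcal{T}^{0,\alpha}+\omega^2\mathcal{T}_2^\alpha+\mathcal{O}(\omega^4).
\end{equation*}
There is no odd term: quasiperiodic Green functions with $\alpha\neq0$ are even in $\omega$ near $0$, whereas in the free-space case an $\mathcal{O}(\omega)$ radiative term appears. This parity is what upgrades the error from $\mathcal{O}(\delta)$ to $\mathcal{O}(\delta^{3/2})$.

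\textbf{Reduction to an $N$-dimensional problem.} Since $a$ is coercive, apply Lax--Milgram. Decompose $u=\sum_i c_i\chi_{D_i}+\tilde u$, where $\chi_{D_i}$ is the indicator of $D_i$ and $\tilde u$ has zero mean on each $D_i$. Then perform a Schur-complement (Lyapunov--Schmidt) reduction. For $\omega^2=\mathcal{O}(\delta)$, the equation for $\tilde u$ is uniquely solvable with $\tilde u=\mathcal{O}(\delta)\,|c|$.

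Testing with $w=\chi_{D_j}$ gives the finite system
\begin{equation*}
\Big(\omega^2\mathbf{M}-\delta\,\mathcal{C}^\alpha+\mathcal{R}(\omega,\delta)\Big)c=0 .
\end{equation*}
Here $\mathcal{C}^\alpha_{ij}=-\langle\mathcal{T}^{0,\alpha}\mathbf 1_{D_i},\mathbf 1_{D_j}\rangle$. This coincides with the layer-potential formula $\mathcal{C}^\alpha_{ij}=-\int_{\partial D_j}(\mathcal{S}_D^\alpha)^{-1}[\mathbf 1_{D_i}]$, because $\mathcal{T}^{0,\alpha}=(-\tfrac12 I+(\mathcal{K}_D^{-\alpha})^*)(\mathcal{S}^\alpha_D)^{-1}$ and $\int_{\partial D_j}(-\tfrac12I+\mathcal{K}^*)\phi=-\int_{\partial D_j}\phi$ for densities producing no interior flux. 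The remainder satisfies $\mathcal{R}=\mathcal{O}(\delta\omega^2+\delta^2)=\mathcal{O}(\delta^2)$. Its two sources are the $\omega^2\mathcal{T}_2^\alpha$ term and the coupling through $\tilde u$; by the parity above, both are even in $\omega$.

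\textbf{Perturbation of the eigenvalues.} Set $\omega^2=\delta\mu$. Dividing by $\delta$ gives
\begin{equation*}
\det\big(\mu I-\mathbf M^{-1}\mathcal{C}^\alpha+\mathcal{O}(\delta)\big)=0,
\end{equation*}
a perturbation analytic in $(\mu,\delta)$. The matrix $\mathcal{C}^\alpha$ is Hermitian, and $\mathbf M^{-1}\mathcal C^\alpha$ is similar to the Hermitian matrix $\mathbf M^{-1/2}\mathcal C^\alpha\mathbf M^{-1/2}$. Eigenvalue perturbation for matrices similar to Hermitian ones (Weyl/Bauer--Fike), or Rouché applied to the determinant, yields $N$ roots with $\mu_i=\lambda_i^\alpha+\mathcal{O}(\delta)$, counted with multiplicity. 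Since $\lambda_i^\alpha>0$, taking square roots gives
\begin{equation*}
\omega_i^\alpha=\sqrt{\delta\mu_i}=\sqrt{\delta\lambda_i^\alpha}\,\big(1+\mathcal{O}(\delta)\big)=\sqrt{\delta\lambda_i^\alpha}+\mathcal{O}(\delta^{3/2}).
\end{equation*}
The bound holds uniformly in $\alpha>c$, provided the constants of the analytic continuation are uniform there.

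\textbf{Main obstacle.} The delicate step is the uniformity and parity of the DtN expansion. One must show that $\mathcal{T}^{\omega,\alpha}$ has no $\mathcal{O}(\omega)$ term and that its Taylor coefficients are bounded uniformly for $\alpha>c$. The plan is to write the quasiperiodic Green function as a lattice sum, or a spectral sum over $e^{i(\alpha+q)\cdot x}$ with $q$ ranging over the dual lattice. Each term is analytic in $\omega^2$, and since $|\alpha+q|\ge c$ the sum converges uniformly. This gives analyticity in $\omega^2$ and hence the $\mathcal{O}(\delta^{3/2})$ rate.
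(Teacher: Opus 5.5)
Your argument is correct in outline, but it follows a different route from the one behind the paper's statement. The paper does not prove this theorem; it quotes it from \cite{ammari.davies.ea2024Functional}. There it comes from the standard layer-potential derivation: a boundary-integral formulation built on the Floquet--Bloch transformed Green function, an expansion of $\mathcal{S}_D^{\alpha,k}$ in $k$ (which has no odd terms for $\alpha\neq0$), and a reduction onto the densities $(\mathcal{S}_D^{\alpha})^{-1}[\mathbf{1}_{D_i}]$. That is why $\mathcal{C}^\alpha$ is written with $(\mathcal{S}_D^\alpha)^{-1}$ in the statement.

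You instead run the variational DtN approach of \cite{fepponSubwavelength2024} on the unit cell and take a Schur complement onto $\mathcal{V}_d$. Your route has three advantages. The interior form contains $\omega$ only through $\omega^2$, so the only possible odd contribution sits in the exterior cell DtN map; a representation with free-space interior potentials introduces odd powers of $\omega$ that must be shown not to matter at this order. The Schur complement produces the remainder $\mathcal{O}(\delta\omega^2+\delta^2)$ transparently. And for simple $\lambda_i^\alpha$, the implicit function theorem applied to your reduced determinant even shows that $\omega_i^\alpha/\sqrt{\delta}$ is analytic in $\delta$. The layer-potential route, in exchange, lands directly on the formula for $\mathcal{C}^\alpha$ used in the statement, whereas you need an identification step.

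That identification, and two other points, need correcting.

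(i) The exterior trace is $\mathcal{T}^{0,\alpha}=(\tfrac12 I+(\mathcal{K}_D^{-\alpha})^*)(\mathcal{S}_D^\alpha)^{-1}$, not $-\tfrac12 I$. Since $\mathcal{S}_D^{\alpha,0}[\phi]$ is harmonic in $D_j$, one has $\int_{\partial D_j}(-\tfrac12 I+(\mathcal{K}_D^{-\alpha})^*)\phi=0$ for \emph{every} $\phi$. Hence $\int_{\partial D_j}\mathcal{T}^{0,\alpha}[\mathbf{1}_{D_i}]=\int_{\partial D_j}(\mathcal{S}_D^\alpha)^{-1}[\mathbf{1}_{D_i}]$. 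As you wrote them, your two identities contradict the standard jump relations and would give $\mathcal{C}^\alpha$ the opposite sign.

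(ii) The series $\sum_q e^{i(\alpha+q)\cdot x}/(k^2-\lvert\alpha+q\rvert^2)$ is not absolutely convergent in three dimensions, let alone uniformly (the Green function is singular at $x=0$). Subtract the $k=0$ series; the remaining terms are of size $k^2\lvert\alpha+q\rvert^{-4}$ and do converge. Simpler still, for a three-dimensional lattice the exterior cell problem involves $\omega$ only through $\omega^2$. It is uniquely solvable for $\omega^2$ below the first quasiperiodic exterior Dirichlet eigenvalue, which is bounded below uniformly in $\alpha$, and this gives analyticity in $\omega^2$ directly. Note that Proposition~\ref{prop:dtn_analytic_periodic} concerns the crystal DtN map, not the cell one, so it is not the right citation here.

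(iii) For a full lattice, $\alpha\neq 0$ is not what makes $\mathcal{T}^{0,\alpha}$ well defined, since zero Dirichlet data on $\partial D$ already force uniqueness. What $\alpha\neq0$ gives is that constants are not $\alpha$-quasiperiodic, so $\mathcal{C}^\alpha$ is positive definite. By continuity on the compact set $\{\lvert\alpha\rvert\geq c\}$ of the Brillouin zone, this yields $\lambda_1^\alpha\geq c'>0$ uniformly. Your square-root step $\sqrt{\delta\mu_i}=\sqrt{\delta\lambda_i^\alpha}\,(1+\mathcal{O}(\delta))$ needs exactly this uniform bound; at $\alpha=0$ one has $\lambda_1^0=0$ and the step fails.
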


\subsection{A Cascade of Equations}\label{ssec:cascade}
We seek a formal solution of \eqref{eq:nonlinear_problem} in terms of fractional power series in $\delta$:
\begin{equation}\label{eq:power_series}
    u = \sum_{n=0}^\infty \delta^{n/2} u_n \quad \text{and} \quad \omega = \sum_{n=1}^\infty \delta^{n/2} \omega_n.
\end{equation}
The frequency-dependent DtN operator may be expanded as
\begin{equation}
    \mathcal{T}^{\omega/v} = \sum_{n=0}^\infty \delta^{n/2} \mathcal{A}_n, \quad \text{where} \quad \mathcal{A}_n = \sum_{k=1}^n \frac{1}{v^k}\mathcal{T}_k \sum _{i_1 + \dots+i_k=n}\prod_{j=1}^k\omega_{i_j},
\end{equation}
and $\mathcal{A}_0 = \mathcal{T}_0$. Substituting the ansatz \eqref{eq:power_series} into the weak formulation \eqref{eq:weak_formulation} and collecting terms of order $\delta^{n/2}$, we obtain the following cascade of equations:
\begin{equation}\label{eq:cascade1}
    \begin{aligned}
        \int_{D} \nabla u_n \cdot \nabla \overline{w} \, dx 
        &- \sum_{k+l+m=n}\int_D \dfrac{\omega_k\omega_l}{v_i^2} u_m \overline{w} \, dx \\
        &- \sigma \sum_{k+l+m+r+s=n}\int_D \dfrac{\omega_k\omega_l}{v_i^2}u_m\overline{u}_r u_s\overline{w} \, dx \\
        &- \sum_{k=0}^{n-2}\langle\mathcal{A}_k u_{n-k-2},w\rangle_{\partial D} = 0.
    \end{aligned}
\end{equation}

To solve \eqref{eq:cascade1} iteratively, we decompose $H^1(D) = \mathcal{V}_{d} \oplus \mathcal{V}_0$, where $\mathcal{V}_{d}$ is the space of functions that are constant on each $D_i$, and $\mathcal{V}_0$ is the space of mean-zero fluctuations.

For $n = 0$ and $n = 1$, the sums involving $\omega$ and the DtN map in \eqref{eq:cascade1} are empty. We simply obtain:
\begin{equation}
    \int_{D} \nabla u_n\cdot \nabla \overline{w} \, dx = 0 \quad \text{for all } w \in H^1(D),
\end{equation}
which implies that $u_0, u_1 \in \mathcal{V}_{d}$. Let $u_0|_{D_i} = a_i$.

Crucially, evaluating the cascade at $n=2$ and restricting the test function $w$ to the subspace $\mathcal{V}_{d}$ annihilates the gradient term ($\nabla \overline{w} = 0$). The equation evaluates precisely to
\begin{equation}
    -\int_{D} \dfrac{\omega_1^2}{v_i^2}\left(1 + \sigma\lvert u_0\rvert^2\right)u_0\overline{w} \, dx - \langle \mathcal{T}_0 u_0, w\rangle_{\partial D} = 0. 
\end{equation}
Recognizing that $\langle \mathcal{T}_0 u_0, w \rangle_{\partial D} = -w^* \mathcal{C} \mathbf{a}$ and integrating the volume terms exactly recovers the discrete nonlinear capacitance system:

\begin{equation}\label{eq:discrete_nonlinear}
        (\mathcal{C} - \omega_1^2 \mathbf{M})\mathbf{a} - \sigma \omega_1^2 \mathbf{N}(\mathbf{a}) = 0,
\end{equation}
where $\mathbf{M} = \text{diag}(|D_i|/v_i^2)$ and $\mathbf{N}(\mathbf{a})$ contains the localised cubic interactions.

For $n \geq 2$, the hierarchy is solved recursively. Assume that $u_0,\dots, u_{n-1}$ and $\omega_1,\dots,\omega_{n}$ are known. To compute $u_n$ and $\omega_{n+1}$, we proceed in three steps. Before undertaking this endeavour, we first require some more notation. We define $F_{n}\in (H^1(D))^*$ by
\begin{equation}\label{eq:remainder}
    \begin{aligned}
        \langle F_{n}, w\rangle &=\sum_{\substack{k+l+m=n\\1\leq m\leq n-3\\1\leq k,l\leq n-2}}\int_D \dfrac{\omega_k\omega_l}{v_i^2} u_m \overline{w} \, dx 
    + \sigma \sum_{\substack{k+l+m+r+s=n\\\\1\leq m,r,s\leq n-3\\1\leq k,l\leq n -2}}\int_D \dfrac{\omega_k\omega_l}{v_i^2}u_m\overline{u}_r u_s\overline{w} \, dx\\ 
     &+ \sum_{k=1}^{n-2}\langle\mathcal{A}_k u_{n-k-2},w\rangle_{\partial D}.
    \end{aligned}
\end{equation}
Note that $F_n$ depends only on $u_0,\dots,u_{n-3}$ and $\omega_1,\dots,\omega_{n-2}$. Next, we define the $\mathbb{R}$-linear operator $\mathcal{L}_{u_0}:H^1(D)\rightarrow(H^1(D))^*$ by
\begin{equation}
    \begin{aligned}
        \langle \mathcal{L}_{u_0}(\phi), w \rangle := \sum_{i \in \mathcal{I}} \int_{D_i} &\dfrac{\omega_1^2}{v_i^2}\phi\overline{w} + 2\sigma\dfrac{\omega_1^2}{v_i^2}|u_{0,d}|^2\phi\overline{w} 
        + \sigma\dfrac{\omega_1^2}{v_i^2}u_{0,d}^2\overline{\phi}\overline{w} \,dx + \langle\mathcal{T}_0\phi,w\rangle_{\partial D}.
    \end{aligned}
\end{equation}
For $u\in H^1(D)$, we denote by $u_d$ the projection of $u$ into $\mathcal{V}_d$ and by $u_0 = u-u_d\in \mathcal{V}_0$. Finally, let $B:\mathbb{C}\rightarrow(L^2(D))^*$ be defined by
\begin{equation}
    \langle B(\alpha),w\rangle=\sum_{i\in\mathcal{I}}\int_{D_i}\frac{2\alpha\omega_1}{v_i^2}(1+\sigma\lvert u_0\rvert^2)u_0\overline{w}\,dx.
\end{equation}

We thus rewrite \eqref{eq:cascade1} in the following much more succinct form:
\begin{equation}\label{eq:cascade2}
    \langle \nabla u_ n,\nabla w\rangle-\langle \mathcal{L}_{u_0}u_{n-2},w\rangle - \langle B(\alpha),w\rangle = \langle F_{n},w\rangle.
\end{equation}

\textbf{Step 1: $u_{n,0}$:} In a first step, we determine $u_{n,0} \in \mathcal{V}_0$ by testing \eqref{eq:cascade2} at level $n$ against $w \in \mathcal{V}_0$. The strict coercivity of the Laplacian on $\mathcal{V}_0$ ensures unique invertibility.

\textbf{Step 2: $\omega_{n+1}$} To compute $\omega_{n+1}$, we test equation \eqref{eq:cascade2} at level $n+2$ with $iu_0$. Because $iu_0$ lies in the kernel of $\mathcal{L}_{u_0}$, \eqref{eq:cascade2} reduces to
\begin{equation}
    \langle B(\alpha),u_0\rangle=\langle F_{n+2},u_0\rangle,
\end{equation}
for which a unique solution exists, provided that $\lVert u_0\rVert^2 + \sigma \lVert u_0\rVert_4^4\neq 0$.

\textbf{Step 3: $u_{n,d}$} In a final step, we compute the resonator-wise constant contribution $u_{n,d}$. Testing at level $n+2$ with a test function in $\mathcal{V}_d$, we are tasked with solving
\begin{equation}
    \langle \mathcal{L}_{u_0} u_{n,d},w\rangle = \langle F_{n+2} - B(\omega_{n+1}),w\rangle.
\end{equation}
Due to our choice of $\omega_{n+1}$, we may require that $u_{n,d}\perp u_0$. Thus, we need to invert the operator $\mathcal{L}_{u_0}$ on the orthogonal complement of $u_0$ in $\mathcal{V}_d$, which we denote by $\mathcal{W}_{u_0}$. Herein lies our key assumption:
\begin{assumption}\label{as:invertability}
    The operator $\mathcal{L}_{u_0}:\mathcal{W}_{u_0}\rightarrow\mathcal{W}_{u_0}$ is invertible.
\end{assumption}
\begin{remark}
    It is a classical result of the Lyapunov-Schmidt theory \cite{Kielhofer2012-hy,ambrosetti2007nonlinear} that this assumption is satisfied for $\omega_1$ in a small neighbourhood of an isolated eigenvalue of the generalised capacitance matrix. It is known that the lowest eigenvalue of the generalised capacitance matrix is always simple \cite{feppon2022modal}, so the results of Section $3$ are not vacuous.
\end{remark}

\section{Main Results}\label{sec:mainresults}

In this section, we state our main results and contrast them with existing 
work. Our central contribution is a rigorous two-way correspondence between 
the discrete nonlinear capacitance system~\eqref{eq:discrete_nonlinear} and 
the continuous nonlinear Helmholtz problem~\eqref{eq:nonlinear_problem}: 
every discrete solution lifts to a continuous one 
(assuming a non-degeneracy condition; see Theorem~\ref{thm:approximation_convergence}), and every continuous 
solution with the natural scaling reduces to a discrete one 
(Theorem~\ref{thm:converse}). Together, these establish that the discrete 
model is faithful in the sense that no solution branch is added or missed by the 
capacitance formulation reduction.

The lifting direction extends~\cite{ammari2025analysis}, where the existence of 
continuous solutions associated with the discrete system was established 
only at first order in $\delta$. We construct the continuous solution as a 
convergent fractional power series in $\sqrt{\delta}$ to arbitrary order, with 
explicit recursive coefficients and a quantitative threshold $\delta_0$; 
analyticity in $\sqrt{\delta}$ follows immediately. The reduction direction 
has no counterpart in existing work: previous results assume \emph{a 
priori} that solutions admit a convergent expansion, whereas we prove 
that every continuous family with the bounds of~\eqref{eq:uniform_bounds} 
is asymptotically discrete.

A complementary approach in the related high-refractive-index regime is 
developed in~\cite{ammari2026dielectric}, where existence is obtained via 
Lyapunov-Schmidt reduction. While this approach can also be shown to work in the present setting, the benefit of the more involved series approach is that it also yields an algorithm to compute higher order corrections.

\begin{theorem}[Discrete-to-continuous lifting]\label{thm:approximation_convergence}
    Let $(\mathbf{a}, \omega_1) \in \mathbb{R}^N \times \mathbb{R}$,
    with $\omega_1 \neq 0$, be a solution of the discrete nonlinear
    capacitance system~\eqref{eq:discrete_nonlinear}, and suppose that
    Assumption~\ref{as:invertability} holds.
    Then, there exists $\delta_0 > 0$ such that, for every
    $\delta < \delta_0$, problem~\eqref{eq:nonlinear_problem} admits
    a nontrivial solution $(u, \omega) \in H^1(D) \times \mathbb{C}$
    given by the convergent expansions
    \begin{equation}
        u = \sum_{n=0}^{\infty} \delta^{n/2} u_n,
        \qquad
        \omega = \sum_{n=1}^{\infty} \delta^{n/2} \omega_n,
    \end{equation}
    where $u_0\mid_{D_i}=a_i$. In particular, the pair $(u,\omega)$ is analytic in $\sqrt{\delta}$.
\end{theorem}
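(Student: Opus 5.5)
We build the coefficients $(u_n,\omega_{n+1})$ recursively through Steps 1--3 of Section~\ref{ssec:cascade}, then prove geometric growth bounds
\[
\lVert u_n\rVert_{H^1(D)}\le C\rho^n,\qquad \lvert\omega_n\rvert\le C\rho^n,
\]
by a majorant (Cauchy-type) argument. Convergence of the series for $\delta^{1/2}<1/\rho$ then follows. Analyticity in $\sqrt{\delta}$ is immediate from the convergent power series, and a final verification shows the summed pair solves the weak formulation~\eqref{eq:weak_formulation}, hence~\eqref{eq:nonlinear_problem}.

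\textbf{Well-posedness of the recursion.} Set $u_0=\sum_i a_i\mathbf{1}_{D_i}$ and take $\omega_1$ from~\eqref{eq:discrete_nonlinear}. The level-$2$ equation tested on $\mathcal{V}_d$ is exactly the discrete system, so it is satisfied. Assume $u_0,\dots,u_{n-1}$ and $\omega_1,\dots,\omega_n$ are known.
\begin{enumerate}
\item Solve for $u_{n,0}\in\mathcal{V}_0$ using coercivity of $\int\nabla\cdot\nabla\overline{w}$ on $\mathcal{V}_0$ (Poincar\'e--Wirtinger). This gives $\lVert u_{n,0}\rVert\le C_P\lVert\text{RHS}\rVert$.
\item Obtain $\omega_{n+1}$ by testing level $n+2$ against $iu_0$. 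Since $\mathbf{a}$ is real, $iu_0\in\ker\mathcal{L}_{u_0}$, and because $\mathcal{L}_{u_0}$ is self-adjoint in the real sense the $\mathcal{L}$-term drops out. The scalar equation is then uniquely solvable provided $\lVert u_0\rVert^2+\sigma\lVert u_0\rVert_4^4\neq0$. I would check this holds automatically: testing~\eqref{eq:discrete_nonlinear} against $\mathbf{a}$ gives $\omega_1^2(\mathbf{a}^\top\mathbf{M}\mathbf{a}+\sigma\,\mathbf{a}^\top\mathbf{N}(\mathbf{a}))=\mathbf{a}^\top\mathcal{C}\mathbf{a}>0$ by Lemma~\ref{lem:cap_properties}.
\item Obtain $u_{n,d}\in\mathcal{W}_{u_0}$ via Assumption~\ref{as:invertability}, with $\lVert\mathcal{L}_{u_0}^{-1}\rVert=:K$.
\end{enumerate}
Collecting these, there is a constant $A$ depending only on $u_0$, $\omega_1$, $K$, $C_P$ and $\sigma$ such that
\[
\lVert u_n\rVert+\lvert\omega_{n+1}\rvert\le A\bigl(\lVert F_n\rVert+\lVert F_{n+2}\rVert+\text{lower-order }\mathcal{L}\text{ terms}\bigr).
\]

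\textbf{Bounding $F_n$ (main obstacle).} $F_n$ consists of triple and quintuple convolution sums in $(\omega_k)$ and $(u_m)$, together with the DtN terms $\mathcal{A}_k$. By Proposition~\ref{prop:dtn_analytic} and Proposition~\ref{prop:dtn_analytic_periodic}, $\lVert\mathcal{T}_k\rVert\le C_T R^{-k}$. For the cubic term, the Sobolev embedding $H^1(D)\hookrightarrow L^6(D)$ in three dimensions bounds $\int u_m\overline{u}_r u_s\overline{w}$ by $C\lVert u_m\rVert\lVert u_r\rVert\lVert u_s\rVert\lVert w\rVert$.

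Naively inserting $C\rho^n$ produces a factor growing polynomially in the number of compositions of $n$, so I would use the Catalan-type majorant instead. The inductive hypothesis is
\[
\lVert u_n\rVert,\ \lvert\omega_n\rvert\le \frac{c\,\rho^n}{(n+1)^2},
\]
combined with the concentration estimate
\[
\sum_{k+l=n}\frac{1}{(k+1)^2(l+1)^2}\le\frac{C_0}{(n+1)^2}.
\]
Iterating this estimate handles sums of $p\le5$ factors, giving $\lVert F_n\rVert\le C_0^4\,\mathrm{poly}(c,\sigma,\omega_1)\,\rho^{n-2}/(n+1)^2$, up to an extra factor $\rho^{-1}$ or $\rho^{-2}$ from the index shifts. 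Equivalently, one can phrase this as a majorant series $U(z)=\sum\lVert u_n\rVert z^n$ dominated by the solution of an analytic scalar equation of the form $U=\lVert u_0\rVert+\lVert u_1\rVert z+Az^2\Phi(U,W,z)$. The implicit function theorem makes such a solution analytic near $z=0$, and this yields the geometric bound directly.

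\textbf{Closing the induction and verification.} The key point is that $F_{n+2}$, which determines $u_n$ and $\omega_{n+1}$, involves only indices at most $n-1$ for $u$ and at most $n$ for $\omega$, always shifted by at least one power of $z$. Choosing $\rho$ large relative to $A$, $C_0$, $c$ and $R^{-1}$ therefore closes the induction. Finally, for $\sqrt{\delta}<\min(\rho^{-1},R)$ all series converge in $H^1$ and $\mathbb{C}$. The weak form~\eqref{eq:weak_formulation} is analytic in $(u,\omega)$, so its residual is an analytic function of $\sqrt{\delta}$ whose Taylor coefficients vanish by construction of the cascade~\eqref{eq:cascade1}. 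Hence the residual vanishes identically, and $\delta_0=\min(\rho^{-2},R^2)$. Nontriviality follows from $u_0\ne0$ for small $\delta$.
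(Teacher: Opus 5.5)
\textbf{Genuine gap: the induction does not close.} Your recursion (Steps 1--3) and the final verification are fine. The identity $\omega_1^2(\mathbf{a}^\top\mathbf{M}\mathbf{a}+\sigma\,\mathbf{a}^\top\mathbf{N}(\mathbf{a}))=\mathbf{a}^\top\mathcal{C}\mathbf{a}>0$ (for $\mathbf{a}\neq0$) is also a nice way to see that the $\omega_{n+1}$-equation is solvable.

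The problem is the claim that every contribution to $F_{n+2}$ is ``shifted by at least one power of $z$'', and with it the bound $\lVert F_n\rVert\lesssim\rho^{n-2}$. This is false. Only the DtN terms $\mathcal{A}_k u_{n-2-k}$ have index sum $n-2$. The volume terms $\omega_k\omega_l u_m$ and $\omega_k\omega_l u_m\overline{u}_r u_s$ in $F_n$ have index sum exactly $n$.

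Under your hypothesis $\lvert\omega_k\rvert,\lVert u_k\rVert\le c\rho^k/(k+1)^2$, this breaks the induction. The single term $\omega_2\omega_n u_0\in F_{n+2}$ has size $c^2\lVert u_0\rVert\rho^{n+2}/(9(n+1)^2)$. That is of order $c\lVert u_0\rVert\rho$ times the target $c\rho^{n+1}/(n+2)^2$ for $\omega_{n+1}$. Likewise $\omega_1\omega_2u_{n-1}$ is of order $\lvert\omega_1\rvert c\rho$ times the target for $u_{n,d}$. So enlarging $\rho$ makes matters worse.

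Switching to the shifted weight $\rho^{k-1}$ for $\omega_k$ (the paper's $e^{\alpha(k-1)}$) does not help. The terms $\omega_1\omega_l u_m$ ($l+m=n+1$, $l\ge2$, $m\ge1$), $\omega_k\omega_l u_0$ ($k+l=n+2$, $k,l\ge2$) and their cubic analogues are then of exactly the same exponential order as the unknowns $(u_{n,d},\omega_{n+1})$. The only genuinely shifted contributions are the DtN terms and $\mathcal{L}_{u_0}u_{n-2}$ in the $\mathcal{V}_0$-equation.

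For the same reason, your majorant equation $U=\lVert u_0\rVert+\lVert u_1\rVert z+Az^2\Phi(U,W,z)$ is valid only for the $\mathcal{V}_0$-component. The quantities $u_{n,d}$ and $\omega_{n+1}$ are determined at level $n+2$ and carry no factor $z^2$.

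\textbf{What is missing} is a source of smallness for these same-order convolution terms. The paper obtains it from the concentration estimates of Lemmas~\ref{lem:bound1} and~\ref{lem:bound2}. With polynomial weight $(n+1)^{-p}$, the simplex sums with the corners removed are bounded by $C_p(n+1)^{-p}$ with $C_p\to0$ as $p\to\infty$. The removed corners are exactly the terms $\mathcal{L}_{u_0}u_n$ and $B(\omega_{n+1})$, which are inverted. The exponential weight is used only for the shifted terms (Lemmas~\ref{lem:bound3} and~\ref{lem:bound4}). Fixing $p=2$ gives up exactly this mechanism.

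\textbf{A possible repair of your fixed-$p$ route.}
\begin{enumerate}
\item Leave $(u_0,\omega_1)$ out of the majorant.
\item Require $\lVert u_n\rVert\le\epsilon\rho^n/(n+1)^2$ for $n\ge1$ and $\lvert\omega_n\rvert\le\epsilon\rho^{n-1}/n^2$ for $n\ge2$.
\item Observe that, after removing $\mathcal{L}_{u_0}$ and $B$, every unshifted term is at least quadratic in the corrections. Each such term is therefore $O(\epsilon)$ relative to its target.
\item Fix $\epsilon$ small first, then take $\rho$ large to handle the shifted terms and the finitely many base cases.
\end{enumerate}
This is Cauchy's majorant proof of the analytic implicit function theorem. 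It needs the two-parameter choice and the quadratic structure; choosing $\rho$ large alone does not suffice.
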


\begin{theorem}[Continuous-to-discrete reduction]\label{thm:converse}
    Let $\{(\omega^\delta, u^\delta)\}_{\delta > 0} \subset \mathbb{C} \times H^1(D)$
    be a family of nontrivial solutions to~\eqref{eq:nonlinear_problem}
    satisfying the uniform bounds
    \begin{equation}\label{eq:uniform_bounds}
        \omega^\delta \sim \delta^{1/2},
        \qquad
        \lVert u^\delta \rVert_{L^2(D)}+ \lVert u^\delta \rVert_{L^4(D)} = \mathcal{O}(1),
    \end{equation}
    as $\delta \to 0$. Write $\omega^\delta = \delta^{1/2}\omega_1^\delta + o(\delta^{1/2})$
    and let $u_0^\delta := \Pi_d \, u^\delta \in \mathcal{V}_d$ denote the
    projection onto the space of resonator-wise constant functions.

    Then, after passing to a subsequence, if necessary, there exist
    $\omega_1 \in \mathbb{C} \setminus$ and
    $\mathbf{a} = (a_1, \dots, a_N)^\top \in \mathbb{C}^N \setminus$
    such that
    \begin{equation}
        \omega_1^\delta \to \omega_1,
        \qquad
        u_0^\delta|_{D_i} \to a_i,
    \end{equation}
    as $\delta \to 0$, and the limit $(\mathbf{a}, \omega_1)$ solves the
    discrete nonlinear capacitance
    system~\eqref{eq:discrete_nonlinear}. Moreover, if $\omega_1^2\notin\sigma(\mathbf{M}^{-1}\mathcal{C})$, the limit $\mathbf{a}$ is nonzero.
\end{theorem}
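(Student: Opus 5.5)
We prove Theorem~\ref{thm:converse} by a compactness argument applied to the weak formulation~\eqref{eq:weak_formulation}, rescaled by $\delta^{-1}$. The key is to first show that the fluctuation part $u^\delta-\Pi_d u^\delta$ is small in $H^1(D)$, and then to test with resonator-wise constants so that the gradient term drops out.

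\textbf{Step 1: bounds and extraction of limits.} Since $\omega^\delta\sim\delta^{1/2}$, the rescaled sequence $\omega_1^\delta:=\delta^{-1/2}\omega^\delta$ is bounded in $\mathbb{C}$, so a subsequence converges to some $\omega_1$. The bounds~\eqref{eq:uniform_bounds} give $\lvert a_i^\delta\rvert \le \lvert D_i\rvert^{-1/2}\lVert u^\delta\rVert_{L^2}$, where $a_i^\delta := u_0^\delta|_{D_i}$ is the mean of $u^\delta$ on $D_i$. The vectors $\mathbf{a}^\delta$ therefore lie in a bounded subset of $\mathbb{C}^N$, and a further subsequence converges to some $\mathbf{a}$.

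\textbf{Step 2: smallness of the fluctuation.} Set $\phi^\delta:=u^\delta-u_0^\delta\in\mathcal{V}_0$ and test~\eqref{eq:weak_formulation} with $w=\phi^\delta$. The gradient term becomes $\lVert\nabla\phi^\delta\rVert^2$, since $\nabla u_0^\delta=0$.
\begin{itemize}
\item The volume terms are bounded by $C\lvert\omega^\delta\rvert^2\bigl(\lVert u^\delta\rVert_{L^2}+\lVert u^\delta\rVert_{L^4}^3\bigr)\lVert\phi^\delta\rVert_{L^4}$. By Poincar\'e--Wirtinger and the Sobolev embedding $H^1\hookrightarrow L^4$ in three dimensions, $\lVert\phi^\delta\rVert_{L^4}\le C\lVert\nabla\phi^\delta\rVert$. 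This is the step where the $L^4$ bound in~\eqref{eq:uniform_bounds} is needed.
\item The boundary term is bounded by $C\delta\lVert u^\delta\rVert_{H^1}\lVert\phi^\delta\rVert_{H^1}$. Here we use that $\mathcal{T}^{\omega/v}$ is uniformly bounded $H^{1/2}\to H^{-1/2}$ for small $\omega$, which follows from its analyticity near $0$ (Proposition~\ref{prop:dtn_analytic}, since $0$ is not a Dirichlet resonance), together with the trace theorem.
\end{itemize}
Since $\lVert u^\delta\rVert_{H^1}\le C(1+\lVert\nabla\phi^\delta\rVert)$, absorbing terms yields $\lVert\nabla\phi^\delta\rVert=\mathcal{O}(\delta)$. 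In particular $\phi^\delta\to0$ in $H^1(D)$ and in $L^4(D)$.

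This is the step I expect to be the main obstacle. One has to check that the nonlinear term can indeed be absorbed. That works because it carries the small prefactor $\lvert\omega^\delta\rvert^2=\mathcal{O}(\delta)$ and $u^\delta$ is already bounded in $L^4$. One also has to check that the DtN map is uniformly bounded along the sequence.

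\textbf{Step 3: passage to the limit.} Test~\eqref{eq:weak_formulation} with $w\in\mathcal{V}_d$; the gradient term then vanishes. Dividing by $\delta$ gives
\[
-\sum_i\int_{D_i}\frac{(\omega_1^\delta)^2}{v_i^2}\bigl(1+\sigma\lvert u^\delta\rvert^2\bigr)u^\delta\overline{w}\,dx-\langle\mathcal{T}^{\omega^\delta/v}u^\delta,w\rangle_{\partial D}=0 .
\]
We pass to the limit term by term:
\begin{itemize}
\item $u^\delta\to u_0$ in $H^1$ and $L^4$, where $u_0|_{D_i}=a_i$, using Steps 1 and 2;
\item the cubic term converges by H\"older's inequality;
\item $\mathcal{T}^{\omega^\delta/v}\to\mathcal{T}_0$ in operator norm by analyticity, and the trace of $u^\delta$ converges in $H^{1/2}$.
\end{itemize}
Using $\langle\mathcal{T}_0u_0,w\rangle=-w^*\mathcal{C}\mathbf{a}$, as in Section~\ref{ssec:cascade}, the limit is exactly~\eqref{eq:discrete_nonlinear}.

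\textbf{Step 4: nontriviality of $\mathbf{a}$.} Suppose $\mathbf{a}=0$. Normalize $\hat u^\delta:=u^\delta/\lVert u^\delta\rVert_{L^2}$. Repeating Steps 2 and 3 for this linearly scaled equation, the cubic term gets the factor $\lVert u^\delta\rVert_{L^2}^2\lvert\hat u^\delta\rvert^2$. Its contribution vanishes because $u^\delta\to0$ in $L^4$: indeed $\lVert\phi^\delta\rVert_{H^1}\to0$ and $\mathbf{a}^\delta\to0$. The limit $\hat{\mathbf{a}}$ is nonzero, since $\lVert\hat u^\delta\rVert_{L^2}=1$ and the fluctuation part vanishes. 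It satisfies $(\mathcal{C}-\omega_1^2\mathbf{M})\hat{\mathbf{a}}=0$, so $\omega_1^2\in\sigma(\mathbf{M}^{-1}\mathcal{C})$, contradicting the hypothesis.

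There is one point to handle in Step 4. The nonlinear bound in Step 2 must now be applied with the scale factor, that is, to the estimate $\lVert\nabla\hat\phi^\delta\rVert\lesssim\delta\bigl(1+\lVert u^\delta\rVert_{L^4}^2\lVert\hat u^\delta\rVert_{L^4}^3/\lVert\hat u^\delta\rVert_{L^4}\bigr)$. I would manage this with the same Sobolev absorption argument as in Step 2.
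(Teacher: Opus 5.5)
Your argument is correct for finite $N$, which is what the statement (with $\mathbf a\in\mathbb C^N$) asserts. Its skeleton is the paper's: control the mean-zero part, test with resonator-wise constants and divide by $\delta$, then divide out the amplitude to reach the linear eigenproblem. The execution differs in two places.

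First, in Lemma~\ref{lem:gradientbound} the paper tests with $w=u^\delta$ itself. It discards the boundary term by asserting that $\operatorname{Re}\langle\mathcal T^{\omega^\delta/v}\cdot,\cdot\rangle_{\partial D}$ stays nonpositive, and obtains only $\lVert\nabla u^\delta\rVert_{L^2}=\mathcal O(\delta^{1/2})$. You test with $\phi^\delta$ and absorb the $\mathcal O(\delta)$ boundary term. This needs no sign information on the DtN form or on $\sigma$, and gives the sharper rate $\lVert\nabla\phi^\delta\rVert_{L^2}=\mathcal O(\delta)$, consistent with the cascade where $u_0,u_1\in\mathcal V_d$.

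Second, the paper proves nontriviality quantitatively. Lemma~\ref{lem:approximate_discrete} gives an explicit $\mathcal O(\delta^{1/2})$ residual for the discrete system. Lemma~\ref{lem:normequivalence} bounds $\lVert u^\delta\rVert_{L^2}^2+\lVert u^\delta\rVert_{L^4}^4$ by powers of $\lVert\mathbf a^\delta\rVert_{\ell^2}$. A distance-to-spectrum lower bound for $\mathcal C-(\omega_1^\delta)^2\mathbf M$ is then divided by $\lVert\mathbf a^\delta\rVert_{\ell^2}$. Your normalisation $\hat u^\delta=u^\delta/\lVert u^\delta\rVert_{L^2}$ with a relative fluctuation estimate replaces both the norm equivalence and the spectral bound, and is shorter. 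The paper's version buys explicit information, namely $\operatorname{dist}((\omega_1^\delta)^2,\sigma(\mathbf M^{-1}\mathcal C))\lesssim\lVert\mathbf a^\delta\rVert_{\ell^\infty}^2+\delta^{1/2}$, so small-amplitude families must lie near the linear spectrum.

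The paper also sketches the infinite periodic case by translating the maximiser of $\lvert a_i^\delta\rvert$ into a fixed cell. Your compactness in Steps~1 and~4 would need that modification there.

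One point in Step~4 needs fixing. The estimate you display is quadratic in $\lVert\hat u^\delta\rVert_{L^4}\le C(1+\lVert\nabla\hat\phi^\delta\rVert_{L^2})$, and a quadratic term cannot be absorbed. You have no a priori bound on $\lVert\nabla\hat\phi^\delta\rVert_{L^2}=\lVert\nabla\phi^\delta\rVert_{L^2}/\lVert u^\delta\rVert_{L^2}$, since $\lVert u^\delta\rVert_{L^2}$ may be much smaller than $\delta$. Instead, write $\lVert u^\delta\rVert_{L^2}^2\lvert\hat u^\delta\rvert^2\hat u^\delta=\lvert u^\delta\rvert^2\hat u^\delta$ and use
\[
\int_D\lvert u^\delta\rvert^2\lvert\hat u^\delta\rvert\lvert\hat\phi^\delta\rvert\,dx\le\lVert u^\delta\rVert_{L^4}^2\lVert\hat u^\delta\rVert_{L^4}\lVert\hat\phi^\delta\rVert_{L^4}.
\]
The coefficient $\lVert u^\delta\rVert_{L^4}^2$ is bounded by hypothesis and the bound is linear in $\hat u^\delta$. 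Hence $\lVert\nabla\hat\phi^\delta\rVert_{L^2}\le C\delta(1+\lVert\nabla\hat\phi^\delta\rVert_{L^2})$, and the absorption goes through exactly as in Step~2. The same inequality with $w\in\mathcal V_d$ in place of $\hat\phi^\delta$ shows that the cubic term vanishes in the limit when $\mathbf a=0$, because $u^\delta\to0$ in $L^4$.
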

\section{Proof of Theorem \ref{thm:approximation_convergence}}\label{sec:proof1}
In this section, we show that the iteratively defined sequence of Subsection \ref{ssec:cascade} converges for $\delta$ sufficiently small, which finalises the proof of Theorem \ref{thm:approximation_convergence}. The main obstacle is controlling the convolution sums that arise at each step of the iteration due to the coupling of frequency to the solutions and the nonlinearity.

Our proof strategy is inspired by \cite{sacchetti2023perturbation}; however, a key difference lies in the treatment of certain combinatorial sums. As the decay exponent
$p$ in the bound of Theorem \ref{lem:bound1} increases, the weight of such sums concentrates at the corners of the simplex $\{(i_1, \dots, i_l) : i_j \geq 1, \, \sum i_j = k\}$, where a single index carries most of the total. Exploiting this concentration yields constants that are bounded independently of
$k$, allowing us to close the induction with an explicit threshold on $\delta$.

\begin{lemma}\label{lem:bound1}
    Assume that there exist $\mu, \nu, \alpha > 0$ such that for some positive integer $p$
    the following estimates hold for $k = 0, \dots, n$:
    \begin{equation}
        \lvert \omega_k \rvert \leq \frac{\mu \exp(\alpha(k-1))}{k^p},
        \qquad
        \lVert u_k \rVert_{L^2} \leq \frac{\nu \exp(\alpha k)}{(k+1)^p},
        \qquad
        \lVert \nabla u_k \rVert_{L^2} \leq \frac{\nu \exp(\alpha k)}{(k+1)^p}.
    \end{equation}
    Then, there exists $C_p > 0$, independent of $n$, such that
    \begin{equation}
        \Bigl\lVert \sum_{\substack{k+l+m = n+2, \\ m \neq n, \\ k, l \neq n+1}}
        \frac{\omega_k \omega_l}{v_i^2} u_m \Bigr\rVert_{H^1}
        \leq \frac{C_p \, v_{\min}^{-2} \mu^2 \nu \exp(\alpha n)}{(n+1)^p}.
    \end{equation}
    Moreover, $C_p \to 0$ as $p \to \infty$.
\end{lemma}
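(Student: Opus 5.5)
The plan is a direct term-by-term estimate followed by a discrete convolution bound that is uniform in $n$. First, since $v_i\ge v_{\min}$ for every $i$, the coefficient $1/v_i^2$ is piecewise constant and bounded by $v_{\min}^{-2}$. Multiplying $u_m$ by such a constant (constant on each $D_i$) does not increase either $\lVert u_m\rVert_{L^2}$ or $\lVert\nabla u_m\rVert_{L^2}$ beyond this factor. The triangle inequality together with the hypotheses then gives
\begin{equation*}
\Bigl\lVert \sum \frac{\omega_k\omega_l}{v_i^2}u_m\Bigr\rVert_{H^1}
\le \sqrt{2}\, v_{\min}^{-2}\mu^2\nu \sum \frac{e^{\alpha(k-1)}e^{\alpha(l-1)}e^{\alpha m}}{k^p\, l^p\,(m+1)^p}.
\end{equation*}
Recall that the sum in the frequency expansion starts at index $1$, so $k,l\ge1$ and $m\ge 0$. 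On the constraint set $k+l+m=n+2$ the exponentials combine to $e^{\alpha(k+l+m-2)}=e^{\alpha n}$, which factors out exactly. This is why the exponents in the hypotheses are shifted as they are. It remains to show that
\begin{equation*}
S_n:=\sum_{\substack{k+l+m=n+2,\ k,l\ge1,\ m\ge0\\ m\ne n,\ k,l\ne n+1}} \frac{1}{k^p l^p (m+1)^p}\le \frac{C_p}{(n+1)^p},
\end{equation*}
with $C_p$ independent of $n$ and $C_p\to0$ as $p\to\infty$.

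To prove this, I would substitute $m'=m+1\ge1$, so the constraint becomes $k+l+m'=n+3$ with all three indices at least $1$. The excluded cases are exactly the corners where one index carries almost all of the total: $m'=n+1$ forces $k=l=1$, and $k=n+1$ forces $l=1$ and $m'=1$ (similarly for $l$). These are precisely the terms that would otherwise contribute $O(1)$ and spoil both the decay rate and the smallness of the constant. After the exclusions, every remaining triple has its largest index at most $n$.

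The core estimate is then the standard inequality
\begin{equation*}
\frac{1}{k^p\,l^p\,m'^p}\le \frac{3^p}{(n+3)^p}\Bigl(\frac{1}{(\min\text{-pair})^p}\Bigr),
\end{equation*}
where I use that some index is at least $(n+3)/3$ and bound the remaining two factors by their product. This alone gives a constant $C_p$ bounded uniformly in $n$, but it does not give $C_p\to0$. For the smallness I would split into two regions.

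\emph{Near-corner region.} Here the largest index is $j\ge (n+3)/2$. The other two indices sum to $s=n+3-j\ge 3$, which is guaranteed because the pure corners $s=2$ were removed. The contribution is at most
\begin{equation*}
3\sum_{s\ge3}\frac{(s-1)}{2^{p}}\cdot\frac{1}{(n+3-s)^p}\cdot(\text{small}),
\end{equation*}
since each pair with $a+b=s\ge3$ has $a^p b^p\ge 2^p$. I would compare $(n+3-s)^{-p}$ with $(n+1)^{-p}$: for $s$ small this ratio is bounded, while for larger $s$ the factor $(ab)^{-p}$ decays geometrically in $s$. Together these yield a bound of order $2^{-p}\cdot\mathrm{const}$ times $(n+1)^{-p}$.

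\emph{Bulk region.} Here all indices are below $(n+3)/2$, so at least two of them are at least $(n+3)/4$. Each such term is then at most $4^{2p}(n+3)^{-2p}$. Summing over the $O(n^2)$ terms gives $O\bigl(16^p n^{2-2p}\bigr)$, which needs to be compared with $(n+1)^{-p}$.

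The main obstacle is this bulk region together with small $n$: the factor $16^p$ does not vanish as $p\to\infty$. I would try first to refine the split so that the near-corner region covers every triple where one index is at least $n+3-K$ for a fixed $K$. The complement then has at least two indices $\ge 2$, producing a factor like $2^{-p}$ relative to the leading term. For small $n$, where the bulk/corner distinction degenerates, I would check directly that each surviving term is at most $2^{-p}\cdot 3^p/(n+3)^p$ and sum the finitely many terms. Whatever the final bookkeeping, the quantity to target is $C_p=O\bigl((2/3)^{p}\,\mathrm{poly}(p)\bigr)$ or better.
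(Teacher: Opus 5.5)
\textbf{Gap: the estimate of $S_n$ does not close uniformly in $n$.} Your reduction to $S_n$ is correct and matches the paper: the exponentials collapse to $e^{\alpha n}$, you shift $m'=m+1$, and the exclusions remove exactly the triples containing an index $n+1$. The trouble is that you only use $ab\ge 2$ for the two non-maximal indices $a+b=s$. With that bound, none of your regions gives a constant that is both uniform in $n$ and tends to zero.
\begin{itemize}
\item \emph{Near-corner region.} The ratio to $(n+1)^{-p}$ is only bounded by $\bigl(\frac{n+1}{2(n+3-s)}\bigr)^p$. This is close to $1$ when $s\approx (n+3)/2$ and $n$ is large, and there are about $n/2$ such pairs.
\item \emph{Bulk region.} The bound is, relative to $(n+1)^{-p}$, roughly $(n+3)^2\bigl(\frac{16}{n+3}\bigr)^p$. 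This is small only once $n+3$ is well above $16$.
\item \emph{Small-$n$ fallback.} Bounding each surviving term by $2^{-p}3^p(n+3)^{-p}$ gives the ratio $\bigl(\frac{3(n+1)}{2(n+3)}\bigr)^p$. This equals $1$ at $n=3$ and diverges for $n\ge 4$.
\end{itemize}
So for every $n$ from $3$ up to the bulk threshold, nothing in the proposal yields $C_p\to0$. Your target rate is also unattainable. For $n=2$ the surviving triples are exactly the permutations of $(1,2,2)$, so $(n+1)^pS_2=3(3/4)^p$.

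\textbf{The missing ingredient} is the sharper bound $ab\ge s-1$, valid whenever $a+b=s$ with $a,b\ge1$. Write the largest index as $n+3-s$. It is at least $(n+3)/3$ and at most $n$, so $3\le s\le \frac23(n+3)$, and also $s\le n+1$ for $n\ge2$. Each surviving term is then at most $(s-1)^{-p}(n+3-s)^{-p}$, there are at most $s-1$ ordered pairs for each $s$, and
\[
(s-1)(n+3-s)-2n=(s-3)(n+1-s)\ge 0,\qquad (s-1)(n+3-s)\ge \tfrac13 (s-1)(n+3).
\]
Hence $\frac{n+1}{(s-1)(n+3-s)}\le \min\bigl\{\frac{n+1}{2n},\frac{3}{s-1}\bigr\}$. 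Interpolating with exponents $p-q$ and $q>2$ gives
\[
(n+1)^p S_n\le 3\Bigl(\frac{n+1}{2n}\Bigr)^{p-q}3^q\bigl(\zeta(q-1)-1\bigr).
\]
This is uniform in $n$ and tends to $0$ as $p\to\infty$, since $\frac{n+1}{2n}\le\frac34$ for $n\ge2$ and the sum is empty for $n\le1$. This is essentially the paper's argument. A single region (largest index at least $(n+3)/3$) covers everything, so no separate bulk or small-$n$ analysis is needed.
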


\begin{proof}
    Applying our assumptions, we find that
    \begin{equation}
        \begin{aligned}
            \Bigl\lVert \sum_{\substack{k+l+m = n+2, \\ m \neq n, \\ k, l \neq n+1}}
            \frac{\omega_k \omega_l}{v_i^2} u_m \Bigr\rVert_{H^1}
            &\leq v_{\min}^{-2} \mu^2 \nu \exp(\alpha n)
              \sum_{\substack{k+l+m = n+2, \\ 0 \leq m \leq n-1, \\ 1 \leq k, l \leq n}}
              k^{-p} l^{-p} (m+1)^{-p} \\
            &= v_{\min}^{-2} \mu^2 \nu \exp(\alpha n)
              \sum_{\substack{k+l+m = n+3, \\ 1 \leq m \leq n, \\ 1 \leq k, l \leq n}}
              k^{-p} l^{-p} m^{-p}.
        \end{aligned}
    \end{equation}
    We are therefore interested in bounding
    \begin{equation}
        \begin{aligned}
            S_n^{(1)} &:= \sum_{\substack{k+l+m = n+3, \\ 1 \leq m \leq n, \\ 1 \leq k, l \leq n}}
              k^{-p} l^{-p} m^{-p} .\\
        \end{aligned}
    \end{equation}
    Due to symmetry of the indices, we may assume that $k\leq l \leq m$ and multiply the sum by three to obtain
    \begin{equation}
        \begin{aligned}
           S_n^{(1)} &\leq 3 \sum_{\substack{k+l+m = n+3, \\ 1 \leq m \leq l \leq k \leq n}}
              k^{-p} l^{-p} m^{-p} \\
            &\leq 3 \sum_{m=3}^{\frac{2n}{3}+2}  (n+3-m)^{-p}\sum_{k=1}^{m-1}
              k^{-p} (m-k)^{-p}.
        \end{aligned}
    \end{equation}
    We begin by bounding the inner sum. To this end, we note that $k^{-p} (m-k)^{-p}$ is maximised at $(m-1)^{-p}$. This yields
    \begin{equation}
        \begin{aligned}
            S_n^{(1)} &\leq 3 \sum_{m=3}^{\frac{2n}{3}+2} \, (m-1)^{-p+1} (n+3-m)^{-p} \\
            &\leq 3 (n+1)^{-p} \sum_{m=3}^{\frac{2n}{3}+2}
            (m-1)\left( \frac{n+1}{(m-1)(n+3-m)} \right)^{p}.
        \end{aligned}
    \end{equation}
    The fraction admits two different upper bounds. On the one hand, similar to the previous argument,
    \begin{equation}
        \frac{n+1}{(m-1)(n+3-m)} \leq \frac{n+1}{2n},
    \end{equation}
    and on the other hand, using that $m \leq \frac{2n}{3}+2$,
    \begin{equation}
        \frac{n+1}{(m-1)(n+3-m)} \leq \frac{3}{(m-1)}.
    \end{equation}
    Hence, for any $1 < q < p$,
    \begin{equation}
        \begin{aligned}
            S_n^{(1)} &\leq 3 (n+1)^{-p} \sum_{m=3}^{\frac{2n}{3}+1}(m-1)
              \left( \frac{n+1}{2n} \right)^{p-q} \left( \frac{3}{m-1} \right)^{q} \\
            &\leq 3 (n+1)^{-p} \left(\frac{n+1}{2n}\right)^{p-q} \sum_{m=2}^{\infty} m\left( \frac{3}{m} \right)^{q}.
        \end{aligned}
    \end{equation}
    For $q > 2$, the sum converges and hence
    \begin{equation}
        S_n^{(1)} \leq \left(\frac{n+1}{2n}\right)^{p-q}\frac{9 \cdot 3^q (\zeta(q-1) - 1)}{ \, (n+1)^p},
    \end{equation}
    where $\zeta(\cdot)$ denotes the Riemann zeta function. The claim follows.
\end{proof}

\begin{lemma}\label{lem:bound2}
    Under the same assumptions as in Lemma~\ref{lem:bound1}, there exists some $C_p' > 0$,
    again independent of $n$, such that
    \begin{equation}
        \Bigl\lVert \sum_{\substack{k+l+m+r+s = n+2, \\ m, r, s \neq n, \\ k, l \neq n+1}}
        \int_D \frac{\omega_k \omega_l}{v_i^2} u_m \overline{u}_r u_s \overline{w} \, dx
        \Bigr\rVert_{L^2}
        \leq \frac{C_p' \, v_{\min}^{-2} \mu^2 \nu^3 \exp(\alpha(n-2))}{(n+1)^p}.
    \end{equation}
    Moreover, $C_p'\rightarrow0$, as $p$ tends to infinity.
\end{lemma}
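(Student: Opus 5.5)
The plan mirrors the proof of Lemma~\ref{lem:bound1}, with the quintuple convolution handled by reducing it to nested three-fold sums. I will read the left-hand side as the $L^2$ norm of the function $\sum \frac{\omega_k\omega_l}{v_i^2}u_m\overline{u}_r u_s$; equivalently, it is the norm of the induced functional $w\mapsto\int_D(\cdots)\overline{w}\,dx$. The main point is to control the triple products of field coefficients.

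\textbf{Step 1: product estimate.} By H\"older, $\lVert u_m\overline{u}_r u_s\rVert_{L^2}\le \lVert u_m\rVert_{L^6}\lVert u_r\rVert_{L^6}\lVert u_s\rVert_{L^6}$. The Sobolev embedding $H^1(D)\hookrightarrow L^6(D)$ holds in three dimensions for bounded $\mathcal{C}^1$ inclusions, and gives $\lVert u_j\rVert_{L^6}\le C_S\lVert u_j\rVert_{H^1}\le \sqrt2\,C_S\,\nu e^{\alpha j}(j+1)^{-p}$. Absorb $2^{3/2}C_S^3$ into the constant. Each summand is then bounded by
\[
v_{\min}^{-2}\mu^2\nu^3 e^{\alpha(k+l-2+m+r+s)}\,k^{-p}l^{-p}(m+1)^{-p}(r+1)^{-p}(s+1)^{-p}.
\]
Since $k+l+m+r+s=n+2$, the exponential factor is exactly $e^{\alpha(n-2)}$ and can be pulled out of the sum.

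\textbf{Step 2: reduction to a purely combinatorial bound.} Shift $m'=m+1$, $r'=r+1$, $s'=s+1$. It then remains to show
\[
S_n^{(2)}:=\sum_{k+l+m'+r'+s'=n+5} k^{-p}l^{-p}m'^{-p}r'^{-p}s'^{-p}\le C_p'(n+1)^{-p},
\]
where the sum runs over all indices $\ge1$, with the excluded indices ($m,r,s\neq n$ and $k,l\neq n+1$) removed. Ordinarily I would group the terms, for instance $(k,l)$ and $(m',r',s')$, and apply the three-index bound from Lemma~\ref{lem:bound1} twice. Concretely, $T_j:=\sum_{m'+r'+s'=j}(m'r's')^{-p}\le C j^{-p}$ for an absolute $C$ when $p\ge2$, by the same corner-concentration argument.

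\textbf{Step 3: the main obstacle.} To get $C_p'\to0$, I must use the exclusions carefully rather than these crude nested bounds, because the corner terms give $O(1)$ contributions. Take the corner where one index carries $n+1$ and all others equal $1$. For a field index this means $m=n+1$ with the remaining field indices $0$ and $k=l$ free; but $k,l\ge1$ forces $k+l\ge2$, so in fact $m\le n$. The exclusion $m\ne n$ then gives $m\le n-1$, so $m'\le n$, and similarly $k,l\le n$. Every admissible term therefore has its largest index at most $n$ while the total is $n+5$. Exactly as in Lemma~\ref{lem:bound1}, I split by the largest index $M\le n$. The remaining four indices sum to $n+5-M\ge5$, and by the four-fold version of the inner-sum bound their contribution is at most $C(n+5-M)^{-p+3}$.

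\textbf{Step 4: closing the estimate.} It then remains to bound
\[
\sum_{M}M^{-p}(n+5-M)^{-p+3}\cdot 5.
\]
For this I interpolate between the bounds $\frac{n+1}{M(n+5-M)}\le\frac{n+1}{4n}$-type and $\le \frac{c}{n+5-M}$, introducing an auxiliary $q$ exactly as before. This gives $S_n^{(2)}\le c^{q}\zeta(q-4)\,(\tfrac{n+1}{\kappa n})^{p-q}(n+1)^{-p}$ with $\kappa>1$, which tends to $0$ as $p\to\infty$ for fixed $q>5$. Small values of $n$ are handled directly, since the sum is then finite and each term decays geometrically in $p$.
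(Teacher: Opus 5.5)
\textbf{Assessment.} Your strategy is essentially the paper's, but the inner four-fold estimate in Step 3 is false as stated, and that is the step responsible for $C_p'\to0$. The shared strategy is: shift to indices summing to $n+5$, use the exclusions to cap every index at $n$, split off the largest index $M$ (with a factor $5$), bound the remaining four-fold sum by ``number of terms times largest term'', and interpolate with an auxiliary $q$.

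\textbf{The gap in Step 3.} The bound $\sum_{i_1+\dots+i_4=j}\prod_t i_t^{-p}\le C\,j^{-p+3}$ cannot hold with $C$ independent of $p$. For $j=5$ the left side is $4\cdot 2^{-p}$ and the right side is $125\,C\,5^{-p}$, which forces $C\ge\frac{4}{125}(5/2)^p$. The $j=5$ configurations, i.e.\ $(n,2,1,1,1)$ up to permutation, are exactly the dominant terms of $S_n^{(2)}$, so this is not a harmless constant. Step 4 drops it, and the ratio you interpolate, $\frac{n+1}{M(n+5-M)}$, reflects the false bound rather than the true size of the terms. As written, $C_p'\to0$ is therefore not established. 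The same misconception appears in Step 2: $T_j\le Cj^{-p}$ with an absolute $C$ already fails at $j=3$, where $T_3=1$. That step is not load-bearing for you, though.

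\textbf{The repair, as in the paper.} Four positive integers with sum $j$ have product at least $j-3$. Hence the inner sum is at most $\binom{j-1}{3}(j-3)^{-p}$, and the ratio to control is $\frac{n+1}{M(n+2-M)}$.
\begin{itemize}
\item Since $M\ge\frac{n+5}{5}$, we have $M\ge2$; since $M\le n$, we have $n+2-M\ge2$.
\item Two integers $\ge2$ summing to $n+2$ have product $\ge 2n$. So the ratio is at most $\frac{n+1}{2n}\le\frac34$ for $n\ge2$; for $n=1$ the sum is empty.
\item $M\ge\frac{n+5}{5}$ also bounds the ratio by $\frac{5}{n+2-M}$.
\end{itemize}
Interpolating between these two bounds gives
\[
S_n^{(2)}\le 5\Bigl(\tfrac34\Bigr)^{p-q}(n+1)^{-p}\sum_{j\ge5}\binom{j-1}{3}\Bigl(\frac{5}{j-3}\Bigr)^{q},
\]
which is finite for $q>4$ and tends to zero as $p\to\infty$.

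\textbf{A separate slip in Step 1.} The exponent is $\alpha(k-1+l-1+m+r+s)=\alpha n$, not $\alpha(n-2)$. To reach the stated $e^{\alpha(n-2)}$ you must absorb $e^{2\alpha}$ into $C_p'$. This is legitimate, since $C_p'$ need only be independent of $n$, and it still tends to zero as $p\to\infty$ for fixed $\alpha$. The paper's proof does not track this factor at all.
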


\begin{proof}
    Applying our assumptions together with the Sobolev-Embedding Theorem, we find that
    \begin{equation}
        \begin{aligned}
            &\Bigl\lVert \sum_{\substack{k+l+m+r+s = n+2, \\ 0 \leq m, r, s \leq n-1, \\ 1 \leq k, l \leq n}}
            \int_D \frac{\omega_k \omega_l}{v_i^2} u_m \overline{u}_r u_s \overline{w} \, dx
            \Bigr\rVert_{L^2} \\
            &\qquad \leq C_S^3 v_{\min}^{-2} \mu^2 \nu^3
              \sum_{\substack{k+l+m+r+s = n+2, \\ 0 \leq m, r, s \leq n-1, \\ 1 \leq k, l \leq n}}
              k^{-p} l^{-p} (m+1)^{-p} (r+1)^{-p} (s+1)^{-p} \\
            &\qquad = C_S^3 v_{\min}^{-2} \mu^2 \nu^3
              \sum_{\substack{k+l+m+r+s = n+5, \\ 1 \leq m, r, s \leq n, \\ 1 \leq k, l \leq n}}
              k^{-p} l^{-p} m^{-p} r^{-p} s^{-p}.
        \end{aligned}
    \end{equation}
    As before, we seek an upper bound for
    \begin{equation}
        S_n^{(2)} :=\sum_{\substack{k+l+m+r+s = n+5, \\ 1 \leq m, r, s \leq n, \\ 1 \leq k, l \leq n}}
        k^{-p} l^{-p} m^{-p} r^{-p} s^{-p}.
    \end{equation}
    We may again assume that the indices are ordered according to size, with the largest index having at least size $\frac{n}{5}+1$,
    \begin{equation}
        \begin{aligned}
            S_n^{(2)}
            &\leq 5 \sum_{k=5}^{\frac{4}{5}n+5}
              \sum_{\substack{l+m+r+s = k, \\ 1 \leq l, m, r, s \leq k}}
              (n+5-k)^{-p} l^{-p} m^{-p} r^{-p} s^{-p} \\
            &\leq 5 (n+1)^{-p}\sum_{k=5}^{\frac{4}{5}n+4}
              \binom{k-1}{3} \left(\frac{n+1}{(n+5-k)(k-3)}\right)^{p} . \
        \end{aligned}
    \end{equation}
    Proceeding as in the proof of Lemma \ref{lem:bound1}, we have two distinct upper bounds for the fraction
    \begin{equation}
    \frac{n+1}{(n+5-k)(k-3)}\leq\frac{n+1}{2n}
    \end{equation}
    and using the fact that $n+5-k\geq \frac{1}{5}n+1$ for $k\leq \frac{4}{5}n+4$, we have
    \begin{equation}
       \frac{n+1}{(n+5-k)(k-3)}\leq \frac{5}{k-3}.
    \end{equation}
    Thus,
    \begin{equation}
        \begin{aligned}
                        S_n^{(2)}&\leq 5 (n+1)^{-p} \sum_{k=5}^{\frac{4}{5}n+4}
              \binom{k-1}{3} \left( \frac{n+1}{2n} \right)^{p-q}
              \left( \frac{5}{k-3} \right)^{q} \\
            &\leq 5 (n+1)^{-p} \sum_{k=5}^{\infty}
              \binom{k-1}{3} \left( \frac{n+1}{2n} \right)^{p-q}
              \left( \frac{5}{k-3} \right)^{q}.
        \end{aligned}
    \end{equation}
    Choosing $q >4$, the sum converges, and the claim follows.
\end{proof}

Before we can proceed with bounding the term related to the DtN operator, we first require two technical Lemmas. 
\begin{lemma}\label{lem:DtN_decay}
    It holds that
    \begin{equation}
        \lVert T_l \rVert \leq C R^l.
    \end{equation}
\end{lemma}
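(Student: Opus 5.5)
The plan is to obtain the bound from analyticity of the map $\omega \mapsto \mathcal{T}^\omega$ near $\omega = 0$, followed by a Cauchy estimate. Here $\mathcal{T}_l$ denotes the Taylor coefficients in the expansion $\mathcal{T}^\omega = \sum_{l\ge 0} \omega^l \mathcal{T}_l$, viewed as bounded operators $H^{1/2}(\partial D)\to H^{-1/2}(\partial D)$. If $\omega\mapsto\mathcal{T}^\omega$ is holomorphic on a closed disc $\{\lvert\omega\rvert\le r\}$ for some $r>0$, then Cauchy's integral formula for operator-valued holomorphic functions gives
\begin{equation*}
    \mathcal{T}_l = \frac{1}{2\pi i}\oint_{\lvert\omega\rvert = r} \frac{\mathcal{T}^\omega}{\omega^{l+1}}\, d\omega,
    \qquad\text{hence}\qquad
    \lVert \mathcal{T}_l\rVert \le \Bigl(\sup_{\lvert\omega\rvert=r}\lVert\mathcal{T}^\omega\rVert\Bigr)\, r^{-l}.
\end{equation*}
This is the claim with $C = \sup_{\lvert\omega\rvert = r}\lVert \mathcal{T}^\omega\rVert$ and $R = 1/r$. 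The supremum is finite because a holomorphic operator-valued function is norm-continuous on the compact circle.

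The substance is therefore to show that $\omega = 0$ is a regular point, so that some disc around it avoids all poles.

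\textbf{Finite case $\lvert\mathcal{I}\rvert<\infty$.} By Proposition~\ref{prop:dtn_analytic}, $\mathcal{T}^\omega$ is meromorphic with poles only at the exterior Dirichlet scattering resonances. Poles form a discrete set, so it suffices to show that $\omega=0$ is not one of them. At $\omega=0$ the exterior Dirichlet problem for the Laplace equation in $\mathbb{R}^3\setminus\overline{D}$, with decay at infinity, is uniquely solvable: a solution with zero boundary data and decay at infinity vanishes by the maximum principle. Hence $\mathcal{T}_0$ is well defined and some disc of radius $r>0$ contains no resonance.

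As an alternative, or a cross-check, I would use the layer-potential representation $\mathcal{T}^\omega = \bigl(-\tfrac12 I + (\mathcal{K}^{\omega}_D)^*\bigr)(\mathcal{S}^\omega_D)^{-1}$ together with the kernel expansion
\begin{equation*}
    e^{i\omega\lvert x-y\rvert}/\lvert x-y\rvert = \sum_l \frac{(i\omega)^l}{l!}\,\lvert x-y\rvert^{l-1}.
\end{equation*}
This makes the coefficients of $\mathcal{S}^\omega_D$ decay like $(\operatorname{diam} D)^{l}/l!$. Invertibility of $\mathcal{S}^0_D$ in three dimensions, combined with a Neumann series, then yields explicit geometric bounds for the coefficients of $(\mathcal{S}^\omega_D)^{-1}$, and hence an explicit $R$.

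\textbf{Periodic case.} Here Proposition~\ref{prop:dtn_analytic_periodic} applies with $\omega_0=0$. What must be checked is that $0$ lies outside the spectrum of the exterior Dirichlet Laplacian. This follows from Friedrichs' inequality: the presence of a periodic Dirichlet obstacle in every cell bounds the spectrum below by a positive constant. Analyticity then holds on a small disc, and the Cauchy estimate applies verbatim.

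The main obstacle I expect is this regularity at $0$, together with uniformity of $\sup_{\lvert\omega\rvert=r}\lVert\mathcal{T}^\omega\rVert$. In the periodic setting one also has to make sure that the analyticity neighbourhood is a genuine complex disc in $\omega$, not just in $\omega^2$. If only analyticity in $\omega^2$ is available, I would apply the Cauchy estimate in the variable $\omega^2$ and then substitute back, which yields geometric growth $R^l$ in $\omega$ as well (with $R$ replaced by $\sqrt{R}$ up to constants).
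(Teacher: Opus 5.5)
Your argument is correct and is essentially the paper's own: the paper gives no separate proof of this lemma. It relies on the remark after Proposition~\ref{prop:dtn_analytic_periodic} that analyticity of $\omega \mapsto \mathcal{T}^\omega$ away from the exterior Dirichlet spectrum yields a convergent power series, and your Cauchy estimate on a pole-free disc $\lvert\omega\rvert \le r$ around $0$ (with $R = 1/r$) is exactly the quantitative form of that step. Your checks that $0$ is a regular point (maximum principle in the finite three-dimensional case, Friedrichs' inequality in the periodic case), and your fallback when analyticity is only known in $\omega^2$, supply details the paper leaves implicit.
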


\begin{lemma}\label{lem:sum}
    Let $k,l \in \mathbb{N}$ be such that $k\geq l$ and $p>0$. Define
    \begin{equation}
        I(p,k,l) = \sum_{\substack{i_1+\dots+i_l=k\\1\leq i_j}}\prod i_j^{-p}.
    \end{equation}
    Then, there exists $C_p$ such that
    \begin{equation}
        I(p,k,l) \leq \frac{C_p^{l-1}}{k^p}.
    \end{equation}
    In particular, one can choose
    \begin{equation}
        C_p = 2^{p+2}.
    \end{equation}
    For $l=2$, the claim also holds with
    \begin{equation}
        C_p(k) = 2\left(\frac{k}{k-1}\right)^p + 2^{p+1}(\zeta(p)-1) + \left(\frac{4}{k}\right)^p.
    \end{equation}
\end{lemma}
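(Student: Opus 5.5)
We would prove the lemma by induction on $l$. All the real work is in the case $l=2$; the general case then follows from a convolution recursion. Throughout we assume $p>1$ (in fact $p$ large, which is how the lemma is used in Lemmas~\ref{lem:bound1} and~\ref{lem:bound2}). For $p\le 1$ the claim cannot hold as stated: for $p<1$ one has $I(p,k,2)\asymp k^{1-2p}\gg k^{-p}$, and for $p=1$ a $\log k$ growth appears. So we would state this restriction explicitly.

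\emph{Reduction to $l=2$.} For $l=1$ we have $I(p,k,1)=k^{-p}$, which is the claim with $C_p^0=1$. For $l\ge 2$ we isolate the last index:
\begin{equation*}
I(p,k,l)=\sum_{i=1}^{k-l+1} i^{-p}\, I(p,k-i,l-1)\le C_p^{l-2}\sum_{i=1}^{k-1} i^{-p}(k-i)^{-p}=C_p^{l-2}\, I(p,k,2).
\end{equation*}
Here the inequality uses the induction hypothesis $I(p,k-i,l-1)\le C_p^{l-2}(k-i)^{-p}$. Hence it suffices to prove $I(p,k,2)\le C_p k^{-p}$, and the induction closes at once.

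\emph{The case $l=2$ with $C_p=2^{p+2}$.} The key identity is
\begin{equation*}
\frac{k}{i(k-i)}=\frac1i+\frac1{k-i}.
\end{equation*}
Since $x\mapsto x^p$ is convex for $p\ge1$, this gives $\bigl(\frac{k}{i(k-i)}\bigr)^p\le 2^{p-1}\bigl(i^{-p}+(k-i)^{-p}\bigr)$. Summing over $1\le i\le k-1$ yields
\begin{equation*}
k^p I(p,k,2)\le 2^p\sum_{i=1}^{k-1} i^{-p}\le 2^p\zeta(p).
\end{equation*}
It then remains to check $\zeta(p)\le 4$. This follows from $\zeta(p)\le 1+\frac{1}{p-1}$, valid for $p\ge 4/3$, which covers the regime of interest. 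The bound $\zeta(p)\le4$ is the only place where the lower bound on $p$ enters, and it is where the constant $2^{p+2}$ comes from.

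\emph{Refined $k$-dependent constant for $l=2$.} We would split the sum according to the position of $i$.
\begin{itemize}
\item The two endpoint terms $i=1$ and $i=k-1$ contribute exactly $2(k-1)^{-p}=2\bigl(\tfrac{k}{k-1}\bigr)^p k^{-p}$.
\item For the interior terms we use the symmetry $i\leftrightarrow k-i$ and restrict to $2\le i<k/2$, at the cost of a factor $2$. There $k-i\ge k/2$, so each term is at most $2^p k^{-p} i^{-p}$. Summing over $i\ge2$ gives at most $2^{p+1}(\zeta(p)-1)k^{-p}$.
\item If $k$ is even, the midpoint $i=k/2$ contributes $(2/k)^{2p}=(4/k)^p k^{-p}\cdot k^{-p}\cdot k^{p}\le(4/k)^p k^{-p}$ once $k\ge 4$ (small $k$ can be checked directly).
\end{itemize}
Adding the three contributions gives $C_p(k)$ as stated.

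The main obstacle is purely one of bookkeeping. The midpoint and endpoint terms must be counted without double counting, and the admissible range of $p$ must be stated honestly, because the lemma as written (``$p>0$'') is false for $p\le1$.
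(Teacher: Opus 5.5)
Your proof is correct. The paper states this lemma without proof (it is introduced as a ``technical lemma'' and used directly in Lemma~\ref{lem:bound3}), so there is no argument of the authors to compare with; your proposal supplies the missing proof.

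The steps all check out:
\begin{itemize}
\item The reduction $I(p,k,l)=\sum_{i=1}^{k-l+1} i^{-p}I(p,k-i,l-1)\le C_p^{l-2}I(p,k,2)$ is exactly right.
\item The identity $\frac{k}{i(k-i)}=\frac1i+\frac1{k-i}$ together with convexity gives the clean bound $k^pI(p,k,2)\le 2^p\zeta(p)$.
\item Your endpoint/interior/midpoint split reproduces the stated $C_p(k)$ term by term. The shape of $C_p(k)$ suggests this is the split the authors had in mind.
\end{itemize}
Your constant $2^p\zeta(p)$ is in fact sharp at both ends. The case $k=2$ gives $k^pI(p,k,2)=2^p$. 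Pairing $i\leftrightarrow k-i$ gives $k^pI(p,k,2)=\sum_{i=1}^{k-1}\bigl(\frac1i+\frac1{k-i}\bigr)^p\ge 2\sum_{1\le i<k/2}i^{-p}\to 2\zeta(p)$.

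That lower bound also shows that your restriction on $p$ is genuinely necessary, not an artifact of your method. It is needed in a slightly stronger sense than you state. Besides the failure for $p\le 1$, the explicit constant $2^{p+2}$ is false for $p$ slightly above $1$. For example, at $p=1.1$ one has $2\zeta(1.1)\approx 21.2>2^{3.1}\approx 8.6$, so the bound fails for large $k$. Some threshold such as your $p\ge 4/3$ must therefore be added to the statement. This is harmless for the paper, whose estimates only use large $p$: Lemmas~\ref{lem:bound1} and~\ref{lem:bound2} already require $p>q>2$ and $p>q>4$, respectively.

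One cosmetic point concerns the midpoint term. It equals $(k/2)^{-2p}=(4/k)^p k^{-p}$ exactly for every even $k$, so the factors $k^{-p}\cdot k^{p}$ and the caveat ``once $k\ge 4$'' are superfluous. For $k=2$ the midpoint coincides with the endpoint $i=1$ and is counted twice, but this only makes the upper bound cruder.
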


\begin{lemma}\label{lem:bound3}
    Under the same assumptions as in Lemmas~\ref{lem:bound1} and~\ref{lem:bound2},
    there exists some $C_{\alpha,p} > 0$, again independent of $n$, such that for $\alpha$ large enough,
    \begin{equation}
        \Bigl\lvert \sum_{k+l = n-2} \langle \mathcal{A}_k u_l, v \rangle \Bigr\rvert
        \leq \frac{C_{\alpha,p} \nu \exp(\alpha(n-2))}{(n-1)^p} \lVert v \rVert_{H^1}.
    \end{equation}
    Moreover, $C_{\alpha,p}\rightarrow 0$ as $\alpha$ tends to infinity for any fixed $p$.
\end{lemma}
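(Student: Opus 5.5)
We need to bound $\sum_{k+l=n-2}\langle \mathcal{A}_k u_l, v\rangle$ where $\mathcal{A}_0=\mathcal{T}_0$ and, for $k\ge1$,
\[
\mathcal{A}_k=\sum_{j=1}^k v^{-j}\mathcal{T}_j\sum_{i_1+\dots+i_j=k}\prod_{r}\omega_{i_r}.
\]
The plan is to estimate each $\mathcal{A}_k$ in operator norm with Lemma~\ref{lem:DtN_decay} and Lemma~\ref{lem:sum}. The resulting convolution against $\lVert u_l\rVert_{H^1}$ is then handled with the $l=2$ case of Lemma~\ref{lem:sum}. Throughout, the trace theorem gives $\lvert\langle \mathcal{T}_j f, v\rangle\rvert\le C_{\mathrm{tr}}^2\lVert\mathcal{T}_j\rVert\,\lVert f\rVert_{H^1}\lVert v\rVert_{H^1}$, and $\lVert u_l\rVert_{H^1}\le \sqrt2\,\nu e^{\alpha l}/(l+1)^p$ by the hypotheses of Lemma~\ref{lem:bound1}.

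\textbf{Step 1: bound on $\mathcal{A}_k$.} Insert $\lvert\omega_i\rvert\le \mu e^{\alpha(i-1)}i^{-p}$. For indices with $i_1+\dots+i_j=k$ the exponentials combine to $\mu^j e^{\alpha(k-j)}$, and the remaining sum is $I(p,k,j)\le C_p^{j-1}k^{-p}$ by Lemma~\ref{lem:sum}. Hence
\[
\lVert\mathcal{A}_k\rVert\le \frac{C e^{\alpha k}}{k^p}\sum_{j=1}^k \Bigl(\frac{R\mu}{v}\Bigr)^j C_p^{j-1}e^{-\alpha j}
\le \frac{C' e^{\alpha k}}{k^p}\,\frac{R\mu v^{-1}e^{-\alpha}}{1-R\mu C_p v^{-1}e^{-\alpha}},
\]
provided $\alpha$ is large enough that $R\mu C_p e^{-\alpha}/v<1/2$. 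This is the origin of the phrase ``for $\alpha$ large enough''. Denote the prefactor by $\kappa_\alpha e^{\alpha k}k^{-p}$; note that $\kappa_\alpha=O(e^{-\alpha})\to0$ as $\alpha\to\infty$ for fixed $p$.

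\textbf{Step 2: the convolution.} The term $k=0$ has to be treated separately. It equals $\langle\mathcal{T}_0u_{n-2},v\rangle$, which carries no small factor, so it cannot be made small in $\alpha$. I expect that the sum in the statement is meant over $k\ge1$, consistent with $F_n$ in \eqref{eq:remainder}, where the sum starts at $k=1$, while $\mathcal{T}_0u_{n-2}$ is absorbed into $\mathcal{L}_{u_0}$. I will therefore prove the estimate for $1\le k\le n-2$ and remark on this point.

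For $k\ge1$ the contributions are bounded by
\[
\sqrt2\,\kappa_\alpha\nu e^{\alpha(n-2)}\sum_{k=1}^{n-2}k^{-p}(n-1-k)^{-p},
\]
where $l=n-2-k$ has been shifted so that $l+1=n-1-k$. This sum is exactly $I(p,n-1,2)\le C_p(n-1)/(n-1)^p$ by Lemma~\ref{lem:sum}. Since $C_p(k)$ from the $l=2$ case is bounded uniformly in $k\ge2$, it is bounded by $2^{p+2}$. Collecting the constants gives $C_{\alpha,p}=\sqrt2\,C_{\mathrm{tr}}^2\,2^{p+2}\kappa_\alpha$, which tends to $0$ as $\alpha\to\infty$ for fixed $p$.

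\textbf{Main obstacle.} The only genuine difficulty is Step 1. The number of compositions of $k$ into $j$ parts grows combinatorially, and the growth $R^j$ of $\mathcal{T}_j$ must be compensated. Lemma~\ref{lem:sum} turns this into a geometric series in $C_pR\mu e^{-\alpha}/v$, and the freedom to enlarge $\alpha$, which costs nothing in the induction since $e^{\alpha k}$ is the assumed growth, makes the series converge with a small sum. The mismatch between the exponent $e^{\alpha(i-1)}$ for $\omega$ and $e^{\alpha k}$ for $u$ is precisely what produces the factor $e^{-\alpha j}$, so the bookkeeping has to be done carefully.
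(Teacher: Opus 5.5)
Your proof is correct and follows the paper's argument: you bound $\lVert\mathcal{A}_k\rVert$ via Lemmas~\ref{lem:DtN_decay} and~\ref{lem:sum} to obtain a geometric series in $e^{-\alpha}$ that is $O(e^{-\alpha})$, and then close the convolution with the two-index case of Lemma~\ref{lem:sum}, getting $2^{p+2}(n-1)^{-p}$. Your explicit restriction to $k\geq 1$ matches what the paper does implicitly, since its bound uses $k^{-p}$ and $\mathcal{T}_0u_{n-2}$ is handled through $\mathcal{L}_{u_0}$ in Lemma~\ref{lem:bound4}; one small correction is that $2^{p+2}$ should be taken directly from the general case $C_p=2^{p+2}$ of Lemma~\ref{lem:sum}, not deduced from the $C_p(k)$ formula, which can exceed $2^{p+2}$ for small $p$.
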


\begin{proof}
    Using Lemmas \ref{lem:DtN_decay} and \ref{lem:sum}, we have
    \begin{equation}
        \begin{aligned}
            \lVert \mathcal{A}_k \rVert
            &\leq \sum_{l=1}^{k} v^{-l}\lVert T_l \rVert
              \sum_{i_1 + \dots + i_l = k} \prod_j \lvert \omega_{i_j} \rvert \\
            &\leq C_T \exp(\alpha k) \, k^{-p}
              \sum_{l=1}^{k} v^{-l}R^l \mu^l \exp(-\alpha l) \, 2^{(l-1)(p+2)} .\\
        \end{aligned}
    \end{equation}
    For $\alpha$ large enough, the above geometric sum converges, and upon making $\alpha$ even larger, tends to 0. Hence, we have
    \begin{equation}
            \lVert \mathcal{A}_k \rVert \leq C_{\alpha,p} \exp(\alpha k) \, k^{-p}.
    \end{equation}
    We therefore obtain
    \begin{equation}
        \begin{aligned}
            \Bigl\lvert \sum_{k+l = n-2} \langle \mathcal{A}_k u_l, v \rangle \Bigr\rvert
            &\leq \sum_{k+l = n-2}
              \lVert \mathcal{A}_k \rVert \, \lVert u_l \rVert_{H^1} \, \lVert v \rVert_{H^1} \\
            &\leq C_{\alpha,p} \nu \exp(\alpha(n-2)) \, \lVert v \rVert_{H^1}
              \sum_{k+l = n-2} k^{-p} (l+1)^{-p} \\
            &\leq C_{\alpha,p} \nu \exp(\alpha(n-2)) \, \lVert v \rVert_{H^1} \,
              2^{p+2} (n-1)^{-p}.
        \end{aligned}
    \end{equation}
\end{proof}

\begin{lemma}\label{lem:bound4}
    Assume that the estimates of Lemmas~\ref{lem:bound1}, \ref{lem:bound2} and~\ref{lem:bound3} are satisfied up to some $n$.
    Then it holds that
    \begin{equation}
        \lVert \nabla u_{n,0} \rVert_{L^2}
        \leq \widetilde{C}_{\alpha,p} \, \frac{\nu \exp(\alpha n)}{(n+1)^p},
    \end{equation}
    where
    \begin{equation}
        \lim_{\alpha \to \infty} \lim_{p \to \infty} \widetilde{C}_{\alpha,p} = 0.
    \end{equation}
\end{lemma}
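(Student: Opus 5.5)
Since $u_{n,0}$ is determined in Step~1 of the cascade by testing \eqref{eq:cascade2} at level $n$ against $w\in\mathcal{V}_0$, the plan is to obtain the bound from the coercivity of the Dirichlet form on $\mathcal{V}_0$. For $w\in\mathcal{V}_0$ the Poincar\'e--Wirtinger inequality gives $\lVert w\rVert_{H^1}\le C_{PW}\lVert\nabla w\rVert_{L^2}$. Testing with $w=u_{n,0}$, and using $\nabla u_n=\nabla u_{n,0}$ since $u_{n,d}$ is resonator-wise constant, we obtain
\begin{equation}
    \lVert \nabla u_{n,0}\rVert_{L^2}^2 = \langle \nabla u_n,\nabla u_{n,0}\rangle = \langle G_n, u_{n,0}\rangle,
\end{equation}
where $G_n$ collects every term of \eqref{eq:cascade1} at level $n$ other than the gradient term. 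Consequently $\lVert\nabla u_{n,0}\rVert_{L^2}\le C_{PW}\lVert G_n\rVert_{(H^1)^*}$, and it suffices to bound the three groups of terms in $G_n$ by the already established lemmas.

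\textbf{Step 1 (linear frequency terms).} The sum $\sum_{k+l+m=n}\omega_k\omega_l v_i^{-2}u_m$ involves only $u_m$ with $m\le n-2$ and $\omega_k$ with $k\le n-1$. It is therefore a level-$(n-2)$ instance of the sum handled in Lemma~\ref{lem:bound1}: shift $n\mapsto n-2$ and also allow the extremal indices $m=n-2$, $k,l=1$, which contribute at most $\mu^2\nu e^{\alpha(n-2)}(n-1)^{-p}v_{\min}^{-2}$. The total bound is $C\,v_{\min}^{-2}\mu^2\nu\, e^{\alpha(n-2)}(n-1)^{-p}$.

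\textbf{Step 2 (cubic terms).} The cubic term is treated in the same way by Lemma~\ref{lem:bound2}, with the Sobolev embedding $H^1\hookrightarrow L^4$ converting the $L^2$-type bound into an $(H^1)^*$ bound.

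\textbf{Step 3 (DtN terms).} The DtN terms $\sum_{k=0}^{n-2}\langle\mathcal{A}_k u_{n-k-2},w\rangle$ are bounded by Lemma~\ref{lem:bound3}. The $k=0$ term $\mathcal{T}_0u_{n-2}$ is estimated separately by $\lVert\mathcal{T}_0\rVert\,\nu e^{\alpha(n-2)}(n-1)^{-p}$.

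\textbf{Step 4 (collecting constants).} Each of the three contributions carries the factor $e^{\alpha(n-2)}(n-1)^{-p}$. To compare with the target rate we write
\begin{equation}
    e^{\alpha(n-2)}(n-1)^{-p} = e^{-2\alpha}\left(\tfrac{n+1}{n-1}\right)^{p} e^{\alpha n}(n+1)^{-p}\le e^{-2\alpha}3^p\,e^{\alpha n}(n+1)^{-p}.
\end{equation}
This gives $\widetilde C_{\alpha,p}=C_{PW}e^{-2\alpha}\bigl(\ldots\bigr)\bigl(\tfrac{n+1}{n-1}\bigr)^p$.

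The main obstacle lies in this last step: securing the double limit $\lim_{\alpha\to\infty}\lim_{p\to\infty}\widetilde C_{\alpha,p}=0$. The factor $\bigl(\tfrac{n+1}{n-1}\bigr)^p$ grows with $p$ for small $n$, so the order of limits matters. The plan is as follows. For $n\ge n_0(p)$ this ratio is bounded by $e^{2p/(n-1)}$. Conversely, the constants $C_p$ and $C_p'$ from Lemmas~\ref{lem:bound1} and~\ref{lem:bound2} decay like $(\tfrac{n+1}{2n})^{p-q}$, which beats $(\tfrac{n+1}{n-1})^p$ once $n\ge 4$. The finitely many small $n$ are absorbed by enlarging $\nu$ (the induction base), and the non-decaying pieces ($\mathcal{T}_0$, the extremal terms) are killed by the prefactor $e^{-2\alpha}$ as $\alpha\to\infty$. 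This also explains why the $\alpha$-limit is taken last.
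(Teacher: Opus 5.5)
Up to your Step~4 you reproduce the paper's proof. Both use coercivity of the Dirichlet form on $\mathcal{V}_0$. Both split the right-hand side into the $\mathcal{L}_{u_0}u_{n-2}$-type pieces (including $\mathcal{T}_0u_{n-2}$) and a remainder controlled by Lemmas~\ref{lem:bound1}--\ref{lem:bound3}. Both rewrite $e^{\alpha(n-2)}(n-1)^{-p}=e^{-2\alpha}\bigl(\tfrac{n+1}{n-1}\bigr)^p e^{\alpha n}(n+1)^{-p}$, which is exactly the paper's term $C_{\mathcal{L}_{u_0}}\bigl(\tfrac{n+1}{n-1}\bigr)^p e^{-2\alpha}$.

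\textbf{Where it goes wrong.} Your final paragraph has a step that does not work. To justify the literal order (inner limit $p\to\infty$ at fixed $\alpha$) you need $e^{2p/(n-1)}=O(1)$, i.e.\ $n_0(p)\gtrsim p$. So as $p\to\infty$ the set of exceptional $n$ is unbounded, and the inductive estimate is never established for those $n$. Absorbing them by enlarging $\nu$ is not free. The required $\nu$ must dominate $\lVert u_k\rVert_{L^2}(k+1)^p e^{-\alpha k}$ for $k$ up to $n_0(p)$, so it grows with $p$. Yet $\nu$ re-enters your constant through the cubic term as $C_p'\mu^2\nu^2$, so the argument is circular. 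Independently, the constant $C_{\alpha,p}$ of Lemma~\ref{lem:bound3} comes from a geometric series with ratio $v^{-1}R\mu\,2^{p+2}e^{-\alpha}$. It is therefore only available for $\alpha$ large \emph{depending on} $p$; at fixed $\alpha$ it breaks down as $p\to\infty$, and no case split in $n$ repairs that. A minor point: $\bigl(\tfrac{n+1}{2n}\bigr)^{p-q}\bigl(\tfrac{n+1}{n-1}\bigr)^p\to0$ needs $(n+1)^2<2n(n-1)$, i.e.\ $n\ge5$ rather than $n\ge 4$, and $n \geq 6$ after the index shift by two.

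\textbf{How to fix it.} Drop the splitting and reverse the order you argue for. For $n\le1$ one has $u_{n,0}=0$, and for $n\ge2$ you already have $\bigl(\tfrac{n+1}{n-1}\bigr)^p\le3^p$. Every term at level $n$ involves only $u_m$ with $m\le n-2$, so \emph{all} contributions carry the factor $e^{-2\alpha}3^p$. This gives $\widetilde C_{\alpha,p}\le C_{PW}\,3^pe^{-2\alpha}K_{\alpha,p}$, with $K_{\alpha,p}$ independent of $n$ and bounded as $\alpha\to\infty$ for each fixed $p$. Hence $\widetilde C_{\alpha,p}\to0$ as $\alpha\to\infty$ at fixed $p$, uniformly in $n$: fix $p$ first, then choose $\alpha=\alpha(p)$. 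This is what the paper's ``taking $p$ and $\alpha$ large enough'' (together with Lemma~\ref{lem:bound3}) actually delivers, namely the double limit with the $\alpha$-limit as the inner one. Your remark that the $\alpha$-limit must be taken last is therefore backwards, since $e^{-2\alpha}$ can only beat $3^p$ if $\alpha\to\infty$ at fixed $p$. For this lemma the $p$-limit is not even needed. It is required in Lemmas~\ref{lem:bound5}--\ref{lem:bound6}, where $F_{n+2}$ enters without the $e^{-2\alpha}$ gain.
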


\begin{proof}
    Recall that $u_{n,0}$ is uniquely determined by equation~\eqref{eq:cascade2}.
    In particular, because $\mathcal{V}_0 \subseteq \ker B$, we are tasked with solving
    \begin{equation}
        \langle \nabla u_{n,0}, \nabla w \rangle
        = \langle \mathcal{L}_{u_0} u_{n-2}, w \rangle + \langle F_n, w \rangle,
    \end{equation}
    for all $w \in \mathcal{V}_0$.
    Since the Laplacian is invertible on $\mathcal{V}_0$, a unique solution exists.
    Using Lemmas~\ref{lem:bound1}, \ref{lem:bound2} and~\ref{lem:bound3}, we obtain
    \begin{equation}
        \begin{aligned}
            \lVert \nabla u_{n,0} \rVert_{L^2}
            &\leq C_\Delta \bigl(
              \lVert F_n \rVert_{(H^1(D))^*}
              + C_{\mathcal{L}_{u_0}} \lVert u_{n-2} \rVert_{H^1}
            \bigr) \\
            &\leq \frac{\nu \exp(\alpha n)}{(n+1)^p} \, C_\Delta
            \Bigl(
              C_p \, v_{\min}^{-2} \mu^2
              + \sigma C_p \, v_{\min}^{-2} \mu^2 \nu^2
              + C_{\alpha,p} \\
            &\qquad\qquad
              + C_{\mathcal{L}_{u_0}}
              \Bigl( \frac{n+1}{n-1} \Bigr)^{\!p} \exp(-2\alpha)
            \Bigr),
        \end{aligned}
    \end{equation}
    where $C_p \to 0$ as $p \to \infty$ and $C_{\alpha,p} \to 0$ as $\alpha \to \infty$.
    Upon taking $p$ and $\alpha$ large enough, the claim follows.
\end{proof}

\begin{lemma}\label{lem:bound5}
    Assume that the estimates of Lemmas~\ref{lem:bound1}, \ref{lem:bound2}
    and~\ref{lem:bound3} are satisfied up to some $n$.
    Then, it holds that
    \begin{equation}
        \lvert \omega_{n+1} \rvert
        \leq \widetilde{C}_{\alpha,p} \, \frac{\mu \exp(\alpha n)}{(n+1)^p},
    \end{equation}
    where
    \begin{equation}
        \lim_{\alpha \to \infty} \lim_{p \to \infty} \widetilde{C}_{\alpha,p} = 0.
    \end{equation}
\end{lemma}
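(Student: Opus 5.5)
Following Step 2 of the recursive scheme in Subsection~\ref{ssec:cascade}, $\omega_{n+1}$ is determined by testing \eqref{eq:cascade2} at level $n+2$ against $iu_0$. The first step is to write this relation explicitly. Since $iu_0\in\ker\mathcal{L}_{u_0}$ and $iu_0\in\mathcal{V}_d$, the gradient term and the $\mathcal{L}_{u_0}u_n$ term both drop out. This leaves the scalar identity
\begin{equation*}
    \frac{2\omega_{n+1}\omega_1}{v_i^2}\Bigl(\lVert u_0\rVert_{L^2}^2+\sigma\lVert u_0\rVert_{L^4}^4\Bigr)\ \text{(weighted resonator-wise)} \;=\; \langle F_{n+2}, u_0\rangle ,
\end{equation*}
up to the harmless factor $i$ and taking real or imaginary parts as appropriate. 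Denote the left-hand coefficient by $\kappa$. By the non-degeneracy condition in Step 2 and $\omega_1\neq0$, $\kappa\neq 0$, and $\kappa$ depends only on $(\mathbf{a},\omega_1)$. This gives
\begin{equation*}
    \lvert\omega_{n+1}\rvert\le \lvert\kappa\rvert^{-1}\,\lVert u_0\rVert_{H^1}\,\lVert F_{n+2}\rVert_{(H^1(D))^*}.
\end{equation*}

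The second step is to estimate $\lVert F_{n+2}\rVert_{(H^1)^*}$ using the three convolution bounds. The index restrictions in \eqref{eq:remainder} at level $n+2$ are $1\le m\le n-1$ and $1\le k,l\le n$, so they match exactly the sums in Lemma~\ref{lem:bound1} and Lemma~\ref{lem:bound2}. For the cubic term, I will pair against $u_0$ via Hölder and the Sobolev embedding $H^1\hookrightarrow L^4$. The contributions are then as follows:
\begin{itemize}
    \item Lemma~\ref{lem:bound1} gives $C_p v_{\min}^{-2}\mu^2\nu\, e^{\alpha n}(n+1)^{-p}$.
    \item Lemma~\ref{lem:bound2} gives $C_p'v_{\min}^{-2}\mu^2\nu^3 e^{\alpha(n-2)}(n+1)^{-p}$.
    \item Lemma~\ref{lem:bound3}, applied at level $n+2$, gives $C_{\alpha,p}\nu e^{\alpha n}(n+1)^{-p}$ for the DtN term.
\end{itemize}
Collecting these,
\begin{equation*}
    \lvert\omega_{n+1}\rvert\le \frac{\mu e^{\alpha n}}{(n+1)^p}\cdot\frac{\lVert u_0\rVert_{H^1}\,\nu}{\lvert\kappa\rvert\,\mu}\Bigl(C_pv_{\min}^{-2}\mu^2+\lvert\sigma\rvert C_p'v_{\min}^{-2}\mu^2\nu^2e^{-2\alpha}+C_{\alpha,p}\Bigr),
\end{equation*}
and I define $\widetilde C_{\alpha,p}$ to be the prefactor.

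The final step is the double limit. For fixed $\alpha$, letting $p\to\infty$ kills the first two terms, since $C_p,C_p'\to0$. Letting $\alpha\to\infty$ afterwards kills $C_{\alpha,p}$.

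The main obstacle is this last limit. Lemma~\ref{lem:bound3} only asserts $C_{\alpha,p}\to0$ as $\alpha\to\infty$ for \emph{fixed} $p$, and its constant contains factors $2^{(l-1)(p+2)}$ that grow with $p$. Taking $p\to\infty$ first therefore does not directly commute with this. I would handle it by the same ordering convention as in Lemma~\ref{lem:bound4}: at the stage where $p\to\infty$ is taken, the DtN term is kept separate, and its vanishing is obtained by choosing $\alpha=\alpha(p)$ large, e.g.\ $e^{\alpha}\ge 2^{p+3}R\mu/v$, so that the geometric series in Lemma~\ref{lem:bound3} is small. The iterated limit in the statement should be read in this sense, and the bound is consistent with the one used for $u_{n,0}$.

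A second, minor point concerns $\mu$ and $\nu$: they are fixed before $\alpha$ and $p$, so the ratio $\nu/\mu$ is a fixed constant. The remaining $\mu^2$ and $\nu^2$ factors are also fixed and do not interfere with the limits.
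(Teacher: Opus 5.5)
Your proposal is correct and is essentially the paper's proof. You start from the Step~2 relation defining $\omega_{n+1}$, bound $\lvert\langle F_{n+2},u_0\rangle\rvert$ with Lemmas~\ref{lem:bound1}, \ref{lem:bound2} and~\ref{lem:bound3}, and divide by the nonzero coefficient $\kappa$. Your remark that $\alpha$ must be chosen after $p$ is right and makes precise the paper's ``choosing $\alpha$ and $p$ large enough'': the constant of Lemma~\ref{lem:bound3} necessarily blows up as $p\to\infty$ for fixed $\alpha$, so the literal order $\lim_{\alpha}\lim_{p}$ cannot hold.

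One point you inherit from the paper is that testing with $iu_0$ does not annihilate the $\mathcal{V}_0$-component of $\mathcal{L}_{u_0}u_n$. Indeed $\langle\mathcal{L}_{u_0}u_{n,0},iu_0\rangle=\langle\mathcal{T}_0u_{n,0},iu_0\rangle_{\partial D}$ is nonzero in general. This already-known term should be moved to the right-hand side and bounded via Lemma~\ref{lem:bound4} together with Poincar\'e--Wirtinger. Doing so leaves both the form of the estimate and the vanishing of $\widetilde C_{\alpha,p}$ unchanged.
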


\begin{proof}
    Recall that $\omega_{n+1}$ is determined by the equation
    \begin{equation}\label{eq:omega_projection}
        \left\langle B(\alpha),
        i u_0  \right\rangle
        = \left\langle
         F_{n+2}
         i u_0 \right\rangle,
    \end{equation}
    where $F_{n+2}$ is given by equation~\eqref{eq:remainder}.
    From Lemmas~\ref{lem:bound1}, \ref{lem:bound2}, and~\ref{lem:bound3}, we infer that
    \begin{equation}
        \lvert \langle F_{n+2}, u_0 \rangle \rvert
        \leq \frac{\exp(\alpha n)}{(n+1)^p}
        \bigl(
          C_p \, v_{\min}^{-2} \mu^2 \nu
          + \sigma C_p \, v_{\min}^{-2} \mu^2 \nu^3
          + C_{\alpha,p} \nu
        \bigr)
        \lVert u_0 \rVert_{H^1},
    \end{equation}
    where $C_p \to 0$ as $p \to \infty$ and $C_{\alpha,p} \to 0$ as $\alpha \to \infty$.
    That is,
    \begin{equation}
        \lvert \omega_{n+1} \rvert
        \leq \frac{\mu \exp(\alpha n)}{(n+1)^p} \cdot
        \frac{
          \bigl(
            C_p \, v_{\min}^{-2} \mu \nu
            + \sigma C_p \, v_{\min}^{-2} \mu \nu^3
            + C_{\alpha,p} \mu^{-1} \nu
          \bigr) \lVert u_0 \rVert_{H^1}
        }{
          2 \omega_1 v_{\max}^{-2}
          \bigl( \lVert u_0 \rVert^2 + \sigma \lVert u_0 \rVert_4^4 \bigr)
        }.
    \end{equation}
    Choosing $\alpha$ and $p$ large enough yields the claim.
\end{proof}

\begin{lemma}\label{lem:bound6}
    Assume that the estimates of Lemmas~\ref{lem:bound1}, \ref{lem:bound2},
    \ref{lem:bound3}, \ref{lem:bound4} and~\ref{lem:bound5} are satisfied up to some $n$.
    Then, it holds that
    \begin{equation}
        \lVert u_{n,d} \rVert_{H^1}
        \leq \widetilde{C}_{\alpha,p} \, \frac{\nu \exp(\alpha n)}{(n+1)^p},
    \end{equation}
    where
    \begin{equation}
        \lim_{\alpha \to \infty} \lim_{p \to \infty} \widetilde{C}_{\alpha,p} = 0.
    \end{equation}
\end{lemma}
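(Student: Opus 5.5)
We follow the pattern of Lemmas~\ref{lem:bound4} and~\ref{lem:bound5}: write down the equation that defines $u_{n,d}$, bound its right-hand side by the earlier lemmas, and invert using Assumption~\ref{as:invertability}. From Step 3 of Subsection~\ref{ssec:cascade}, $u_{n,d}\in\mathcal{W}_{u_0}$ is the unique solution of
\begin{equation*}
    \langle \mathcal{L}_{u_0} u_{n,d}, w\rangle = \langle F_{n+2} - B(\omega_{n+1}), w\rangle + \langle \mathcal{L}_{u_0} u_{n,0}, w\rangle, \qquad w\in\mathcal{W}_{u_0}.
\end{equation*}
Here the gradient term drops out because $w\in\mathcal{V}_d$. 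The term involving $u_{n,0}$ appears because $\mathcal{L}_{u_0}u_n = \mathcal{L}_{u_0}u_{n,d}+\mathcal{L}_{u_0}u_{n,0}$ and $u_{n,0}$ is already known from Step 1. Since $\mathcal{W}_{u_0}$ is finite dimensional, Assumption~\ref{as:invertability} gives a constant $C_{\mathrm{inv}}$, depending only on $u_0$ and $\omega_1$, with $\lVert u_{n,d}\rVert_{H^1}\le C_{\mathrm{inv}}\,\lVert\text{RHS}\rVert_{\mathcal{W}_{u_0}^*}$. On $\mathcal{V}_d$ all norms are equivalent, and $\nabla u_{n,d}=0$.

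We then bound the three pieces of the right-hand side separately.

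\emph{The remainder $F_{n+2}$.} Lemmas~\ref{lem:bound1}, \ref{lem:bound2} and~\ref{lem:bound3}, applied at level $n+2$ exactly as in the proof of Lemma~\ref{lem:bound5}, give
\begin{equation*}
\lVert F_{n+2}\rVert\le \frac{\nu e^{\alpha n}}{(n+1)^p}\bigl(C_p v_{\min}^{-2}\mu^2+|\sigma|C_p' v_{\min}^{-2}\mu^2\nu^2+C_{\alpha,p}\bigr).
\end{equation*}
The prefactor is small once $p$, and then $\alpha$, are large.

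\emph{The frequency term $B(\omega_{n+1})$.} It is linear in $\omega_{n+1}$, so $\lVert B(\omega_{n+1})\rVert\le 2|\omega_1|v_{\min}^{-2}\bigl(\lVert u_0\rVert+|\sigma|\lVert u_0\rVert_{L^6}^3\bigr)|\omega_{n+1}|$. Lemma~\ref{lem:bound5} then bounds it by $\widetilde C_{\alpha,p}\,\mu\, e^{\alpha n}(n+1)^{-p}$ times a constant depending only on $u_0$ and $\omega_1$.

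\emph{The term $\mathcal{L}_{u_0}u_{n,0}$.} By the Poincaré--Wirtinger inequality on $\mathcal{V}_0$ and the continuity of $\mathcal{L}_{u_0}$, we have $\lVert\mathcal{L}_{u_0}u_{n,0}\rVert\le C_{\mathcal{L}_{u_0}}C_{PW}\lVert\nabla u_{n,0}\rVert_{L^2}$. Lemma~\ref{lem:bound4} bounds this by $\widetilde C_{\alpha,p}\,\nu e^{\alpha n}(n+1)^{-p}$.

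Summing the three bounds gives the claim with
\begin{equation*}
\widetilde C^{\mathrm{new}}_{\alpha,p}=C_{\mathrm{inv}}\Bigl(\text{the sum of the prefactors above, with the }\mu\text{-term multiplied by }\mu/\nu\Bigr),
\end{equation*}
which still satisfies $\lim_{\alpha\to\infty}\lim_{p\to\infty}\widetilde C^{\mathrm{new}}_{\alpha,p}=0$. This uses that $\mu,\nu$ are fixed before $\alpha,p$ are sent to infinity, and that $C_{\mathrm{inv}}$ does not depend on $\alpha$, $p$ or $n$.

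The main obstacle is the term $\mathcal{L}_{u_0}u_{n,0}$ together with the consistency of the level bookkeeping. One must check that testing at level $n+2$ involves $u_{n,0}$ only through $\mathcal{L}_{u_0}$, and does not bring in unknown $u_{n+1}$ or $u_{n+2}$ components; these vanish against $\mathcal{V}_d$ because their gradient terms are annihilated. One must also check that the smallness of $\nabla u_{n,0}$ from Lemma~\ref{lem:bound4} carries no $n$-dependent loss: the bound has the same $e^{\alpha n}(n+1)^{-p}$ profile, so no factor such as $\bigl(\frac{n+1}{n-1}\bigr)^p$ arises. If the Step 3 equation is instead written with $u_{n-2}$, as in the displayed form \eqref{eq:cascade2}, the extra term is handled exactly as in the proof of Lemma~\ref{lem:bound4}. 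There it carries the factor $\bigl(\frac{n+1}{n-1}\bigr)^p e^{-2\alpha}$, which is bounded by $3^pe^{-2\alpha}$ for $n\ge2$ and is made small by choosing $\alpha$ large after $p$.
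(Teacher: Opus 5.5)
Your proposal is correct and is essentially the paper's proof. The paper writes the same Step~3 equation for $u_{n,d}\in\mathcal{W}_{u_0}$, with right-hand side built from $F_{n+2}$, $B(\omega_{n+1})$ and $\mathcal{L}_{u_0}u_{n,0}$ (its signs differ from yours, which does not affect the estimate), inverts $\mathcal{L}_{u_0}$ via Assumption~\ref{as:invertability}, and states only that the bound follows ``analogously to Lemma~\ref{lem:bound5}'' — exactly the term-by-term estimate you carry out with Lemmas~\ref{lem:bound1}--\ref{lem:bound5}.

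Your operative choice of fixing $p$ first and then taking $\alpha$ large depending on $p$ is the right one, since the $2^{(l-1)(p+2)}$ factors in Lemma~\ref{lem:bound3} and the $3^p e^{-2\alpha}$ term from Lemma~\ref{lem:bound4} force it. So the double limit is really $\lim_{p\to\infty}\lim_{\alpha\to\infty}$, not the order as written in the statement.
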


\begin{proof}
    Recall that by Assumption~\ref{as:invertability}, the operator $\mathcal{L}_{u_0}$ is invertible on $\mathcal{W}_{u_0}$.
    Hence, there exists a unique solution $u_{n,d} \in \mathcal{W}_{u_0}$ to
    \begin{equation}
        -\langle \mathcal{L}_{u_0} u_{n,d}, w \rangle
        = \langle F_{n+2}, w \rangle
          + \langle B(\omega_{n+1}), w \rangle
          + \langle \mathcal{L}_{u_0} u_{n,0}, w \rangle.
    \end{equation}
    The upper bound now follows from an argument analogous to that in
    Lemma~\ref{lem:bound5}.
\end{proof}

\section{Proof of Theorem~\ref{thm:converse}}

Let $\{(\omega^\delta, u^\delta)\}_{\delta > 0} \subset \mathbb{C} \times H^1(D)$
be a family of solutions as in the statement of Theorem~\ref{thm:converse}.
By hypothesis, there exist $C_\omega, M > 0$ such that
$|\omega^\delta| \leq C_\omega \delta^{1/2}$ and
\begin{equation}\label{eq:uniform_assumption}
    \lVert u^\delta \rVert_{L^2(D)}
    + \lVert u^\delta \rVert_{L^4(D)} \leq M.
\end{equation}

\begin{lemma}\label{lem:gradientbound}
    Under the above assumptions, for all $\delta$ sufficiently small,
    \begin{equation}
        \lVert \nabla u^\delta \rVert_{L^2(D)}^2
        \leq \frac{C_\omega^2}{v_{\min}^2}
        \bigl( \lVert u^\delta \rVert_{L^2}^2
          + \lVert u^\delta \rVert_{L^4}^4 \bigr) \delta.
    \end{equation}
\end{lemma}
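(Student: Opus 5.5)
The plan is to test the weak formulation \eqref{eq:weak_formulation} with $w = u^\delta$ itself and take real parts. This gives the energy identity
\begin{equation*}
    \lVert \nabla u^\delta \rVert_{L^2(D)}^2
    = \sum_{i\in\mathcal{I}} \int_{D_i} \frac{(\omega^\delta)^2}{v_i^2}\bigl(\lvert u^\delta\rvert^2 + \sigma \lvert u^\delta\rvert^4\bigr)\,dx
    + \delta \langle \mathcal{T}^{\omega^\delta/v} u^\delta, u^\delta\rangle_{\partial D}.
\end{equation*}
The two terms on the right are then handled separately.

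\textbf{Volume term.} This is bounded directly. Since $\lvert\omega^\delta\rvert \le C_\omega \delta^{1/2}$ and $v_i \ge v_{\min}$, its modulus is at most $C_\omega^2 v_{\min}^{-2}\delta\bigl(\lVert u^\delta\rVert_{L^2}^2 + \lvert\sigma\rvert\lVert u^\delta\rVert_{L^4}^4\bigr)$. The factor $\lvert\sigma\rvert$ can either be absorbed by normalising, or carried along; the statement as written implicitly takes $\lvert\sigma\rvert\le 1$ or absorbs it into the constant. This already produces exactly the right-hand side of the claimed inequality.

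\textbf{Boundary term.} It remains to show that $\delta\,\mathrm{Re}\langle \mathcal{T}^{\omega^\delta/v} u^\delta, u^\delta\rangle$ does not spoil the estimate.

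\begin{itemize}
    \item At $\omega=0$ it has a good sign: $\langle \mathcal{T}_0 f, f\rangle = -\int_{\mathbb{R}^3\setminus\overline D}\lvert\nabla \tilde f\rvert^2 \le 0$ for the decaying harmonic extension $\tilde f$, because $\nu$ points outward from $D$. So the leading part of the boundary term can simply be dropped.
    \item For $\omega\neq 0$, use the analytic expansion of Proposition~\ref{prop:dtn_analytic} together with Lemma~\ref{lem:DtN_decay} to write $\mathcal{T}^{\omega/v} = \mathcal{T}_0 + \mathcal{R}(\omega)$ with $\lVert\mathcal{R}(\omega)\rVert \le C\lvert\omega\rvert$ for small $\omega$. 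The correction then contributes at most $C\delta^{3/2}\lVert u^\delta\rVert_{H^1}^2$, by the trace theorem.
\end{itemize}

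\textbf{Absorption.} Bound $\lVert u^\delta\rVert_{H^1}^2 \le \lVert u^\delta\rVert_{L^2}^2 + \lVert\nabla u^\delta\rVert_{L^2}^2$. For $\delta$ small, the resulting $C\delta^{3/2}\lVert\nabla u^\delta\rVert^2$ piece is moved to the left-hand side. The remaining $C\delta^{3/2}\lVert u^\delta\rVert_{L^2}^2$ piece is of higher order than the main term.

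\textbf{Main obstacle.} The absorption gives the inequality with constant $C_\omega^2 v_{\min}^{-2}(1+O(\delta^{1/2}))$, not exactly $C_\omega^2 v_{\min}^{-2}$. To get the sharp constant I would argue more carefully on the sign of the correction: for real $\omega$ the imaginary (radiative) part of $\mathcal{T}^\omega$ does not enter the real part of the identity. I would also use the $\mathcal{T}_1$ term, which in three dimensions is a rank-one map proportional to $i\omega$ and hence purely imaginary against real data. This should make the $O(\omega)$ correction vanish in the real part, leaving $O(\omega^2)$ terms. Those can be absorbed using the slack from the strictly negative $\mathcal{T}_0$ contribution on $\mathcal{V}_0$, together with $\lVert u^\delta\rVert_{L^2}\le M$.

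Failing that, I would accept a slightly enlarged constant by replacing $C_\omega$ with any $C_\omega'>C_\omega$, which is harmless for the subsequent use in proving Theorem~\ref{thm:converse}. What that proof needs is only $\lVert\nabla u^\delta\rVert_{L^2}=O(\delta^{1/2})$, so that $u^\delta - \Pi_d u^\delta\to 0$ in $H^1$ by Poincar\'e--Wirtinger.
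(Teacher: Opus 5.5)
Your argument is correct and follows the paper's skeleton: test \eqref{eq:weak_formulation} with $w=u^\delta$, bound the volume term by $|\omega^\delta|^2 v_{\min}^{-2}$, and then dispose of the boundary term. Where you differ is in that last step.

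\textbf{Boundary term.} The paper asserts that $\operatorname{Re}\langle \mathcal{T}^{\omega^\delta/v}\cdot,\cdot\rangle_{\partial D}$ stays nonpositive for small $\delta$ and drops the whole term. This yields exactly the constant $C_\omega^2 v_{\min}^{-2}$. You instead drop only the $\mathcal{T}_0$ part and absorb the $O(|\omega^\delta|)$ remainder into the left-hand side. That uses nothing beyond $\langle \mathcal{T}_0 f,f\rangle\le 0$ and $\lVert \mathcal{T}^{\omega/v}-\mathcal{T}_0\rVert\le C|\omega|$, at the cost of a factor $1+O(\delta^{1/2})$ in the constant. This is harmless: $C_\omega$ is only an existential constant in the hypothesis, and the later lemmas use the bound only in the form $\lVert\nabla u^\delta\rVert_{L^2}^2\le C\delta(\cdots)$. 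Your fallback with $C_\omega'>C_\omega$ is therefore fully adequate.

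\textbf{Sharp constant.} If you want the sharp constant, you do not need the structure of $\mathcal{T}_1$ or a splitting over $\mathcal{V}_0$. The missing ingredient is \emph{coercivity} of $-\mathcal{T}_0$ on the whole trace space. In three dimensions, Hardy's inequality together with the trace theorem gives
$-\langle \mathcal{T}_0 f,f\rangle=\int_{\mathbb{R}^3\setminus\overline{D}}|\nabla\tilde f|^2\,dx\ge c\lVert f\rVert_{H^{1/2}(\partial D)}^2$.
Hence $\operatorname{Re}\langle\mathcal{T}^{\omega/v}f,f\rangle\le(-c+C|\omega|)\lVert f\rVert_{H^{1/2}(\partial D)}^2\le 0$ for $\delta$ small. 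This is exactly what the paper's one-line argument needs. Mere semi-definiteness, which is how the paper phrases it, is not stable under small perturbations.

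Your $\mathcal{T}_1$ idea would also break down for complex $\omega^\delta$, which the theorem allows: the real part of $\omega\langle\mathcal{T}_1 f,f\rangle$ then need not vanish.

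Finally, your remark about the missing factor $|\sigma|$ in the volume term is correct. The paper's proof drops it silently in the same way, so either $|\sigma|\le 1$ is implicit or $|\sigma|$ is absorbed into $C_\omega$.
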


\begin{proof}
    Testing the weak formulation~\eqref{eq:weak_formulation} with
    $w = u^\delta$ yields
    \begin{equation}
        \lVert \nabla u^\delta \rVert_{L^2}^2
        = \sum_{i \in \mathcal{I}} \frac{|\omega^\delta|^2}{v_i^2}
          \int_{D_i}
          \bigl( |u^\delta|^2 + \sigma |u^\delta|^4 \bigr) dx
        + \delta \operatorname{Re}
          \langle \mathcal{T}^{\omega^\delta/v} u^\delta,
          u^\delta \rangle_{\partial D}.
    \end{equation}
    Since $\mathcal{T}_0$ is negative semi-definite and
    $\mathcal{T}^{\omega^\delta/v} \to \mathcal{T}_0$ in operator norm
    as $\delta \to 0$, the quadratic form
    $\operatorname{Re}\langle \mathcal{T}^{\omega^\delta/v} \cdot,
    \cdot \rangle_{\partial D}$ remains negative semi-definite for $\delta$ sufficiently small.
    Dropping this nonpositive term and applying
    $|\omega^\delta|^2 \leq C_\omega^2 \delta$ and
    $v_i \geq v_{\min}$,
    \begin{equation}
        \lVert \nabla u^\delta \rVert_{L^2}^2
        \leq \frac{C_\omega^2 \delta}{v_{\min}^2}
        \bigl( \lVert u^\delta \rVert_{L^2}^2
          + \lVert u^\delta \rVert_{L^4}^4 \bigr). \qedhere
    \end{equation}
\end{proof}

\begin{lemma}\label{lem:approximate_discrete}
    Let $u_0^\delta := \Pi_d u^\delta \in \mathcal{V}_d$ and
    $\omega_1^\delta := \delta^{-1/2}\omega^\delta$ be as in
    Theorem~\ref{thm:converse}. Then,
    $\mathbf{a}^\delta := (u_0^\delta|_{D_i})_{i \in \mathcal{I}}$
    satisfies
    \begin{equation}
        \bigl\lVert
          (\mathcal{C} - (\omega_1^\delta)^2 \mathbf{M}) \mathbf{a}^\delta
          - \sigma (\omega_1^\delta)^2 \mathbf{N}(\mathbf{a}^\delta)
        \bigr\rVert_{\ell^2}
        \leq C
        \bigl( \lVert u^\delta \rVert_{L^2}^2
          + \lVert u^\delta \rVert_{L^4}^4 \bigr)^{1/2}
        \delta^{1/2},
    \end{equation}
    for some $C > 0$ independent of $\delta$.
\end{lemma}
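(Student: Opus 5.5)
Test the weak formulation \eqref{eq:weak_formulation}, at level $\delta$, against piecewise-constant test functions $w = \mathbf{1}_{D_j}$ (extended as constants on $D_j$), for each $j$. Since $\nabla w = 0$, the gradient term drops and, after dividing by $\delta$ and writing $(\omega^\delta)^2 = \delta(\omega_1^\delta)^2$, we get the exact identity
\begin{equation*}
    \frac{(\omega_1^\delta)^2}{v_j^2}\int_{D_j}\bigl(u^\delta + \sigma\lvert u^\delta\rvert^2 u^\delta\bigr)\,dx + \langle \mathcal{T}^{\omega^\delta/v}u^\delta, \mathbf{1}_{D_j}\rangle_{\partial D} = 0 .
\end{equation*}
The plan is to replace $u^\delta$ by $u_0^\delta = \Pi_d u^\delta$ in each term and $\mathcal{T}^{\omega^\delta/v}$ by $\mathcal{T}_0$, and to bound the errors by $\mathcal{O}(\delta^{1/2})$. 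With these replacements, the $j$-th component of the left-hand side is exactly $-\bigl[(\mathcal{C}-(\omega_1^\delta)^2\mathbf{M})\mathbf{a}^\delta - \sigma(\omega_1^\delta)^2\mathbf{N}(\mathbf{a}^\delta)\bigr]_j$, up to sign. This is the same computation as the $n=2$ cascade step in Subsection~\ref{ssec:cascade}.

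Write $u^\delta = u_0^\delta + \tilde u^\delta$ with $\tilde u^\delta \in \mathcal{V}_0$. By the Poincar\'e--Wirtinger inequality on each $D_i$ and Lemma~\ref{lem:gradientbound},
\begin{equation*}
    \lVert \tilde u^\delta\rVert_{H^1} \le C\lVert\nabla u^\delta\rVert_{L^2} \le C\bigl(\lVert u^\delta\rVert_{L^2}^2+\lVert u^\delta\rVert_{L^4}^4\bigr)^{1/2}\delta^{1/2}.
\end{equation*}
The error terms are then handled one by one.

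\textbf{Linear volume term.} Its error vanishes identically, since $\int_{D_j}\tilde u^\delta = 0$.

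\textbf{Boundary term.} Split
\begin{equation*}
    \langle \mathcal{T}^{\omega^\delta/v}u^\delta,\mathbf{1}_{D_j}\rangle = \langle \mathcal{T}_0 u_0^\delta,\mathbf{1}_{D_j}\rangle + \langle \mathcal{T}_0\tilde u^\delta,\mathbf{1}_{D_j}\rangle + \langle(\mathcal{T}^{\omega^\delta/v}-\mathcal{T}_0)u^\delta,\mathbf{1}_{D_j}\rangle .
\end{equation*}
The middle term is bounded via the trace theorem by $C\lVert\tilde u^\delta\rVert_{H^1}$. For the last term, analyticity (Propositions~\ref{prop:dtn_analytic}/\ref{prop:dtn_analytic_periodic}) gives $\lVert\mathcal{T}^{\omega/v}-\mathcal{T}_0\rVert\le C|\omega| \le C\delta^{1/2}$. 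Moreover, $\lVert u^\delta\rVert_{H^1}$ is bounded by $M$ plus the gradient bound, so this term is also $\mathcal{O}(\delta^{1/2})$ with the stated constant.

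\textbf{Cubic term.} Since $|\omega_1^\delta|\le C_\omega$, it suffices to bound $\int_{D_j}\bigl(\lvert u^\delta\rvert^2u^\delta - \lvert u_0^\delta\rvert^2u_0^\delta\bigr)$. The pointwise estimate $\bigl|\lvert a\rvert^2a-\lvert b\rvert^2 b\bigr|\le C\lvert a-b\rvert\bigl(\lvert a\rvert^2+\lvert b\rvert^2\bigr)$ and H\"older with exponents $(4,4/3)$ bound this by
\begin{equation*}
    C\lVert\tilde u^\delta\rVert_{L^4}\bigl(\lVert u^\delta\rVert_{L^4}^2+\lVert u_0^\delta\rVert_{L^4}^2\bigr).
\end{equation*}
By Sobolev embedding in three dimensions, $\lVert\tilde u^\delta\rVert_{L^4}\le C\lVert\tilde u^\delta\rVert_{H^1}$. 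Also $\lVert u_0^\delta\rVert_{L^4}\le C\lVert u^\delta\rVert_{L^2}$, since the projection is finite-dimensional.

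Summing over $j$ gives the $\ell^2$ bound.

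The main obstacle is bookkeeping rather than any single estimate: the right-hand side must keep the precise form $C(\lVert u^\delta\rVert_{L^2}^2+\lVert u^\delta\rVert_{L^4}^4)^{1/2}\delta^{1/2}$. Several errors carry extra factors $\lVert u^\delta\rVert_{L^4}^2$ or $\lVert u^\delta\rVert_{H^1}$, and these must be absorbed into $C$ using the uniform bound $M$ from \eqref{eq:uniform_assumption}. The DtN-difference term must also be handled carefully. Its factor $\lVert u^\delta\rVert_{H^1}$ is bounded by $\lVert u^\delta\rVert_{L^2}$ plus the gradient, and $\lVert u^\delta\rVert_{L^2}$ is itself controlled by the square root of the bracketed quantity. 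This makes the final constant depend only on $M$, $C_\omega$, $v_{\min}$ and the geometry, and hence not on $\delta$.
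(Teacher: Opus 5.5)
Your proposal is correct and follows essentially the same route as the paper. Both arguments test \eqref{eq:weak_formulation} against resonator-wise constants, split $u^\delta = u_0^\delta + \tilde u^\delta$, and control $\tilde u^\delta$ via Poincar\'e--Wirtinger and Lemma~\ref{lem:gradientbound}. Both then treat the (exactly cancelling) linear term, the cubic term and the three boundary pieces in the same way. The only cosmetic difference is that the paper tests against a general unit vector $\mathbf{w}\in\ell^2(\mathcal{I})$ by duality, whereas you sum over indicators, which costs a harmless factor $\sqrt{\lvert\mathcal{I}\rvert}$ for finite $\mathcal{I}$.

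Your operator bound $\lVert \mathcal{T}^{\omega^\delta/v}-\mathcal{T}_0\rVert \le C\delta^{1/2}$ is actually the accurate one in three dimensions, because there the DtN expansion has a nonvanishing linear term $\mathcal{T}_1$. The paper records $\delta^2$ for this piece, but $\delta^{3/2}$ before dividing by $\delta$ is what holds, and it is all that is needed.
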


\begin{proof}
    It suffices to show that
    \begin{equation}\label{eq:approx_dual}
        \bigl\lvert \bigl\langle
          (\mathcal{C} - (\omega_1^\delta)^2 \mathbf{M}) \mathbf{a}^\delta
          + \sigma (\omega_1^\delta)^2 \mathbf{N}(\mathbf{a}^\delta),
          \mathbf{w}
        \bigr\rangle \bigr\rvert
        \leq C
        \bigl( \lVert u^\delta \rVert_{L^2}^2
          + \lVert u^\delta \rVert_{L^4}^4 \bigr)^{1/2}
        \delta^{1/2}
    \end{equation}
    for all $\mathbf{w} \in \ell^2(\mathcal{I})$ with
    $\lVert \mathbf{w} \rVert_{\ell^2} = 1$. We identify $\mathbf{w}$
    with the element $w \in \mathcal{V}_d$ defined by $w|_{D_i} = w_i$,
    and write $\tilde{u}^\delta := u^\delta - u_0^\delta$. By the
    Poincar\'e--Wirtinger inequality on each $D_i$ and
    Lemma~\ref{lem:gradientbound},
    \begin{equation}\label{eq:tilde_bound}
        \lVert \tilde{u}^\delta \rVert_{L^2(D)}
        \leq C_P \lVert \nabla u^\delta \rVert_{L^2(D)}
        \leq \frac{C_P C_\omega}{v_{\min}}
        \bigl( \lVert u^\delta \rVert_{L^2}^2
          + \lVert u^\delta \rVert_{L^4}^4 \bigr)^{1/2}
        \delta^{1/2}.
    \end{equation}
    Testing the weak formulation~\eqref{eq:weak_formulation} against $w$
    and using $\nabla w = 0$ gives
    \begin{equation}\label{eq:tested_weak}
        \sum_{i \in \mathcal{I}} \frac{|\omega^\delta|^2}{v_i^2}
        \int_{D_i}
        \bigl( u^\delta + \sigma |u^\delta|^2 u^\delta \bigr)
        \overline{w}_i \, dx
        + \delta \langle \mathcal{T}^{\omega^\delta/v} u^\delta,
          w \rangle_{\partial D} = 0.
    \end{equation}
    We analyse each term, replacing $u^\delta$ by $u_0^\delta$ and
    $\mathcal{T}^{\omega^\delta/v}$ by $\mathcal{T}_0$. Throughout, $C$
    denotes a constant independent of $\delta$ that may change from
    line to line.

    For the linear volume term, since $\tilde{u}^\delta$ has zero mean
    on each $D_i$ and $w_i$ is constant on $D_i$,
    \begin{equation}
        \sum_{i \in \mathcal{I}} \frac{|\omega^\delta|^2}{v_i^2}
        \int_{D_i} u^\delta \overline{w}_i \, dx
        = \delta(\omega_1^\delta)^2
          \langle \mathbf{M} \mathbf{a}^\delta, \mathbf{w} \rangle.
    \end{equation}

    For the cubic volume term, the estimate
    $\bigl||a|^2 a - |b|^2 b\bigr| \leq 3(|a|^2 + |b|^2)|a - b|$
    and H\"older's inequality give
    \begin{equation}
        \begin{aligned}
            &\biggl\lvert
            \sum_{i \in \mathcal{I}} \frac{|\omega^\delta|^2}{v_i^2}
            \int_{D_i}
            \bigl(|u^\delta|^2 u^\delta
              - |u_0^\delta|^2 u_0^\delta\bigr)
            \overline{w}_i \, dx \biggr\rvert \\
            &\qquad\leq \frac{C_\omega^2 \delta}{v_{\min}^2}
            \bigl(
              \lVert u^\delta \rVert_{L^4}^2
              + \lVert u_0^\delta \rVert_{L^4}^2
            \bigr)
            \lVert \tilde{u}^\delta \rVert_{L^4}
            \lVert w \rVert_{L^4} \\
            &\qquad\leq C \lVert u^\delta \rVert_{L^4}^2
            \bigl( \lVert u^\delta \rVert_{L^2}^2
              + \lVert u^\delta \rVert_{L^4}^4 \bigr)^{1/2}
            \delta^{3/2},
        \end{aligned}
    \end{equation}
    where we used $\lVert u_0^\delta \rVert_{L^4}
    \leq \lVert u^\delta \rVert_{L^4}$,
    $\lVert \tilde{u}^\delta \rVert_{L^4}
    \leq \kappa_S \lVert \tilde{u}^\delta \rVert_{H^1}$,
    and~\eqref{eq:tilde_bound}. The leading-order contribution is
    $\delta \sigma (\omega_1^\delta)^2
    \langle \mathbf{N}(\mathbf{a}^\delta), \mathbf{w} \rangle$.

    For the boundary term, we write
    \begin{equation}
        \delta \langle \mathcal{T}^{\omega^\delta/v} u^\delta,
        w \rangle_{\partial D}
        = \delta \langle \mathcal{T}_0 u_0^\delta,
          w \rangle_{\partial D}
        + \delta \langle \mathcal{T}_0 \tilde{u}^\delta,
          w \rangle_{\partial D}
        + \delta \langle (\mathcal{T}^{\omega^\delta/v} - \mathcal{T}_0)
          u^\delta, w \rangle_{\partial D}.
    \end{equation}
    The first term equals
    $-\delta \langle \mathcal{C} \mathbf{a}^\delta,
    \mathbf{w} \rangle$. The second is bounded by
    \begin{equation}
        \delta \lVert \mathcal{T}_0 \rVert
        \lVert \tilde{u}^\delta \rVert_{H^1}
        \lVert w \rVert_{H^1}
        \leq C
        \bigl( \lVert u^\delta \rVert_{L^2}^2
          + \lVert u^\delta \rVert_{L^4}^4 \bigr)^{1/2}
        \delta^{3/2},
    \end{equation}
    and the third by
    \begin{equation}
        \delta \lVert \mathcal{T}^{\omega^\delta/v} - \mathcal{T}_0 \rVert
        \lVert u^\delta \rVert_{H^1}
        \lVert w \rVert_{H^1}
        \leq C
        \bigl( \lVert u^\delta \rVert_{L^2}^2
          + \lVert u^\delta \rVert_{L^4}^4 \bigr)^{1/2}
        \delta^2.
    \end{equation}

    Substituting into~\eqref{eq:tested_weak} and collecting the
    leading-order terms,
    \begin{equation}
        \delta \bigl\langle
          -\mathcal{C} \mathbf{a}^\delta
          + (\omega_1^\delta)^2 \mathbf{M} \mathbf{a}^\delta
          + \sigma (\omega_1^\delta)^2
            \mathbf{N}(\mathbf{a}^\delta),
          \mathbf{w}
        \bigr\rangle
        = O\bigl(
          (\lVert u^\delta \rVert_{L^2}^2
            + \lVert u^\delta \rVert_{L^4}^4)^{1/2}
          \delta^{3/2}
        \bigr).
    \end{equation}
    Dividing by $\delta$ yields~\eqref{eq:approx_dual}.
\end{proof}


The final step is to pass to a nontrivial limit
$(\omega_1, \mathbf{a})$. The argument is inspired
by~\cite{pankov2010gap}.

Suppose $\omega_1^2 \notin \sigma(\mathbf{M}^{-1}\mathcal{C})$.
Since $\mathbf{M}^{-1}\mathcal{C}$ is self-adjoint with respect to
the $\mathbf{M}$-weighted inner product, the spectral theorem gives
\begin{equation}
    \lVert (\mathcal{C} - (\omega_1^\delta)^2 \mathbf{M})
      \mathbf{v} \rVert_{\ell^2}
    \geq c \operatorname{dist}\bigl((\omega_1^\delta)^2,\,
      \sigma(\mathbf{M}^{-1}\mathcal{C})\bigr)
    \lVert \mathbf{v} \rVert_{\ell^2},
\end{equation}
where $c > 0$ depends only on $\mathbf{M}$. Combining this with
Lemma~\ref{lem:approximate_discrete} and the triangle inequality, we
obtain
\begin{equation}\label{eq:lowerbound1}
    c \operatorname{dist}\bigl((\omega_1^\delta)^2,\,
      \sigma(\mathbf{M}^{-1}\mathcal{C})\bigr)
    \lVert \mathbf{a}^\delta \rVert_{\ell^2}
    \leq \lVert (\mathcal{C} - (\omega_1^\delta)^2 \mathbf{M})
      \mathbf{a}^\delta \rVert_{\ell^2}
    \leq C \bigl(
      \lVert u^\delta \rVert_{L^2}^2
      + \lVert u^\delta \rVert_{L^4}^4
    \bigr)^{1/2} \delta^{1/2}
    + |\omega_1^\delta|^2
      \lVert \mathbf{N}(\mathbf{a}^\delta) \rVert_{\ell^2}.
\end{equation}
Since $\mathbf{N}(\mathbf{a})_i = |D_i| v_i^{-2} |a_i|^2 a_i$, we
have
$\lVert \mathbf{N}(\mathbf{a}^\delta) \rVert_{\ell^2}
\leq C \lVert \mathbf{a}^\delta \rVert_{\ell^\infty}^2
\lVert \mathbf{a}^\delta \rVert_{\ell^2}$.
To control the first term on the right-hand side
of~\eqref{eq:lowerbound1} in terms of the discrete unknowns, we
require the following norm equivalence.

\begin{lemma}\label{lem:normequivalence}
    Let $\mathbf{a}^\delta$ be defined as in
    Lemma~\ref{lem:approximate_discrete} and assume the hypotheses of
    Theorem~\ref{thm:converse} hold. Then there exists $C > 0$,
    independent of $\delta$, such that for $\delta$ sufficiently small,
    \begin{equation}
        \lVert u^\delta \rVert_{L^2}^2
        + \lVert u^\delta \rVert_{L^4}^4
        \leq C \bigl(
          \lVert \mathbf{a}^\delta \rVert_{\ell^2}^2
          + \lVert \mathbf{a}^\delta \rVert_{\ell^2}^8
        \bigr).
    \end{equation}
\end{lemma}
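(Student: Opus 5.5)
The plan is to split $u^\delta = u_0^\delta + \tilde u^\delta$ as in the proof of Lemma~\ref{lem:approximate_discrete}, bound each piece in $L^2$ and $L^4$, and then absorb the contribution of $\tilde u^\delta$ into the left-hand side using the factor $\delta^{1/2}$ supplied by Lemma~\ref{lem:gradientbound}. Write $Q^\delta := \lVert u^\delta \rVert_{L^2}^2 + \lVert u^\delta \rVert_{L^4}^4$.

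For the constant part, we have $u_0^\delta|_{D_i} = a_i^\delta$, so
\begin{equation}
    \lVert u_0^\delta\rVert_{L^2}^2 = \sum_i |D_i|\,|a_i^\delta|^2 \le C\lVert \mathbf a^\delta\rVert_{\ell^2}^2 ,
\end{equation}
and, since $\lVert \cdot\rVert_{\ell^4}\le \lVert\cdot\rVert_{\ell^2}$,
\begin{equation}
    \lVert u_0^\delta\rVert_{L^4}^4 = \sum_i |D_i|\,|a_i^\delta|^4 \le C\lVert \mathbf a^\delta\rVert_{\ell^2}^4 .
\end{equation}
For the fluctuation, the Poincar\'e--Wirtinger inequality and the Sobolev embedding $H^1(D_i)\hookrightarrow L^4(D_i)$ applied to mean-zero functions give $\lVert \tilde u^\delta\rVert_{L^2} + \lVert \tilde u^\delta\rVert_{L^4} \le C\lVert \nabla u^\delta\rVert_{L^2}$. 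By Lemma~\ref{lem:gradientbound} this is at most $C\delta^{1/2}(Q^\delta)^{1/2}$, which is exactly estimate \eqref{eq:tilde_bound} together with its $L^4$ analogue.

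Next, by the triangle inequality and $(x+y)^k \le 2^{k-1}(x^k+y^k)$,
\begin{equation}
    Q^\delta \le 2\lVert u_0^\delta\rVert_{L^2}^2 + 8\lVert u_0^\delta\rVert_{L^4}^4 + 2\lVert \tilde u^\delta\rVert_{L^2}^2 + 8\lVert \tilde u^\delta\rVert_{L^4}^4 .
\end{equation}
The quadratic fluctuation term is at most $C\delta Q^\delta$, and for $\delta$ small this can be absorbed into the left-hand side. The quartic fluctuation term is bounded by $C\delta^2 (Q^\delta)^2$. Here the a priori bound \eqref{eq:uniform_assumption} gives $Q^\delta\le M^2+M^4$, so this term is at most $C\delta^2(M^2+M^4)\,Q^\delta$, which can also be absorbed. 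This leaves
\begin{equation}
    Q^\delta \le C\bigl(\lVert \mathbf a^\delta\rVert_{\ell^2}^2 + \lVert \mathbf a^\delta\rVert_{\ell^2}^4\bigr).
\end{equation}
Finally, $x^4 \le x^2 + x^8$ for every $x\ge 0$: if $x\le1$ then $x^4\le x^2$, and if $x>1$ then $x^4\le x^8$. This yields the stated bound with exponent $8$, which is weaker than what the argument actually delivers.

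The main obstacle is the absorption step. Lemma~\ref{lem:gradientbound} controls $\nabla u^\delta$ by $Q^\delta$ itself, including its quartic part, so the fluctuation's $L^4$ contribution enters nonlinearly in $Q^\delta$. I would close this using the uniform bound \eqref{eq:uniform_assumption}, as above, rather than attempting a purely intrinsic estimate. Without that a priori bound one would instead need a continuity or bootstrap argument in $\delta$. Some care is also needed because the Sobolev and Poincar\'e constants must be uniform over the resonators; in the finite case this is automatic.
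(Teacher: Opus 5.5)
Your proposal is correct and follows essentially the same route as the paper: split $u^\delta=u_0^\delta+\tilde u^\delta$, control the fluctuation through Poincar\'e--Wirtinger, the Sobolev embedding and Lemma~\ref{lem:gradientbound}, and absorb for small $\delta$ using the a priori bound \eqref{eq:uniform_assumption}. The only difference is bookkeeping: you absorb directly into $Q^\delta$, whereas the paper first absorbs into $\lVert\nabla u^\delta\rVert_{L^2}^2$ to get $\lVert\nabla u^\delta\rVert_{L^2}^2\le C\delta(\lVert u_0^\delta\rVert_{L^2}^2+\lVert u_0^\delta\rVert_{L^4}^4)$ and then squares this bound, which is where its exponent $8$ comes from (you obtain the sharper exponent $4$).
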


\begin{proof}
    Write $u^\delta = u_0^\delta + \tilde{u}^\delta$. Since
    $\tilde{u}^\delta$ has zero mean on each $D_i$, the
    Poincar\'e--Wirtinger inequality gives
    \begin{equation}\label{eq:normequivalence1}
        \lVert u^\delta \rVert_{L^2}^2
        = \lVert u_0^\delta \rVert_{L^2}^2
          + \lVert \tilde{u}^\delta \rVert_{L^2}^2
        \leq \lVert u_0^\delta \rVert_{L^2}^2
          + C_P^2 \lVert \nabla u^\delta \rVert_{L^2}^2.
    \end{equation}
    For the $L^4$ term, the inequality
    $(a + b)^4 \leq 8(a^4 + b^4)$ and the Sobolev and Poincar\'e
    inequalities yield
    \begin{equation}\label{eq:normequivalence2}
        \lVert u^\delta \rVert_{L^4}^4
        \leq 8 \bigl(
          \lVert u_0^\delta \rVert_{L^4}^4
          + \lVert \tilde{u}^\delta \rVert_{L^4}^4
        \bigr)
        \leq 8 \bigl(
          \lVert u_0^\delta \rVert_{L^4}^4
          + \kappa_S^4 C_P^4
            \lVert \nabla u^\delta \rVert_{L^2}^4
        \bigr).
    \end{equation}
    Substituting~\eqref{eq:normequivalence1}
    and~\eqref{eq:normequivalence2} into
    Lemma~\ref{lem:gradientbound},
    \begin{equation}
        \lVert \nabla u^\delta \rVert_{L^2}^2
        \leq C \delta \bigl(
          \lVert u_0^\delta \rVert_{L^2}^2
          + \lVert \nabla u^\delta \rVert_{L^2}^2
          + \lVert u_0^\delta \rVert_{L^4}^4
          + \lVert \nabla u^\delta \rVert_{L^2}^4
        \bigr).
    \end{equation}
    By Lemma~\ref{lem:gradientbound} and the hypotheses of
    Theorem~\ref{thm:converse},
    $\lVert \nabla u^\delta \rVert_{L^2}$ is uniformly bounded in
    $\delta$. Hence, for $\delta$ sufficiently small,
    \begin{equation}
        C \delta \bigl(
          \lVert \nabla u^\delta \rVert_{L^2}^2
          + \lVert \nabla u^\delta \rVert_{L^2}^4
        \bigr)
        \leq \tfrac{1}{2}
          \lVert \nabla u^\delta \rVert_{L^2}^2,
    \end{equation}
    and absorbing into the left-hand side gives
    \begin{equation}\label{eq:gradient_in_a}
        \lVert \nabla u^\delta \rVert_{L^2}^2
        \leq C \delta \bigl(
          \lVert u_0^\delta \rVert_{L^2}^2
          + \lVert u_0^\delta \rVert_{L^4}^4
        \bigr).
    \end{equation}
    Substituting~\eqref{eq:gradient_in_a} back
    into~\eqref{eq:normequivalence1}
    and~\eqref{eq:normequivalence2}, and using
    $\lVert u_0^\delta \rVert_{L^2} \leq C
    \lVert \mathbf{a}^\delta \rVert_{\ell^2}$
    and
    $\lVert u_0^\delta \rVert_{L^4} \leq C
    \lVert \mathbf{a}^\delta \rVert_{\ell^4}
    \leq C \lVert \mathbf{a}^\delta \rVert_{\ell^2}$,
    we obtain
    \begin{equation}
        \lVert u^\delta \rVert_{L^2}^2
        + \lVert u^\delta \rVert_{L^4}^4
        \leq C \bigl(
          \lVert \mathbf{a}^\delta \rVert_{\ell^2}^2
          + \lVert \mathbf{a}^\delta \rVert_{\ell^2}^8
        \bigr).
    \end{equation}
\end{proof}



Substituting the upper bound of Lemma~\ref{lem:normequivalence} into
Lemma~\ref{lem:approximate_discrete} and applying the spectral lower
bound for $\mathcal{C} - (\omega_1^\delta)^2\mathbf{M}$, we obtain
\begin{equation}\label{eq:key_lower}
    \operatorname{dist}\bigl((\omega_1^\delta)^2,\,
      \sigma(\mathbf{M}^{-1}\mathcal{C})\bigr)
    \lVert \mathbf{a}^\delta \rVert_{\ell^2}
    \leq C \bigl(
      \lVert \mathbf{a}^\delta \rVert_{\ell^2}^2
      + \lVert \mathbf{a}^\delta \rVert_{\ell^2}^8
    \bigr)^{1/2} \delta^{1/2}
    + C \lVert \mathbf{a}^\delta \rVert_{\ell^\infty}^2
      \lVert \mathbf{a}^\delta \rVert_{\ell^2},
\end{equation}
where $\omega_1^\delta := \delta^{-1/2}\omega^\delta$.

Suppose $\lVert \mathbf{a}^\delta \rVert_{\ell^2} \to 0$ as
$\delta \to 0$. We may divide by $\lVert \mathbf{a}^\delta \rVert_{\ell^2}$ on both sides to find that $\operatorname{dist}((\omega_1^\delta)^2,
\sigma(\mathbf{M}^{-1}\mathcal{C})) \to 0$. Thus, $\omega_1^2 := \lim (\omega_1^\delta)^2
\in \sigma(\mathbf{M}^{-1}\mathcal{C})$.

On the other hand, if 
$\omega_1^2 \notin \sigma(\mathbf{M}^{-1}\mathcal{C})$,
there exists $c > 0$ such that
$\lVert \mathbf{a}^\delta \rVert_{\ell^2} \geq c$ for all $\delta$
sufficiently small. In particular,
$\lVert \mathbf{a}^\delta \rVert_{\ell^\infty}$ is also bounded from below.

Theorem~\ref{thm:converse} now follows by distinguishing two cases.

If $|\mathcal{I}| < \infty$, then $\mathbf{a}^\delta$ is a bounded
sequence in $\mathbb{C}^{|\mathcal{I}|}$ and $\omega_1^\delta$ is a
bounded sequence in $\mathbb{C}$. Upon passing to a subsequence, both
converge: $\omega_1^\delta \to \omega_1$ and
$a_i^\delta \to a_i$ for each $i \in \mathcal{I}$.
By continuity of the discrete nonlinear map, the limit
$(\mathbf{a}, \omega_1)$ solves~\eqref{eq:discrete_nonlinear}, and it
is nontrivial since $\lVert \mathbf{a} \rVert_{\ell^2} \geq c > 0$.

If $|\mathcal{I}| = \infty$, we assume that the system is periodic. By
translation invariance, we may shift each $\mathbf{a}^\delta$ so that
$\lVert \mathbf{a}^\delta \rVert_{\ell^\infty}$ is attained within
the first unit cell. The shifted sequence is once again bounded, and we may pass to a pointwise limit. Because $\lVert \mathbf{a}^\delta \rVert_{\ell^\infty}$ is attained in the first unit cell and is lower bounded, the resulting limit must be nonzero.

\section{Beyond the Subwavelength Regime}\label{sec:nonsubwavelength}
In this section, we study the applicability of the perturbative approach beyond the subwavelength regime. As we shall soon see, the nonlinearity in the non-subwavelength regime is qualitatively different from the subwavelength regime, which is why in this case we will have to contend ourselves with a weaker result. An upshot, however, of the analysis in this section is that the perturbative approach remains applicable for isolated eigenvalues of the capacitance matrix in the subwavelength and non-subwavelength regimes, hence giving asymptotic expansions to arbitrary order.

\subsection{The Frequency Dependent Capacitance Matrix}
This section serves to briefly summarise the result of \cite{ammari2026frequencydependentcapacitancematrixformulation}. In what follows, we will restrict ourselves to the setting where $\lvert \mathcal{I}\rvert <\infty$.

We now denote by $\sigma(-\Delta_N,D_i)$ the spectrum of the Neumann Laplacian on $D_i$ and further define
\begin{equation}
    \sigma(-\Delta_N,D) :=\bigcup_{i\in\mathcal{I}}v_i^2\sigma(-\Delta_N,D_i).
\end{equation}
Since each $D_i$ is compact, we find that $\sigma_N(-\Delta_N,D)$ is a countable sequence tending to infinity. Suppose now that $\omega_0^2\in\sigma(-\Delta_N,D)$. Define 
\begin{equation}
    \mathcal{J}(\omega_0) = \{i\in\mathcal{I}: w_0^2\in v_i^2\sigma(-\Delta_N,D_i)\}.
\end{equation}
Let $m_j(\omega_0)$ denote the multiplicity of $\dfrac{\omega_0^2}{v_j^2}$ in $\sigma(-\Delta_N, D_j)$. We now choose an orthonormal basis $\{u_{j,l}\}_{l=1}^{m_j}\in H^1(D_i)$. Furthermore, note that these eigenfunctions embed naturally in $H^1(D)$ by extension by zero on the other inclusions. We are now able to define the frequency dependent capacitance matrix.
\begin{definition}[Frequency Dependent Capacitance Matrix]
    Let $m(\omega_0) = \sum_{i\in\mathcal{I}}m_i(\omega_0)$. Then, the capacitance matrix $\mathcal{C}(\omega_0)$ is the $m\times m$ matrix defined by
    \begin{equation}
        \mathcal{C}(\omega_0)_{(i,k)(j,l)}=-\int_{D_i}\mathcal{T}^{\omega_0/v}[u_{j,l}]\overline{u_{i,k}}dx.
    \end{equation}
    The generalised frequency dependent capacitance matrix is further defined by
    \begin{equation}
        \mathcal{C}^{gen}(\omega_0)=\mathbf{M}^{-1}\mathcal{C}(\omega_0),
    \end{equation}
    where $\mathbf{M}$ is the diagonal matrix with $\mathbf{M}_{(i,k)(i,k)} = \dfrac{2\omega_0}{v_i^2}$.
\end{definition}
To  give just one example that the frequency dependent capacitance matrix behaves very differently from the standard capacitance matrix, compare the following Lemma \cite[Proposition 3.16]{ammari2026frequencydependentcapacitancematrixformulation} with Lemma \ref{lem:cap_properties}.
\begin{lemma}
    $\mathcal{C}(\omega_0)$ is complex symmetric.
\end{lemma}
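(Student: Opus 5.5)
We need to show that $\mathcal{C}(\omega_0)_{(i,k)(j,l)}=\mathcal{C}(\omega_0)_{(j,l)(i,k)}$ without conjugation, i.e.\ that the bilinear (not sesquilinear) pairing $(f,g)\mapsto\int_{\partial D}\mathcal{T}^{\omega_0/v}[f]\,g\,d\sigma$ is symmetric on $H^{1/2}(\partial D)$. Integration over $D_i$ in the definition is understood as over $\partial D_i$, since $\mathcal{T}^{\omega_0/v}$ produces boundary data.

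\textbf{Step 1: reduce to real eigenfunctions.} The Neumann eigenvalue problem on each $D_j$ is real. Hence the orthonormal basis $\{u_{j,l}\}$ may be chosen real-valued, so $\overline{u_{i,k}}=u_{i,k}$. The entry then becomes $-\int_{\partial D}\mathcal{T}^{\omega_0/v}[u_{j,l}]\,u_{i,k}$. If a complex basis is used, the matrix changes by a unitary congruence, and complex symmetry holds with respect to the real basis; I would state the result for the real choice.

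\textbf{Step 2: Green's identity in the exterior.} Let $U_f$ and $U_g$ be the outgoing solutions of $\Delta U+k^2U=0$ in $\mathbb{R}^3\setminus\overline D$ with traces $f$ and $g$, where $k=\omega_0/v$. I would apply Green's second identity, without conjugation, on $B_R\setminus\overline D$:
\begin{equation*}
\int_{\partial B_R}\left(U_g\,\partial_r U_f-U_f\,\partial_r U_g\right)ds-\int_{\partial D}\left(g\,\mathcal{T}^{k}f-f\,\mathcal{T}^{k}g\right)d\sigma=0,
\end{equation*}
since $U_g\Delta U_f-U_f\Delta U_g=0$. The sign convention for $\nu$ is irrelevant here, because it is the same for both terms.

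\textbf{Step 3: vanishing of the sphere term.} This is the main point. Both fields are outgoing, so $\partial_rU=ikU+o(r^{-1})$ and $U=O(r^{-1})$. The integrand is therefore $U_g(ikU_f)-U_f(ikU_g)+o(r^{-2})=o(r^{-2})$, and the integral over $\partial B_R$ tends to $0$. The cancellation relies on the \emph{bilinear} pairing: with conjugation the two terms would add, producing $2ik\int|U|^2$, which is exactly why $\mathcal{C}(\omega_0)$ is not Hermitian.

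\textbf{Conclusion and caveat.} Letting $R\to\infty$ gives $\int_{\partial D} g\,\mathcal{T}^k f=\int_{\partial D} f\,\mathcal{T}^k g$, and taking $f=u_{j,l}$, $g=u_{i,k}$ yields the symmetry. For complex $\omega_0$ (resonances), the same identity follows by analytic continuation, using Proposition \ref{prop:dtn_analytic}. This requires that $\omega_0^2$ is not an exterior Dirichlet resonance, so that $\mathcal{T}^{\omega_0/v}$ is well defined, which is implicit in the definition.
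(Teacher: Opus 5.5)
Your argument is correct. The paper does not actually prove this lemma: it imports it from Proposition~3.16 of the companion work on the frequency-dependent capacitance matrix. So your proposal is a self-contained replacement rather than a reconstruction of an argument in the text.

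You combine two ingredients:
\begin{itemize}
\item a real-valued orthonormal basis of each Neumann eigenspace, which exists because the Neumann Laplacian commutes with complex conjugation;
\item the reciprocity identity $\int_{\partial D} g\,\mathcal{T}^{k}f=\int_{\partial D} f\,\mathcal{T}^{k}g$ for the bilinear pairing. This follows from Green's second identity on $B_R\setminus\overline D$, with the boundary terms read as $H^{-1/2}$--$H^{1/2}$ dualities and the Sommerfeld condition removing the sphere term.
\end{itemize}
These are exactly the ingredients needed.

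Making them explicit gives two things the bare citation hides. First, the statement is basis-dependent. Under a unitary change of orthonormal basis, the sesquilinearly defined matrix transforms as $U^{*}\mathcal{C}(\omega_0)U$, and this does not preserve complex symmetry. The paper's ``choose an orthonormal basis'' should therefore read ``choose a real orthonormal basis'', and your caveat in Step~1 is a genuine sharpening of the statement. Second, your argument explains why $\mathcal{C}(\omega_0)$ is not Hermitian: the conjugated pairing keeps the outgoing flux term at infinity. That is precisely the contrast with Lemma~\ref{lem:cap_properties} that the paper points to.

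The analytic-continuation remark at the end is unnecessary. Since $\omega_0^2\in\sigma(-\Delta_N,D)$ is nonnegative, $\omega_0$ is real. Your real-frequency argument therefore already covers every case in which $\mathcal{T}^{\omega_0/v}$, and hence the matrix, is defined.
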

\begin{theorem}\label{thm:fundamental_highfreq}
    The resonant frequencies of \ref{eq:linear_problem} satisfy
    \begin{equation}
        \omega(\delta) = \omega_0 + \delta \omega_1 +\mathcal{O}(\delta^2),
    \end{equation}
    where $\omega_0^2\in\sigma(-\Delta_N,D)$ and $\omega_1$ is an eigenvalue of the generalised frequency dependent capacitance matrix.
\end{theorem}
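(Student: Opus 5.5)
The plan is to carry out the cascade of Subsection~\ref{ssec:cascade} in the linear setting ($\sigma=0$), now expanding around a Neumann eigenvalue $\omega_0^2\in\sigma(-\Delta_N,D)$ rather than around $\omega=0$. First, I assume $\omega_0/v$ is not a Dirichlet scattering resonance of $D$. Then Proposition~\ref{prop:dtn_analytic} lets us write
\[
\mathcal{T}^{\omega/v}=\mathcal{T}^{\omega_0/v}+(\omega-\omega_0)\,\partial_\omega\mathcal{T}^{\omega_0/v}+\dots
\]
as a convergent series near $\omega_0$. The weak formulation~\eqref{eq:weak_formulation} then reads $\mathcal{A}(\omega,\delta)u=0$, where
\[
\langle \mathcal{A}(\omega,\delta)u,w\rangle=\sum_i\int_{D_i}\nabla u\cdot\nabla\overline w-\frac{\omega^2}{v_i^2}u\overline w\,dx-\delta\langle\mathcal{T}^{\omega/v}u,w\rangle_{\partial D}.
\]
This is an analytic family of Fredholm operators of index zero in $(\omega,\delta)$, since it is a compact perturbation of the coercive form $a$. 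At $\delta=0$ and $\omega=\omega_0$, its kernel is exactly $\mathcal{K}=\operatorname{span}\{u_{j,l}:j\in\mathcal{J}(\omega_0),\,1\le l\le m_j\}$, which has dimension $m(\omega_0)$.

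Formally, I insert $\omega=\omega_0+\delta\omega_1+\dots$ and $u=u_0+\delta u_1+\dots$. At order $\delta^0$ this forces $u_0=\sum c_{j,l}u_{j,l}\in\mathcal{K}$. At order $\delta^1$ we get
\[
\int_D\nabla u_1\cdot\nabla\overline w-\frac{\omega_0^2}{v_i^2}u_1\overline w\,dx-\frac{2\omega_0\omega_1}{v_i^2}\int_D u_0\overline w\,dx-\langle\mathcal{T}^{\omega_0/v}u_0,w\rangle_{\partial D}=0.
\]
Testing with $w=u_{i,k}$ kills the first two terms, because the Neumann operator is self-adjoint and $u_{i,k}$ lies in its kernel at $\omega_0$. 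Using orthonormality, what remains is $\mathbf{M}\,\omega_1\mathbf{c}=\mathcal{C}(\omega_0)\mathbf{c}$, i.e.\ $\omega_1$ is an eigenvalue of $\mathcal{C}^{gen}(\omega_0)$. This solvability condition is the Fredholm alternative for $u_1$, which is then determined up to $\mathcal{K}$, exactly as in Steps~1--3 of Subsection~\ref{ssec:cascade}.

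To make this rigorous rather than formal, I would perform a Lyapunov--Schmidt/Schur-complement reduction. Let $P$ be the $L^2$-orthogonal projection onto $\mathcal{K}$. On $\mathcal{K}^\perp$ the operator $\mathcal{A}(\omega,\delta)$ is invertible uniformly for $(\omega,\delta)$ near $(\omega_0,0)$. Eliminating the $\mathcal{K}^\perp$-component then yields an analytic $m\times m$ matrix
\[
\mathbf{S}(\omega,\delta)=-(\omega-\omega_0)\mathbf{M}+\delta\,\mathcal{C}(\omega_0)+\mathcal{O}\big(\delta^2+\delta|\omega-\omega_0|+|\omega-\omega_0|^2\big).
\]
Resonances correspond exactly to zeros of $\det\mathbf{S}$. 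Rescaling $\omega=\omega_0+\delta z$ gives
\[
\delta^{-m}\det\mathbf{S}=\det\big(\mathcal{C}(\omega_0)-z\mathbf{M}\big)+\mathcal{O}(\delta),
\]
so Rouché's theorem (or Gohberg--Sigal theory) places the zeros, counted with multiplicity, within $o(1)$ of the eigenvalues $\omega_1$ of $\mathcal{C}^{gen}(\omega_0)$.

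The main obstacle is upgrading $z=\omega_1+o(1)$ to the stated error $\mathcal{O}(\delta^2)$ in $\omega$. When $\omega_1$ is a simple eigenvalue, the implicit function theorem applied to $\delta^{-m}\det\mathbf{S}(\omega_0+\delta z,\delta)$ in $(z,\delta)$ gives $z=\omega_1+\mathcal{O}(\delta)$, which is the claim. For multiple eigenvalues the complex symmetric matrix $\mathcal{C}(\omega_0)$ need not be diagonalisable, and Puiseux branches may only give $\mathcal{O}(\delta^{1+1/r})$ for a Jordan block of size $r$. I would therefore state the $\mathcal{O}(\delta^2)$ rate under a semisimplicity assumption on $\omega_1$, in the spirit of Assumption~\ref{as:invertability}. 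In the semisimple case I would further reduce onto the $\omega_1$-eigenspace to recover an $\mathcal{O}(\delta^2)$ error.
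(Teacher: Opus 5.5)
Your argument is correct, but it follows a different route from the paper. The paper does not prove this statement itself: it is quoted in Section~\ref{sec:nonsubwavelength} from earlier work on frequency-dependent capacitance matrices. The closest the paper comes to a proof is its cascade:
\begin{itemize}
\item insert $u=\sum\delta^n u_n$ and $\omega=\sum\delta^n\omega_n$, and read off a Neumann eigenpair at order $\delta^0$;
\item test the order-$\delta$ equation against Neumann eigenfunctions to obtain $\mathcal{C}(\omega_0)\mathbf{a}=\omega_1\mathbf{M}\mathbf{a}$, which is exactly your formal computation;
\item under Assumption~\ref{as:simplicity}, solve Steps 1--3 recursively and prove convergence of the series using weighted bounds of the form $\lvert\omega_k\rvert\leq\mu e^{\alpha(k-1)}k^{-p}$. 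This gives Theorem~\ref{thm:approximation_convergence_linear}.
\end{itemize}

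Your rigorous part instead uses a Schur complement on $\mathcal{K}\oplus\mathcal{K}^\perp$ to reduce to the analytic $m\times m$ matrix $\mathbf{S}(\omega,\delta)$, then applies Rouch\'e and the implicit function theorem. This buys three things the cascade does not:
\begin{itemize}
\item It shows that \emph{every} resonance near $\omega_0$ has the stated form, with multiplicities counted correctly. The cascade only constructs one branch from a given eigenpair.
\item It gives analyticity of a simple branch directly, with no combinatorial convergence estimates.
\item It shows that the $\mathcal{O}(\delta^2)$ rate needs semisimplicity. Your remark that a defective eigenvalue of the complex symmetric $\mathcal{C}^{gen}(\omega_0)$ generically gives only $\mathcal{O}(\delta^{1+1/r})$ is right. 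It is consistent with the paper imposing Assumption~\ref{as:simplicity} for its own results, whereas the quoted statement omits this.
\end{itemize}
What the cascade buys in return is an explicit recursion for all higher-order coefficients, which is what the numerics use.

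A few small repairs are needed.
\begin{itemize}
\item The term $\delta\langle\mathcal{T}^{\omega/v}u,w\rangle_{\partial D}$ is not a compact perturbation on $H^1(D)$, because the DtN map has order one. You do not need compactness: invertibility of the $\mathcal{K}^\perp$ block near $(\omega_0,0)$ follows from invertibility at $(\omega_0,0)$ and continuity in $(\omega,\delta)$.
\item Your non-resonance assumption on $\omega_0/v$ holds automatically for real $\omega_0\neq0$, by Rellich's lemma.
\item You do need $\omega_0\neq0$ explicitly, so that $\mathbf{M}$ is invertible and $\det(\mathcal{C}(\omega_0)-z\mathbf{M})$ has exactly $m$ roots.
\item For Rouch\'e to capture all resonances in a fixed neighbourhood of $\omega_0$, state the a priori localisation $\lvert\omega-\omega_0\rvert\leq C\delta$. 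It follows from your expansion of $\mathbf{S}$, since $-(\omega-\omega_0)\mathbf{M}$ dominates whenever $\lvert\omega-\omega_0\rvert\gg\delta$.
\end{itemize}
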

\subsection{Asymptotics for the Linear Problem}
We now use the perturbative approach to compute the asymptotic expansion of a resonant pair $(\omega(\delta),u(\omega))$ related to a simple eigenvalue of the generalised frequency dependent capacitance matrix. Note that the analysis in this subsection applies mutatis mutandis to the subwavelength regime, for which such an analysis has already been carried out in \cite{feppon2022modal}. While in the subwavelength regime, both approaches yield essentially the same results, the perturbative one carries the advantage that it gives a systematic way of computing higher order corrections. The benefit of this becomes apparent in Section \ref{sec:numerics}, where we examine the predicted convergence rates numerically. 

We begin by recalling the linear Helmholtz problem \eqref{eq:linear_problem}.
As in Subsection \ref{ssec:cascade}, \eqref{eq:linear_problem} can also be formulated variationally
\begin{equation}\label{eq:weak_formulation_linear}
    \sum_{i\in \mathcal{I}}\int_{D_i}\nabla u \cdot\nabla \overline{w} - \frac{\omega^2}{v_i^2}u\overline{w} \,dx - \delta \langle \mathcal{T}^{\omega/v} u, w\rangle_{\partial D} = 0.
\end{equation}
Theorem \ref{thm:fundamental_highfreq} now suggests the ansatz
\begin{equation}\label{eq:power_series_highfreq}
    u = \sum_{n=0}^\infty \delta^{n} u_n \quad \text{and} \quad \omega = \sum_{n=0}^\infty \delta^{n} \omega_n.
\end{equation}
\begin{remark}
    The key difference from the subwavelength regime is that the expansion is now in $\delta$ as opposed to $\delta^{1/2}$. Moreover, $\omega_0$ is no longer assumed to be zero. 
\end{remark}
Plugging this ansatz into equation \eqref{eq:weak_formulation_linear} and expanding around some $\omega_0\in\mathbb{C}$ yields
\begin{equation}\label{eq:cascade_linear1}
    \begin{aligned}
        \int_{D} \nabla u_n \cdot \nabla \overline{w} \, dx 
        &- \sum_{k+l+m=n}\int_D \dfrac{\omega_k\omega_l}{v_i^2} u_m \overline{w} \, dx - \sum_{k=0}^{n-1}\langle\mathcal{A}_k^{\omega_0} u_{n-k-1},w\rangle_{\partial D} = 0,
    \end{aligned}
\end{equation}
where the operators $\mathcal{A}_n^{\omega_0}$ result from expanding the DtN operator:
\begin{equation}
    \mathcal{T}^{\omega/v} = \sum_{n=0}^\infty \delta^{n} \mathcal{A}_n^{\omega_0}, \quad \text{where} \quad \mathcal{A}_n^{\omega_0} = \sum_{k=1}^n \frac{1}{v^k}\mathcal{T}_k^{\omega_0} \sum _{i_1 + \dots+i_k=n}\prod_{j=1}^k\omega_{i_j}.
\end{equation}
The equations at the first two orders are
\begin{align}
    &\int_{D} \nabla u_0 \cdot \nabla \overline{w} -\dfrac{\omega_0^2}{v_i^2} u_0 \overline{w} \, dx  = 0,\\
    &\int_{D} \nabla u_1 \cdot \nabla \overline{w} -\dfrac{\omega_0^2}{v_i^2} u_1\overline{w} - \dfrac{2\omega_0\omega_1}{v_i^2}u_0 \overline{w} \, dx  -\langle \mathcal{T}^{\omega_0/v}u_0,v\rangle= 0.
\end{align}
Consequently, $\omega_0$ must be associated with a Neumann eigenvalue and $u_0$ must be a Neumann eigenfunction of the Laplacian in $D$. Testing the equation at first order with another Neumann eigenfunction yields
\begin{equation}
    -\int_{D} \dfrac{2\omega_0\omega_1}{v_i^2}u_0 \overline{w} \, dx  -\langle \mathcal{T}^{\omega_0/v}u_0,w\rangle= 0.
\end{equation}
We thus find that $(\omega_1,u_0)$ must be an eigenvalue-eigenvector pair for the generalised frequency dependent capacitance matrix. Now, let
\begin{equation}\label{eq:cascade_linear1.5}
    \langle F_{n},w\rangle = 
    - \sum_{\substack{k + l + m = n \\ k,l< n\\m< n-1}}\int_D \dfrac{\omega_k\omega_l}{v_i^2} u_m \overline{w} \, dx - \sum_{k=1}^{n-1}\langle\mathcal{A}_k u_{n-k-1},w\rangle_{\partial D}.
\end{equation}
Observe that $F_{n}$ depends only on $u_0,\dots,u_{n-2}$ and $\omega_0,\dots,\omega_{n-1}$. This allows us to rewrite \eqref{eq:cascade_linear1}
\begin{equation}\label{eq:cascade_linear2}
    \int_{D} \nabla u_n \cdot \nabla \overline{w} \, dx 
    - \dfrac{\omega_0^2}{v_i^2}u_n\overline{w}- \dfrac{2\omega_0\omega_1}{v_i^2}u_{n-1}\overline{w} - \dfrac{2\omega_0\omega_n}{v_i^2}u_0\overline{w} \, dx - \langle\mathcal{T}^{\omega_0/v}u_{n-1},w\rangle_{\partial D} =  \langle F_{n},w\rangle.
\end{equation}
Next, we proceed as in Subsection \ref{ssec:cascade} and decompose $H^1(D) = \mathcal{V}_d\oplus\mathcal{V}_0$, where we now let $\mathcal{V}_d$ be the space spanned by the Neumann eigenfunctions associated with $\omega_0$ and $\mathcal{V}_0$ its orthogonal complement. Assume that $u_0,\dots,u_{n-1}$ and $\omega_0,\dots,\omega_{n}$ are known and that the following holds.
\begin{assumption}\label{as:simplicity}
    $\omega_1$ is a simple eigenvalue of $\mathcal{C}(\omega_0)^{gen}$.
\end{assumption}
We then proceed as follows:

\textbf{Step 1: $u_{n,0}$:} In a first step, we determine $u_{n,0} \in \mathcal{V}_0$ by testing \eqref{eq:cascade_linear2} against $w \in \mathcal{V}_0$ at level $n$. The strict coercivity of the Laplacian on $\mathcal{V}_0$ ensures unique invertibility.

\textbf{Step 2: $\omega_{n+1}$} To compute $\omega_{n+1}$, we test equation \eqref{eq:cascade_linear2} with $u_0$ at level $n+1$. Because $\omega_1$ is simple, we have 
\begin{equation}
    \langle \mathcal{T}^{\omega_0/v}w,u_0\rangle_{\partial D}=0,    
\end{equation}
for all $w \in \mathcal{V}_d$. Thus, we find that
\begin{equation}
    \int_{D}  - \dfrac{2\omega_0\omega_{n+1}}{v_i^2}\lvert u_0\rvert^2 \, dx =  \langle \mathcal{T}^{\omega_0/v}u_{n,0},u_0\rangle + \langle F_{n},u_0\rangle,
\end{equation}
which uniquely determines $\omega_{n+1}$.

\textbf{Step 3: $u_{n,d}$} Once again we test at level $n+1$ with $w\in \mathcal{V}_0$ to obtain that
\begin{equation}
    \int_{D} - \dfrac{2\omega_0\omega_1}{v_i^2}u_{n}\overline{w}  \, dx - \langle\mathcal{T}^{\omega_0/v}u^0_{n},w\rangle_{\partial D} =  \langle F_{n+1},w\rangle +\frac{2\omega_0\omega_n}{v_i^2}u_0\overline{w} + \langle\mathcal{T}^{\omega_0/v}u^d_{n},w\rangle_{\partial D}.
\end{equation}
Because the right hand side is orthogonal to $u_0$, complex symmetry of the frequency dependent capacitance matrix ensures that a unique solution $u^0_{n}$ exists in $\mathcal{V}_0\cap (u_0)^\perp$. 

It remains to prove that this power series expansion converges for $\delta$ small enough. Because this time $\omega_0\neq 0$, a bit more care is needed in bounding $\lVert F_{n}\rVert_{H^{-1}}$. We choose the ansatz
\begin{equation}
        \lvert \omega_k \rvert \leq \frac{\mu \exp(\alpha(k-1))}{k^p},
        \qquad
        \lVert u_k \rVert_{L^2} \leq \frac{\nu \exp(\alpha k)}{(k+1)^p},
        \qquad
        \lVert \nabla u_k \rVert_{L^2} \leq \frac{\nu \exp(\alpha k)}{(k+1)^p}.
\end{equation}
Unlike the main section, the estimate on the second term of equation \eqref{eq:cascade_linear1.5} follows readily from a slight modification of Lemma \ref{lem:bound3}. In bounding the first term, Lemma \ref{lem:bound1} cannot be applied immediately due to the presence of terms including $\omega_0$. However, we can handle these separately to obtain the following estimate:
\begin{equation}
    \sum_{\substack{l+m = n\\
    1\leq l \leq n-1\\1\leq m \leq n-2}}\int_D\frac{\omega_0\omega_l}{v_i^2}u_m\overline{w}\,dx\leq\\mu^2\nu \frac{C_p\exp(\alpha(n-1))}{(n+1)^p}\lVert w\rVert_{L^2},
\end{equation}
where $C_p$ is obtained upon a minor modification of Lemma \ref{lem:sum}. In particular, $C_p$ again tends to zero as p increases. Thus, the following result holds. 

\begin{theorem}\label{thm:approximation_convergence_linear}
    Let $\omega_0\in\sigma(-\Delta_N,D)$ and $(\mathbf{a}, \omega_1) \in \mathbb{C}^N \times \mathbb{C}$ be an eigenvector-eigenvalue pair for the generalised frequency dependent capacitance matrix at $\omega_0$ and suppose that Assumption \ref{as:simplicity} holds.
    Then, there exists $\delta_0 > 0$ such that, for every
    $\delta < \delta_0$, problem~\eqref{eq:linear_problem} admits
    a nontrivial solution $(u, \omega) \in H^1(D) \times \mathbb{C}$
    given by the convergent expansions
    \begin{equation}
        u = \sum_{n=0}^{\infty} \delta^{n} u_n,
        \qquad
        \omega = \sum_{n=0}^{\infty} \delta^{n} \omega_n,
    \end{equation}
    where $\mathbf{a}$ determines the projection of $u$ into the space of Neumann eigenfunctions. In particular, the pair $(u,\omega)$ is analytic in $\delta$.
\end{theorem}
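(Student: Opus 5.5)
We construct the coefficients $(u_n,\omega_n)$ by the three-step recursion above and then prove convergence by induction on weighted bounds of the same type as in Section~\ref{sec:proof1}. The recursion is well-posed at every order. Step~1 inverts the shifted Neumann operator $\int_D \nabla\cdot\nabla\overline{\,\cdot\,} - \omega_0^2 v_i^{-2}(\cdot)\overline{\,\cdot\,}$ on $\mathcal{V}_0$; this is coercive modulo a compact perturbation and injective on $\mathcal{V}_0$ because $\mathcal{V}_0$ is the complement of the $\omega_0$-eigenspace, so it has a bounded inverse with constant $C_\Delta$ depending on the spectral gap at $\omega_0^2$. Step~2 solves a scalar equation for $\omega_{n+1}$ with nonzero coefficient $2\omega_0 v_i^{-2}\lVert u_0\rVert^2$, using $\omega_0\neq 0$. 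Step~3 inverts $\mathcal{C}^{gen}(\omega_0)-\omega_1$ on the complement of $\mathbf{a}$. By Assumption~\ref{as:simplicity} and complex symmetry (which replaces orthogonality by the bilinear pairing $\mathbf{a}^\top\mathbf{b}=0$), this inverse is bounded, with constant $C_{\mathcal{C}}$. We normalise $\mathbf{a}^\top\mathbf{a}=1$, which is admissible for a simple eigenvalue of a complex symmetric matrix.

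We then fix $\mu,\nu$ to dominate $|\omega_0|,|\omega_1|,\lVert u_0\rVert_{H^1}$ and choose $\alpha,p$ later. By induction we prove
\[
|\omega_k|\le \frac{\mu e^{\alpha(k-1)}}{k^p},\qquad \lVert u_k\rVert_{H^1}\le \frac{\nu e^{\alpha k}}{(k+1)^p}.
\]
In the bound on $\lVert F_n\rVert_{(H^1)^*}$ we separate the terms containing $\omega_0$, where one index is pinned, from the pure convolutions $k,l,m\ge 1$. The latter are handled by Lemma~\ref{lem:bound1}, with constant tending to zero as $p\to\infty$. The former are two-index sums, which Lemma~\ref{lem:sum} with $l=2$ controls. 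The DtN terms use Lemma~\ref{lem:DtN_decay}, now for the Taylor coefficients $\mathcal{T}^{\omega_0}_k$ around $\omega_0$; these are analytic near $\omega_0$ by Proposition~\ref{prop:dtn_analytic}, provided $\omega_0$ is not a Dirichlet resonance. The argument of Lemma~\ref{lem:bound3} then gives a constant $C_{\alpha,p}\to 0$ as $\alpha\to\infty$. Lemmas~\ref{lem:bound4}--\ref{lem:bound6} carry over, giving bounds for $u_{n,0}$, $\omega_{n+1}$ and $u_{n,d}$ of the form $\widetilde C_{\alpha,p}\times$(target). This closes the induction once $\widetilde C_{\alpha,p}\le 1$.

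The main obstacle is the terms that carry the fixed, non-small factors $\omega_0$ and $\omega_1$. These include $2\omega_0\omega_1 u_{n-1}$ and $\mathcal{T}^{\omega_0/v}u_{n-1}$ on the left of \eqref{eq:cascade_linear2}, and $\omega_0\omega_l u_m$ with $m$ close to $n$. They are not convolution tails, so $p\to\infty$ does not make them small. We plan to exploit the index shift instead. Each such term involves $u_{n-1}$ or $\omega_{n-1}$, so it carries $e^{\alpha(n-1)}$ against a target of $e^{\alpha n}$. This yields a factor $e^{-\alpha}\bigl(\tfrac{n+1}{n}\bigr)^p$, exactly as the $C_{\mathcal{L}_{u_0}}$ term in Lemma~\ref{lem:bound4}. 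We therefore take the limits in the order $p$ large first and then $\alpha$ large; since $\bigl(\tfrac{n+1}{n}\bigr)^p\le 2^p$, the factor $e^{-\alpha}2^p$ can be made small. The terms involving $\omega_0$ and $\omega_1$ are then absorbed, since the constants depend on them only polynomially.

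Finally, summability follows: $\sum_n \delta^n \lVert u_n\rVert_{H^1}\le \nu\sum_n(\delta e^\alpha)^n<\infty$ for $\delta<\delta_0:=e^{-\alpha}$, and similarly for $\omega$. Hence both series converge in $H^1(D)\times\mathbb{C}$ and define functions analytic in $\delta$. Substituting the partial sums into \eqref{eq:weak_formulation_linear} and using the analyticity of $\omega\mapsto\mathcal{T}^{\omega/v}$, every order of $\delta^n$ vanishes by construction, so $(u,\omega)$ solves \eqref{eq:linear_problem}. The solution is nontrivial because $u_0\neq 0$, and its projection onto $\mathcal{V}_d$ is $\mathbf{a}$ plus $O(\delta)$.
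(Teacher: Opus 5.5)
The gap is in how you control the pinned terms $2\omega_0\omega_l u_m$ with $l\ge 2$, $m\ge 1$. You bound them by Lemma~\ref{lem:sum} with $l=2$ and get smallness from the index shift. That works in Step~1, where they enter at level $n$ with $l+m=n$ and carry $e^{\alpha(n-1)}$ against the target $e^{\alpha n}$.

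But $\omega_{n+1}$ and $u_{n,d}$ are determined from the level-$(n+1)$ equation. There $F_{n+1}$ contains $2\omega_0\omega_l u_{n+1-l}$ for $2\le l\le n$, with weight $e^{\alpha(l-1)}e^{\alpha(n+1-l)}=e^{\alpha n}$. This is exactly the weight of the targets $\mu e^{\alpha n}/(n+1)^p$ and $\nu e^{\alpha n}/(n+1)^p$. In $2\omega_0\omega_2 u_{n-1}$, for instance, the $e^{\alpha(n-1)}$ of $u_{n-1}$ is cancelled by the $e^{\alpha}$ carried by $\omega_2$; the same happens for $2\omega_0\omega_n u_1$. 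So the index shift gives nothing at this level.

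Lemma~\ref{lem:sum} cannot compensate either. Because of the corner terms $i=1$ and $i=k-1$, one has $I(p,k,2)\ge 2(k-1)^{-p}$, so its constant is at least $2$ for every $p$. Your constants $\widetilde C_{\alpha,p}$ for $\omega_{n+1}$ and $u_{n,d}$ therefore contain an order-one contribution that neither $p\to\infty$ nor $\alpha\to\infty$ reduces. For $\omega_{n+1}$ it is proportional to $\nu\lVert u_0\rVert$ divided by the solvability coefficient; for $u_{n,d}$ it is proportional to $\lvert\omega_0\rvert\mu$ times the norm of the inverse capacitance block. Hence $\widetilde C_{\alpha,p}\le 1$ cannot be arranged as you describe, and the induction does not close.

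The missing observation is the reverse of your claim that these terms are not convolution tails. At level $n+1$, the two extreme members of the two-index sum are $l=1$ (the term $2\omega_0\omega_1 u_n$) and $m=0$ (the term $2\omega_0\omega_{n+1}u_0$). These are exactly the terms kept on the left-hand side. What remains in $F_{n+1}$ is $\sum_{l=2}^{n} l^{-p}(n+2-l)^{-p}$, with the corners removed.

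Since $l(n+2-l)\ge 2n$ on this range, each term is at most $\bigl(\frac{n+1}{2n}\bigr)^p (n+1)^{-p}\le (3/4)^p (n+1)^{-p}$ for $n\ge 2$; the sum is empty for $n=1$. The $q$-splitting from the proof of Lemma~\ref{lem:bound1} then gives a bound $C_p (n+1)^{-p}$ with $C_p\to 0$, uniformly in $n$ and independent of $\alpha$. This is what the paper's displayed estimate for the $\omega_0$-terms supplies, with $C_p\to 0$ via a ``minor modification'' of Lemma~\ref{lem:sum}.

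With this estimate your ordering does close the induction. First, $p$ controls these sums and the pure convolutions. Then $\alpha$ controls the genuinely shifted terms: $2\omega_0\omega_1 u_{n-1}$, $\mathcal{T}^{\omega_0/v}u_{n-1}$, the level-$n$ $\omega_0$-sums in Step~1, and the DtN tails.

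A smaller inconsistency: your Step~3 uses the bilinear pairing, so Step~2 must test with $\overline{u_0}$. The coefficient of $\omega_{n+1}$ is then $\int_D 2\omega_0 v_i^{-2} u_0^2\,dx=\mathbf{a}^\top\mathbf{M}\mathbf{a}$ (in a real orthonormal eigenbasis), not a multiple of $\lVert u_0\rVert^2$. Its nonvanishing is what algebraic simplicity provides. Likewise, the complement in Step~3 should be $\mathbf{a}^\top\mathbf{M}\mathbf{b}=0$.
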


\subsection{Existence of Small Amplitude Solutions for the Nonlinear Problem}
In this subsection we again study \eqref{eq:nonlinear_problem}, however, in the non-subwavelength regime.  This problem is qualitatively very different from the subwavelength regime: Because $\omega$ is $\mathcal{O}(1)$ at the leading order, the nonlinearity no longer acts as a perturbation of the linear equation. That is, substituting the ansatz \eqref{eq:power_series_highfreq} into the nonlinear problem, we obtain at the leading order that
\begin{equation}
    \int_{D}\nabla u_0 \cdot \nabla \overline{w} - \dfrac{\omega_0^2}{v_i^2}(1 + \sigma \lvert u_0\rvert^2)u_0\overline{w}=0.
\end{equation}
From this, it becomes apparent that there is no reason why the leading order term should be governed by the discrete problem. We therefore pursue the more modest goal of finding solitons whose magnitude is of order $\delta$, that is, taking $u_0=0$. This ensures that the nonlinear term is of the same order as the DtN operator. Substituting this ansatz into equation \eqref{eq:weak_formulation} yields
\begin{equation}\label{eq:cascade1NS}
    \begin{aligned}
        \int_{D} \nabla u_n \cdot \nabla \overline{w} \, dx 
        &- \sum_{k+l+m=n}\int_D \dfrac{\omega_k\omega_l}{v_i^2} u_m \overline{w} \, dx \\
        &- \sigma \sum_{k+l+m+r+s=n}\int_D \dfrac{\omega_k\omega_l}{v_i^2}u_m\overline{u}_r u_s\overline{w} \, dx \\
        &- \sum_{k=0}^{n-2}\langle\mathcal{A}_k u_{n-k-1},w\rangle_{\partial D} = 0.
    \end{aligned}
\end{equation}
We now consider the first few levels of the cascade
\begin{equation}
    \begin{aligned}
    &\int_{D} \nabla u_1 \cdot \nabla \overline{w} -\dfrac{\omega_0^2}{v_i^2} u_1\overline{w}  \, dx  = 0\\
    &\int_{D} \nabla u_2 \cdot \nabla \overline{w} -\dfrac{\omega_0^2}{v_i^2} u_2\overline{w}  -\frac{2\omega_0\omega_1}{v_i^2}u_1\overline{w} \, dx -\langle \mathcal{T}^{\omega_0/v}u_1,v\rangle= 0\\
    &\int_{D} \nabla u_3 \cdot \nabla \overline{w} -\dfrac{\omega_0^2}{v_i^2} u_3\overline{w}-\frac{2\omega_0\omega_1}{v_i^2}u_2\overline{w} -\frac{\omega_1^2}{v_i^2}u_1\overline{w}-\frac{2\omega_0\omega_2}{v_i^2}u_1\overline{w}- \sigma\frac{\omega_0^2}{v_i^2}\lvert u_1\rvert^2u_1\overline{w} \, dx\\
    &-\langle \mathcal{T}^{\omega_0/v}u_2,v\rangle-\langle \mathcal{A}_1^{\omega_0/v}u_1,v\rangle= 0.\\
\end{aligned}
\end{equation}
This ansatz has the curious feature that the nonlinearity only enters at level $3$, yet the cascade remains solvable. Once again, we define an error term
\begin{equation}\label{eq:cascade1.5NS}
    \langle F_{n},w\rangle = 
    - \sum_{\substack{k + l + m = n \\ k,l< n-1\\m< n-1}}\int_D \dfrac{\omega_k\omega_l}{v_i^2} u_m \overline{w} \, dx- \sigma \sum_{\substack{k+l+m+r+s=n\\m,r,s\geq 1}}\int_D \dfrac{\omega_k\omega_l}{v_i^2}u_m\overline{u}_r u_s\overline{w} \, dx - \sum_{k=1}^{n-2}\langle\mathcal{A}_k u_{n-k-1},w\rangle_{\partial D}.
\end{equation}
Note that $F_{n-1}$ depends only on $u_1, \dots, u_{n-2}$ and $\omega_0,\dots,\omega_{n-2}$. Using this, we can rewrite equation \eqref{eq:cascade1NS} as
\begin{equation}\label{eq:cascade2NS}
    \int_{D} \nabla u_n \cdot \nabla \overline{w}
    - \dfrac{\omega_0^2}{v_i^2}u_n\overline{w}- \dfrac{2\omega_0\omega_1}{v_i^2}u_{n-1}\overline{w} - \frac{2\omega_0\omega_{n-1}}{v_i^2}u_1\overline{w}\, dx - \langle\mathcal{T}^{\omega_0/v}u_{n-1},w\rangle_{\partial D} =  \langle F_{n-1},w\rangle.
\end{equation}
We first examine the solvability of the system. Assume that $u_1,\dots,u_{n-1};\omega_1,\dots,\omega_{n-2}$ are known and that Assumptions \ref{as:simplicity} hold.

\textbf{Step 1: $u_{n,0}$:} In a first step, we determine $u_{n,0} \in \mathcal{V}_0$ by testing \eqref{eq:cascade2NS} at level $n$ against $w \in \mathcal{V}_0$. The strict coercivity of the Laplacian in $\mathcal{V}_0$ ensures unique invertibility.

\textbf{Step 2: $\omega_{n-1}$} To compute $\omega_{n-1}$, we test equation \eqref{eq:cascade2NS} at level $n$ with $u_0$, yielding
\begin{equation}
    -\int_{D} \frac{2\omega_0\omega_{n-1}}{v_i^2}\lvert u_{1}\rvert^2\, dx =\langle \mathcal{T}^{\omega/v}u_1,u_1\rangle_{\partial D} + \langle F_{n-1},u_1\rangle, 
\end{equation}
for which a unique solution exists.

\textbf{Step 3: $u_{n,d}$} In a final step, we test equation \eqref{eq:cascade2NS} with $w\in\mathcal{V}_d$ at level $n+1$ to compute $u_{n,d}$. This yields
\begin{equation}\label{eq:cascade3NS}
    \int_{D} 
    - \dfrac{2\omega_0\omega_1}{v_i^2}u_{n}\overline{w} - \frac{2\omega_0\omega_{n}}{v_i^2}u_1\overline{w}\, dx - \langle\mathcal{T}^{\omega_0/v}u_{n},w\rangle_{\partial D} =  \langle F_{n},w\rangle.
\end{equation}
By construction, the above holds for $u_1$ independent of $u_{n}$ since $\omega_1$ is simple, it follows that the above has a unique solution $u_{n,d}$ that is perpendicular to $u_1$.

Next, we have to consider the growth of the coefficients. To this end, one chooses the ansatz
\begin{equation}
    \lvert \omega_k \rvert \leq \frac{\mu \exp(\alpha(k-1))}{k^p},
        \qquad
        \lVert u_k \rVert_{L^2} \leq \frac{\nu \exp(\alpha (k-1))}{k^p},
        \qquad
        \lVert \nabla u_k \rVert_{L^2} \leq \frac{\nu \exp(\alpha (k-1))}{k^p}.
\end{equation}
The strategy of the preceding sections revolving around avoiding the corner of the simplexes in the sums no longer works. However, the index shift in the above ansatz grants a factor of $\exp(-\alpha)$ compared to Lemmas \ref{lem:bound1} and \ref{lem:bound2}, which can be used to control the error terms; for reference, see the technique employed in \cite{sacchetti2023perturbation}. Thus, we obtain our final theorem.
\begin{theorem}\label{thm:approximation_convergence_nonlinear}
    Let $\omega_0\in\sigma(-\Delta_N,D)$ and $(\mathbf{a}, \omega_1) \in \mathbb{C}^N \times \mathbb{C}$ be an eigenvector-eigenvalue pair for the generalised frequency dependent capacitance matrix at $\omega_0$ and suppose that Assumption \ref{as:simplicity} holds.
    Then, there exists $\delta_0 > 0$ such that, for every
    $\delta < \delta_0$, problem~\eqref{eq:nonlinear_problem} admits
    a nontrivial solution $(u, \omega) \in H^1(D) \times \mathbb{C}$
    given by the convergent expansions
    \begin{equation}
        u = \sum_{n=1}^{\infty} \delta^{n} u_n,
        \qquad
        \omega = \sum_{n=0}^{\infty} \delta^{n} \omega_n,
    \end{equation}
    where $\mathbf{a}$ determines the projection of $u$ into the space of Neumann eigenfunctions. In particular, the pair $(u,\omega)$ is analytic in $\delta$.
\end{theorem}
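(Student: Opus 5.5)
The plan is to mirror the proof of Theorem~\ref{thm:approximation_convergence}, adapted to the small-amplitude cascade \eqref{eq:cascade2NS}.

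\textbf{Step 1: well-posedness of the recursion.} We set $u_1:=\sum_{(j,l)}a_{(j,l)}u_{j,l}\in\mathcal{V}_d$, where $\mathcal{V}_d$ is now the span of the Neumann eigenfunctions at $\omega_0$, and we take $\omega_1$ to be the given eigenvalue. The level-$2$ equation tested against $\mathcal{V}_d$ is exactly the eigenvalue problem for $\mathcal{C}^{gen}(\omega_0)$, so it is satisfied by hypothesis. The recursion then proceeds by induction on $n$ through the three steps described above.
\begin{itemize}
\item[(i)] $u_{n,0}\in\mathcal{V}_0$ is obtained from the level-$n$ equation. On $\mathcal{V}_0$ the form $\int\nabla\cdot\nabla-\omega_0^2v_i^{-2}$ is invertible, because $\omega_0^2/v_i^2$ is not a Neumann eigenvalue on $\mathcal{V}_0$ and the Neumann spectrum is discrete. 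The resulting inverse has a fixed norm $C_\Delta$.
\item[(ii)] $\omega_{n-1}$ comes from testing against $u_1$. We normalise $\int_D v_i^{-2}u_1^2\neq0$, which uses complex symmetry of $\mathcal{C}(\omega_0)$ and simplicity, so that we pair with $\overline{u_1}$ (bilinear pairing). The nonlinear term enters only through $F$.
\item[(iii)] $u_{n,d}\perp u_1$ comes from \eqref{eq:cascade3NS}. We use that $\mathcal{C}^{gen}(\omega_0)-\omega_1$ is invertible on the complement of the eigenvector, by Assumption~\ref{as:simplicity} and the Fredholm alternative for complex symmetric matrices. Solvability is guaranteed by the choice of $\omega_n$.
\end{itemize}
Each of these solution maps is a bounded linear map with norm independent of $n$.

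\textbf{Step 2: inductive bounds.} We posit the ansatz stated before the theorem:
\[
|\omega_k|\le\mu e^{\alpha(k-1)}k^{-p},\qquad \|u_k\|_{H^1}\le\nu e^{\alpha(k-1)}k^{-p}.
\]
We then show that $\|F_{n-1}\|_{(H^1)^*}$ is bounded by $\varepsilon(\alpha,p)\,e^{\alpha(n-1)}n^{-p}$ with $\varepsilon$ small. We split $F$ into three pieces.
\begin{itemize}
\item[(a)] The quadratic-frequency piece. Terms containing $\omega_0$ are treated separately, as in the linear case. For all terms, Lemma~\ref{lem:sum} gives $\sum\prod i_j^{-p}\le C_p^{l-1}n^{-p}$.
\item[(b)] The cubic piece. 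Since every $u$-index is at least $1$, the exponential weights give $e^{\alpha(m+r+s-3)}=e^{\alpha(n-k-l)}e^{-3\alpha}$. We therefore gain a factor $e^{-2\alpha}$ relative to the target, after accounting for $e^{\alpha(k-1)}e^{\alpha(l-1)}$ and $\omega_0$ being $\mathcal{O}(1)$.
\item[(c)] The DtN piece. This is handled by Lemma~\ref{lem:DtN_decay} and the argument of Lemma~\ref{lem:bound3}, which yields $\|\mathcal{A}_k\|\le C_{\alpha,p}e^{\alpha k}k^{-p}$ with $C_{\alpha,p}\to0$.
\end{itemize}
Feeding these bounds through the three bounded solution maps of Step~1 reproduces the ansatz for $u_n$ and $\omega_{n-1}$, with constants that vanish as $\alpha\to\infty$.

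\textbf{Main obstacle.} The hard step is the linear coupling terms $2\omega_0\omega_1 u_{n-1}$ and $\mathcal{T}^{\omega_0/v}u_{n-1}$. These carry no small combinatorial factor, and they are exactly the reason the simplex-corner argument fails here. The index shift in the ansatz is what rescues it: $\|u_{n-1}\|\le\nu e^{\alpha(n-2)}(n-1)^{-p}$, which is $e^{-\alpha}\bigl(n/(n-1)\bigr)^p$ times the target bound for level $n$. So with $p$ fixed first and $\alpha$ chosen large afterwards, this term is absorbed. Small indices, where $(n/(n-1))^p$ is large, are handled by adjusting $\mu,\nu$ to cover finitely many initial terms. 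Closing the induction then gives
\[
\|u_n\|_{H^1}\lesssim e^{\alpha n},\qquad |\omega_n|\lesssim e^{\alpha n},
\]
so both series converge for $\delta<\delta_0:=e^{-\alpha}$.

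\textbf{Conclusion.} The partial sums satisfy \eqref{eq:weak_formulation} order by order. Uniform convergence in $H^1$ makes the nonlinear and DtN terms converge: $H^1\hookrightarrow L^4$ handles the cubic term, and analyticity of $\mathcal{T}^{\omega/v}$ near $\omega_0$ (Proposition~\ref{prop:dtn_analytic}) handles the DtN term. The limit is therefore a genuine solution. It is nontrivial because $u_1\neq0$, and it is analytic in $\delta$ as a convergent power series.
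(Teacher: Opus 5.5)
Your proposal is correct and follows essentially the same route as the paper. It uses the same three-step cascade on \eqref{eq:cascade2NS} and the same index-shifted ansatz, whose extra factor $e^{-\alpha}$ absorbs the cubic, DtN and linear-coupling terms, while the $\omega_0$-cross terms are handled by the corner-avoiding sum as in the linear case (so $p$ must be taken large before $\alpha$, since those constants do not improve with $\alpha$). Two of your refinements actually correct the paper's sketch: invertibility rather than ``coercivity'' on $\mathcal{V}_0$, and the bilinear pairing with $\overline{u_1}$ (nonvanishing by complex symmetry plus algebraic simplicity) rather than $\lvert u_1\rvert^2$; note only that the level-$n$ forcing $F_{n-1}$ also fixes $\omega_{n-1}$ and $u_{n-1,d}$, so it must be bounded against $e^{\alpha(n-2)}(n-1)^{-p}$ rather than $e^{\alpha(n-1)}n^{-p}$, which your estimates (a)--(c) do in fact deliver.
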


\section{Numerical Illustrations}\label{sec:numerics} 
This section presents some numerical illustrations, underlining the theoretical findings. We begin with a description of the numerical implementation of Section \ref{ssec:cascade}.

Each cascade order requires a Neumann--Poisson solver on every resonator
together with the exterior coupling. The Poisson problems are discretised
spectrally: spherical harmonics $Y_\ell^m$ up to degree $\ell_{\max}$ for the
angular variables, and Chebyshev--Gauss--Lobatto collocation with $k_{\max}$
nodes in the radius. As the angular Laplacian is diagonal in $Y_\ell^m$, each
mode reduces to a one-dimensional radial operator that is LU-factorised once
(with the Neumann condition and the regularity/zero-mean constraint imposed on
the boundary rows) and reused across orders; the Chebyshev basis converges
exponentially in $k_{\max}$, so the spatial error lies below the
cascade-truncation error even for modest $k_{\max}$.

The exterior is eliminated through the Dirichlet-to-Neumann operator
$T(\omega)$, built from the outgoing multipole expansion and the Helmholtz
addition theorem; its static
limit gives the capacitance matrix, and $T(\omega)$ is Taylor-expanded in
$\omega$ so each order applies precomputed coefficients. The leading-order
nonlinear capacitance eigenproblem is solved by Newton's method, after which
each higher order reduces to one back-substitution per resonator and a small
dense solve. All results use $\ell_{\max}=8$ and $k_{\max}=16$, for which both
truncations converge below the reported errors; the accuracy is verified
against exact references when available
and otherwise through the $L^2$ residuals of the interior equation and the
transmission conditions.

We now begin with a simple sanity check: In Figure \ref{fig:single_resonator_linear} we consider a single sphere. Here, an exact formula for the subwavelength resonance is known \cite{ammariMinnaert2018}.  In Subfigures \ref{fig:single_resonator_linear1} and \ref{fig:single_resonator_linear2}, we see excellent agreement with the predicted convergence rates of Theorem \ref{thm:approximation_convergence_linear}. 

\begin{figure}[htbp]
     \centering
     \begin{subfigure}[b]{0.45\textwidth}
         \centering
         \includegraphics[width=\textwidth]{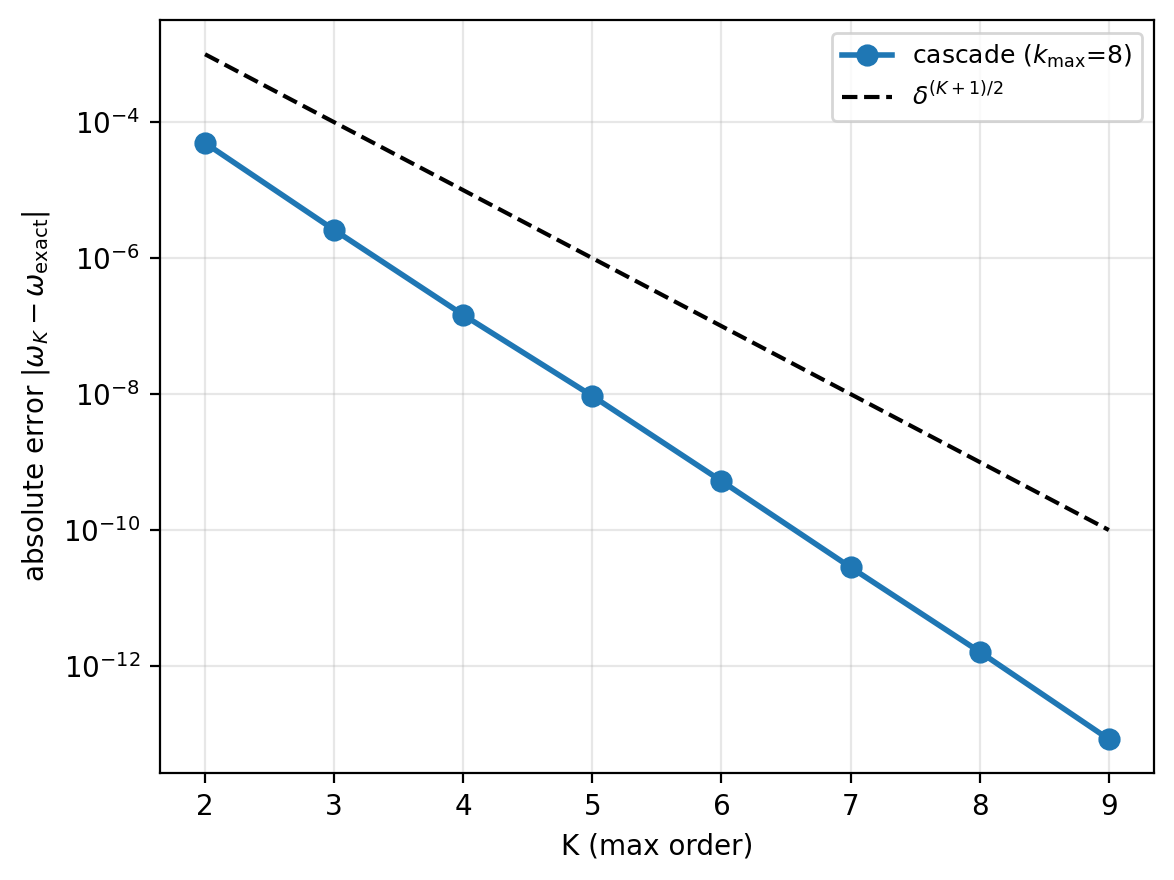}
         \caption{K-convergence of $\omega$ at $\delta = 0.01$}
         \label{fig:single_resonator_linear1}
     \end{subfigure}
     \hfill
     \begin{subfigure}[b]{0.45\textwidth}
         \centering
         \includegraphics[width=\textwidth]{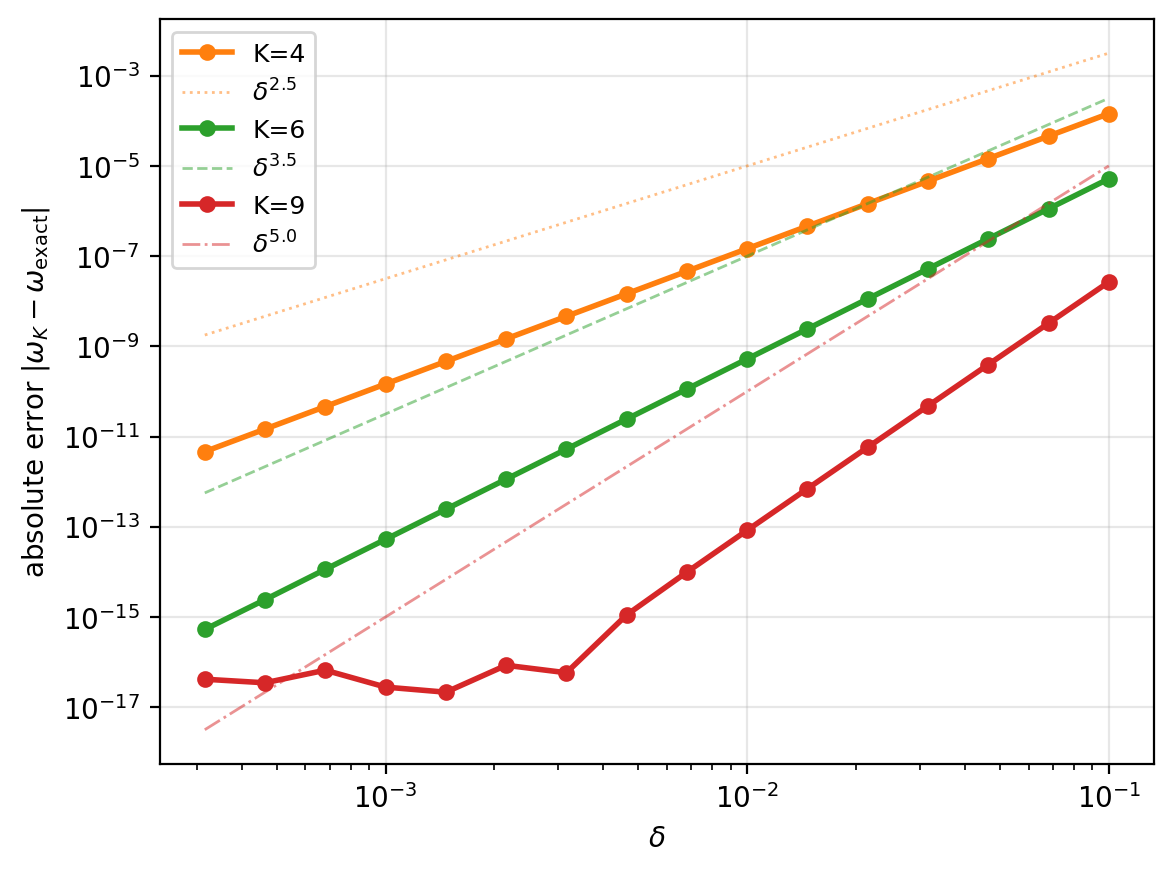}
         \caption{$\delta$-convergence}
         \label{fig:single_resonator_linear2}
     \end{subfigure}

     \vspace{1em} 

     \begin{subfigure}[b]{0.45\textwidth}
         \centering
         \includegraphics[width=\textwidth]{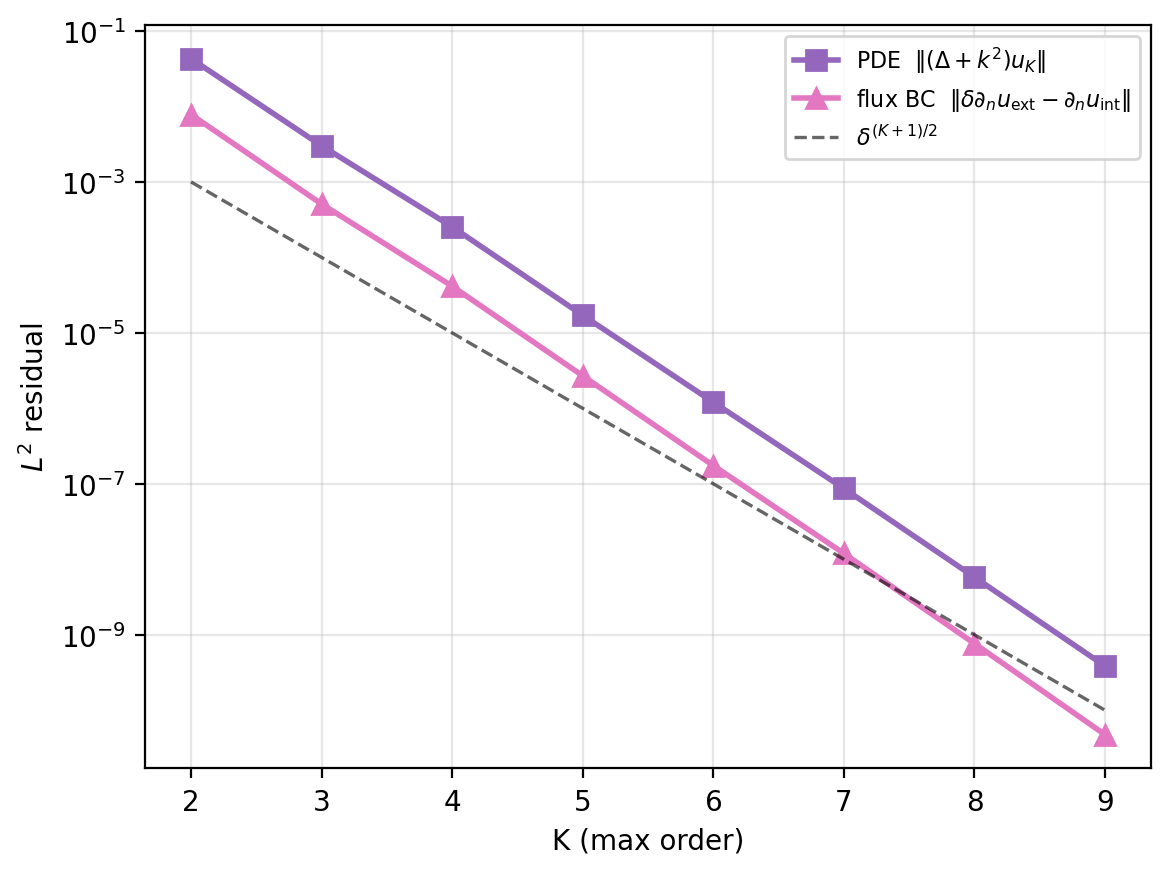}
         \caption{Operator residuals at $\delta = 0.01$}
         \label{fig:single_resonator_linear3}
     \end{subfigure}
     
     \caption{Comparison of perturbative approach versus exact solution for a single resonator with radius $r=0.1$, interior wave speed $v_1 = 1/3.48$ and exterior wave speed $v=1$.}
     \label{fig:single_resonator_linear}
\end{figure}

Next, we consider a two resonator system. Here, no exact formulas for the two subwavelength resonances are available. Therefore, we compute a reference solution using a truncated multipole method. The resulting plots can be seen in Figure \ref{fig:dimer_linear}. 

\begin{figure}[htbp]
     \centering
     \begin{subfigure}[b]{0.45\textwidth}
         \centering
         \includegraphics[width=\textwidth]{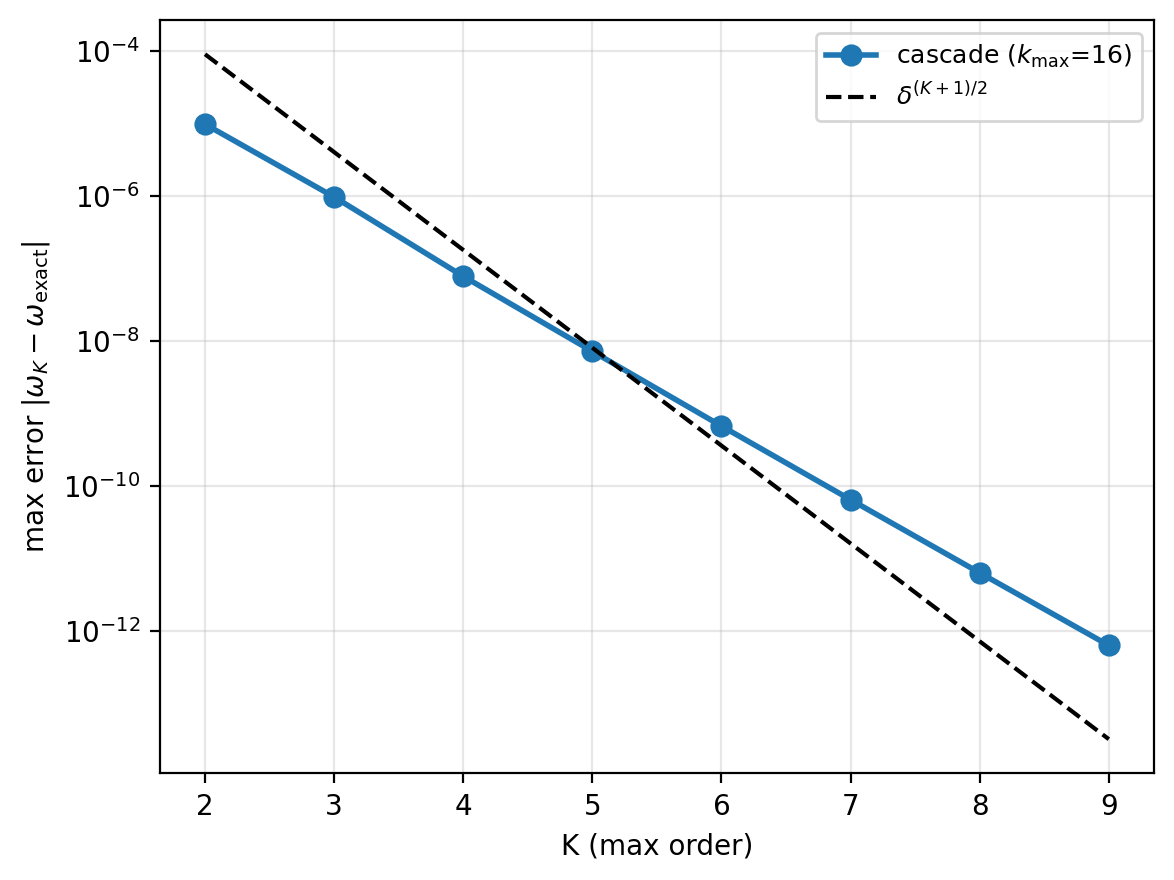}
         \caption{K-convergence of $\omega$ at $\delta = 0.002$}
         \label{fig:dimer_linear1}
     \end{subfigure}
     \hfill
     \begin{subfigure}[b]{0.45\textwidth}
         \centering
         \includegraphics[width=\textwidth]{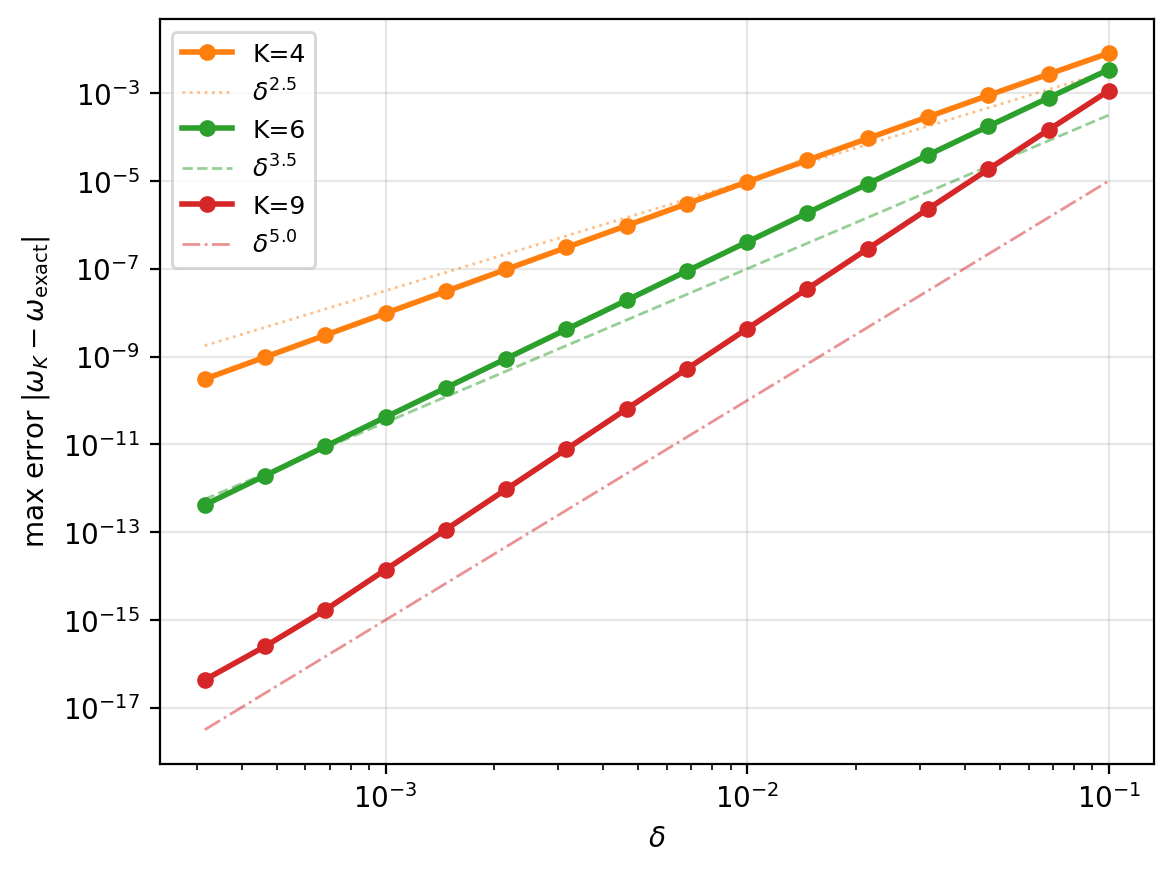}
         \caption{$\delta$-convergence}
         \label{fig:dimer_linear2}
     \end{subfigure}
     \caption{Comparison of perturbative approach versus multipole solution for a two sphere system with radii $r=0.1$, distance $0.3$, interior wave speed $v_1=v_2=1/3.48$ and exterior wave speed $v=1$.}
     \label{fig:dimer_linear}
\end{figure}

We now move on to the nonlinear problem \eqref{eq:nonlinear_problem} for a single resonator. In Figure \ref{fig:single_resonator_nonlinear3}, we see the bifurcation diagram for the discrete approximation. The error plots in Figure \ref{fig:single_resonator_linear1} show excellent agreement with Theorem \ref{thm:approximation_convergence}, while in Figure \ref{fig:single_resonator_nonlinear2} we observe an error plateau around $\delta=0.001$. Further numerical experiments suggest that this is a truncation error.

\begin{figure}[htbp]
     \centering
     \begin{subfigure}[b]{0.45\textwidth}
         \centering
         \includegraphics[width=\textwidth]{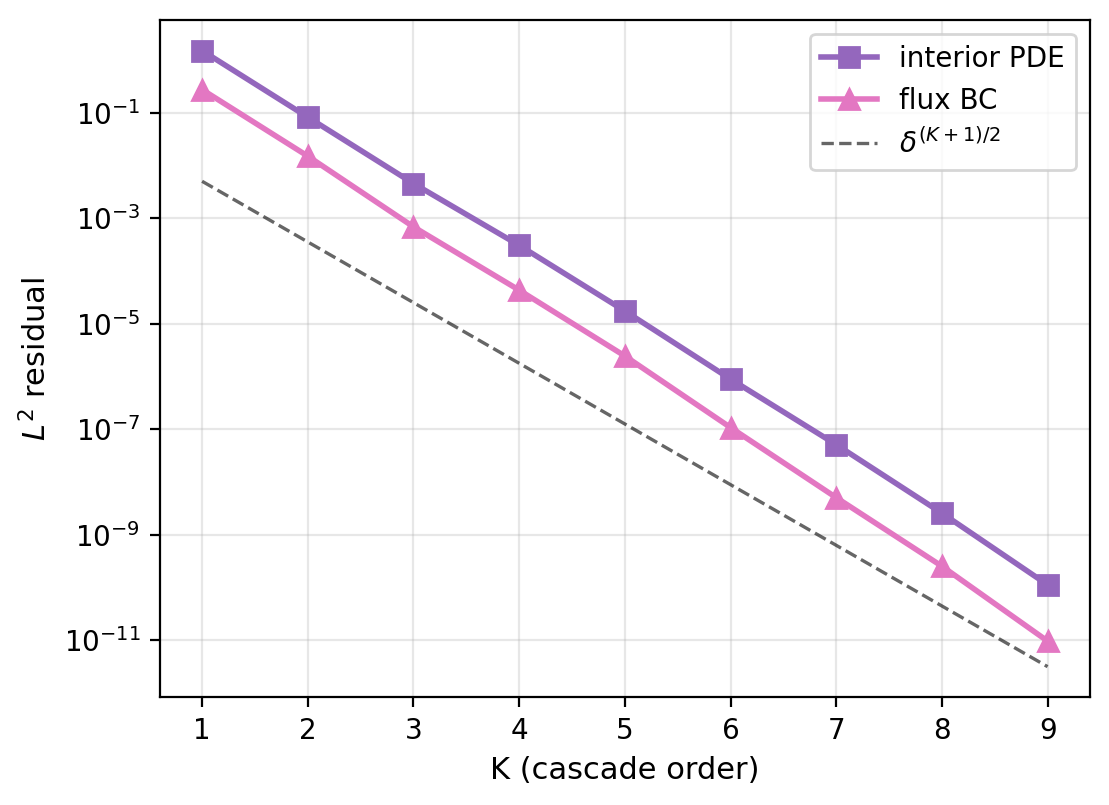}
         \caption{K-convergence of $\omega$ at $\delta = 0.01$}
         \label{fig:single_resonator_nonlinear1}
     \end{subfigure}
     \hfill
     \begin{subfigure}[b]{0.45\textwidth}
         \centering
         \includegraphics[width=\textwidth]{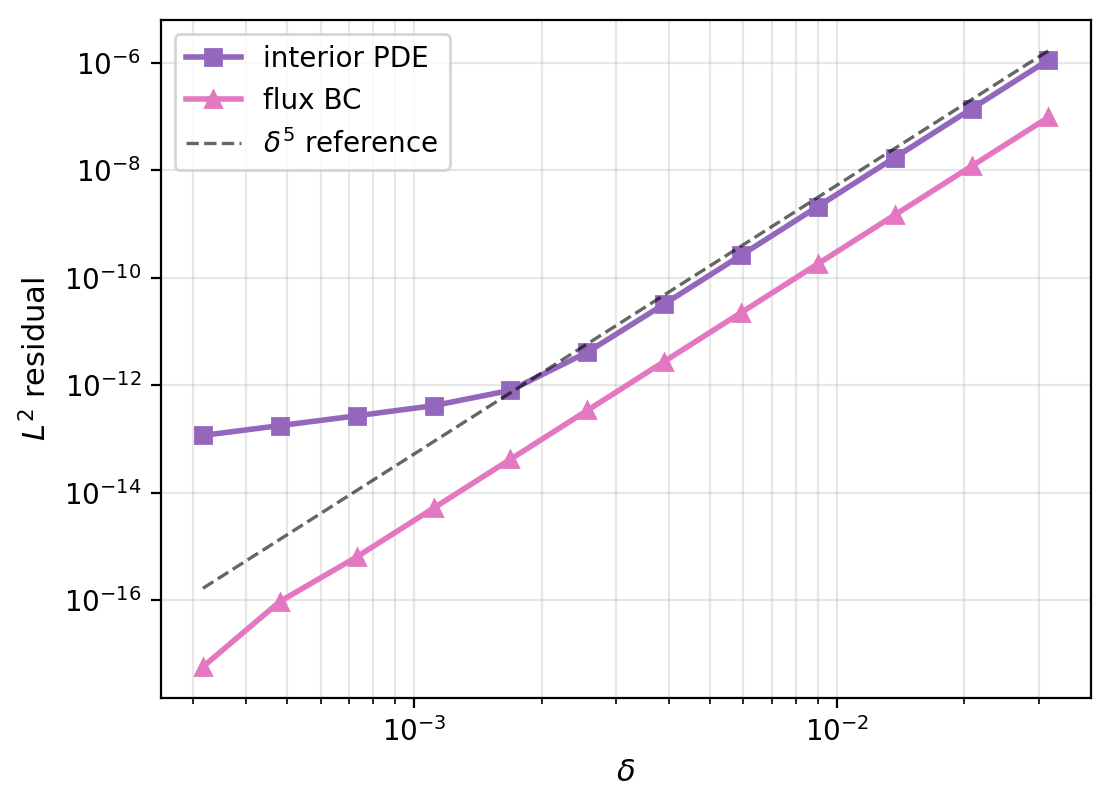}
         \caption{$\delta$-convergence at $K=9$}
         \label{fig:single_resonator_nonlinear2}
     \end{subfigure}

     \vspace{1em} 

     \begin{subfigure}[b]{0.45\textwidth}
         \centering
         \includegraphics[width=\textwidth]{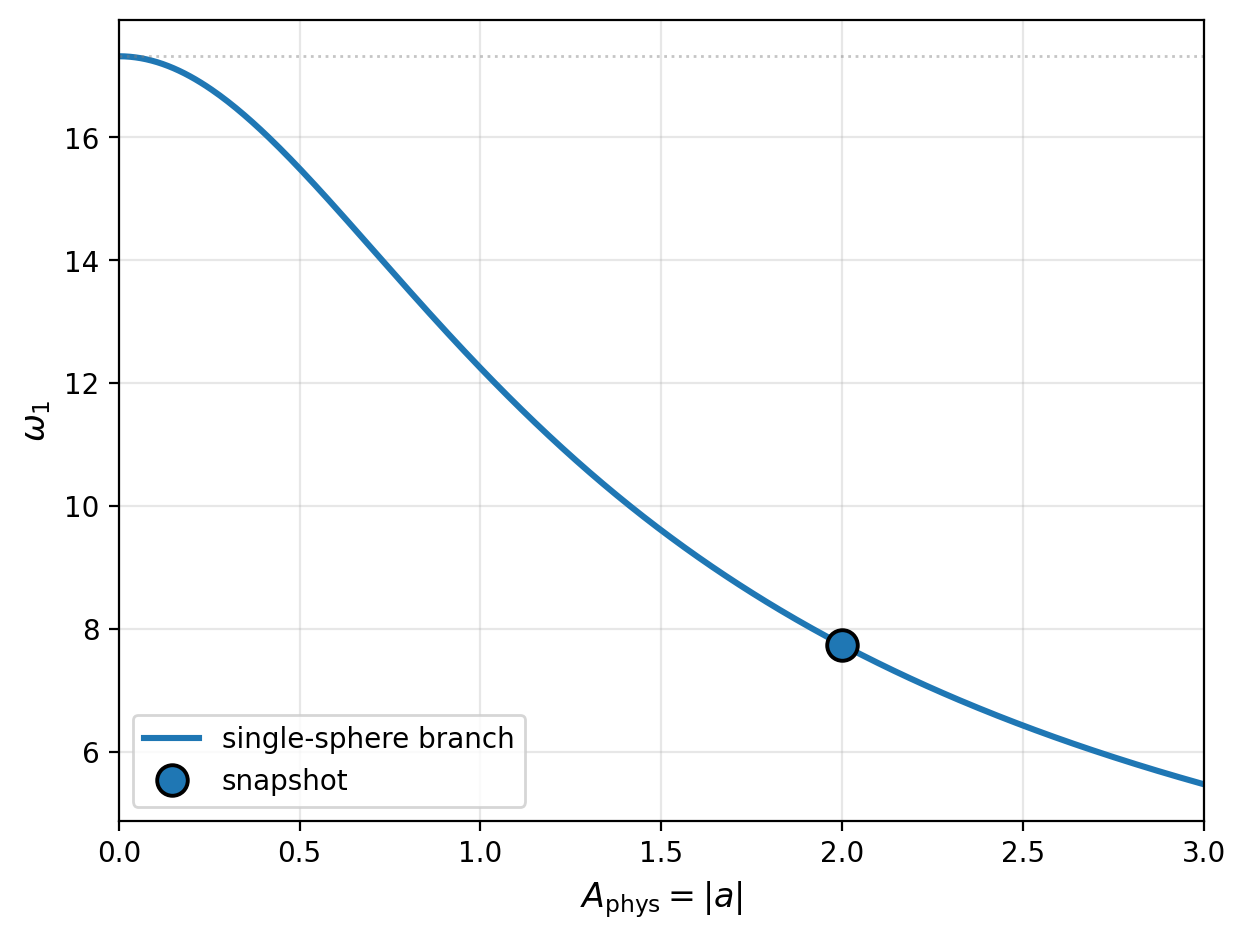}
         \caption{Bifurcation diagram at leading order}
         \label{fig:single_resonator_nonlinear3}
     \end{subfigure}
     
     \caption{Bifurcation diagram and error convergence for a single resonator with radius $r=0.1$, interior wave speed $v_1 = 1/3.48$ and exterior wave speed $v=1$.}
     \label{fig:single_resonator_nonlinear}
\end{figure}

Finally, we turn to the non-subwavelength regime of Section \ref{sec:nonsubwavelength}. In Figure \ref{fig:nonsub_single_resonator_linear}, we study the second resonant frequency of the monomer. The convergence rates of Theorem \ref{thm:approximation_convergence_linear} are clearly visible in plots \ref{fig:nonsub_single_resonator_linear1} and \ref{fig:nonsub_single_resonator_linear2}. The error plateau of \ref{fig:nonsub_single_resonator_linear3} is a truncation error. As before, further numerical experiments have shown that the plateau decreases when increasing $k_{\max}$ and $l_{\max}$. Another example for the linear, non-subwavelength regime is shown in Figure \ref{fig:nonsub_dimer_linear}, where the second resonance of a dimer system is studied. The last Figure of this Section, \ref{fig:nonsub_nonlinear_single} showcases the error convergence with respect to $\delta$, that is, the residuals are divided by $\delta$. 

\begin{figure}[htbp]
     \centering
     \begin{subfigure}[b]{0.45\textwidth}
         \centering
         \includegraphics[width=\textwidth]{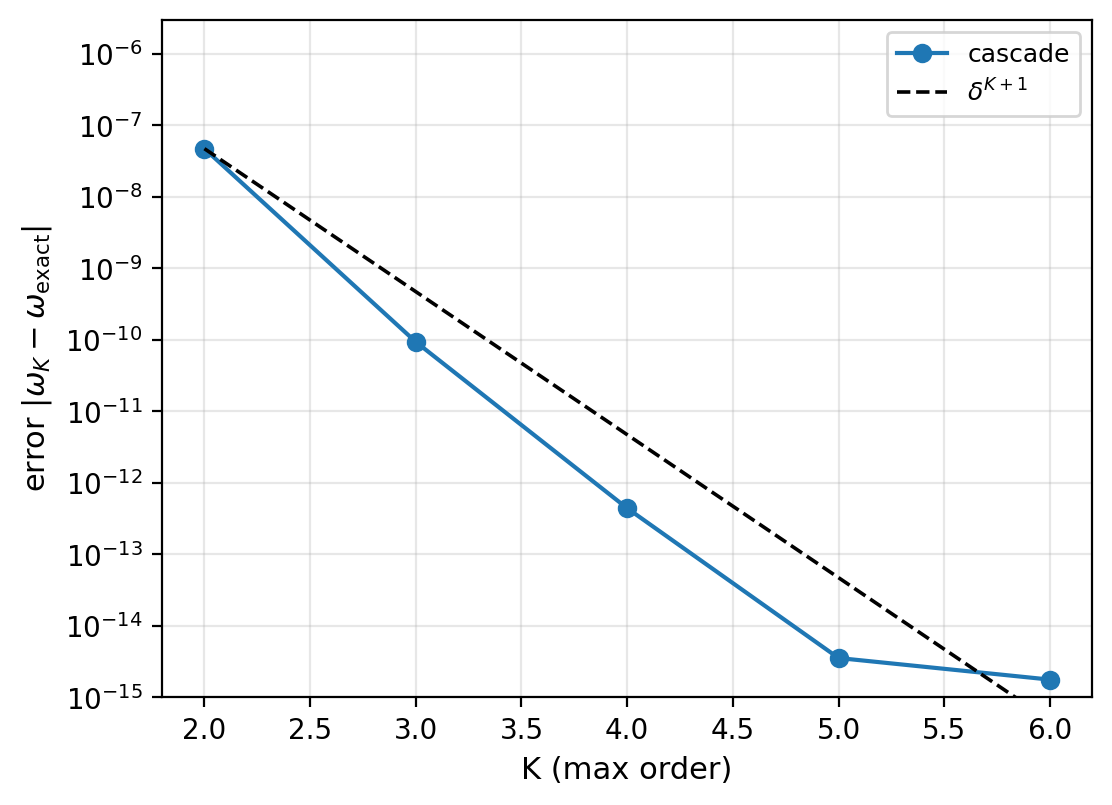}
         \caption{K-convergence of $\omega$ at $\delta = 0.01$}
         \label{fig:nonsub_single_resonator_linear1}
     \end{subfigure}
     \hfill
     \begin{subfigure}[b]{0.45\textwidth}
         \centering
         \includegraphics[width=\textwidth]{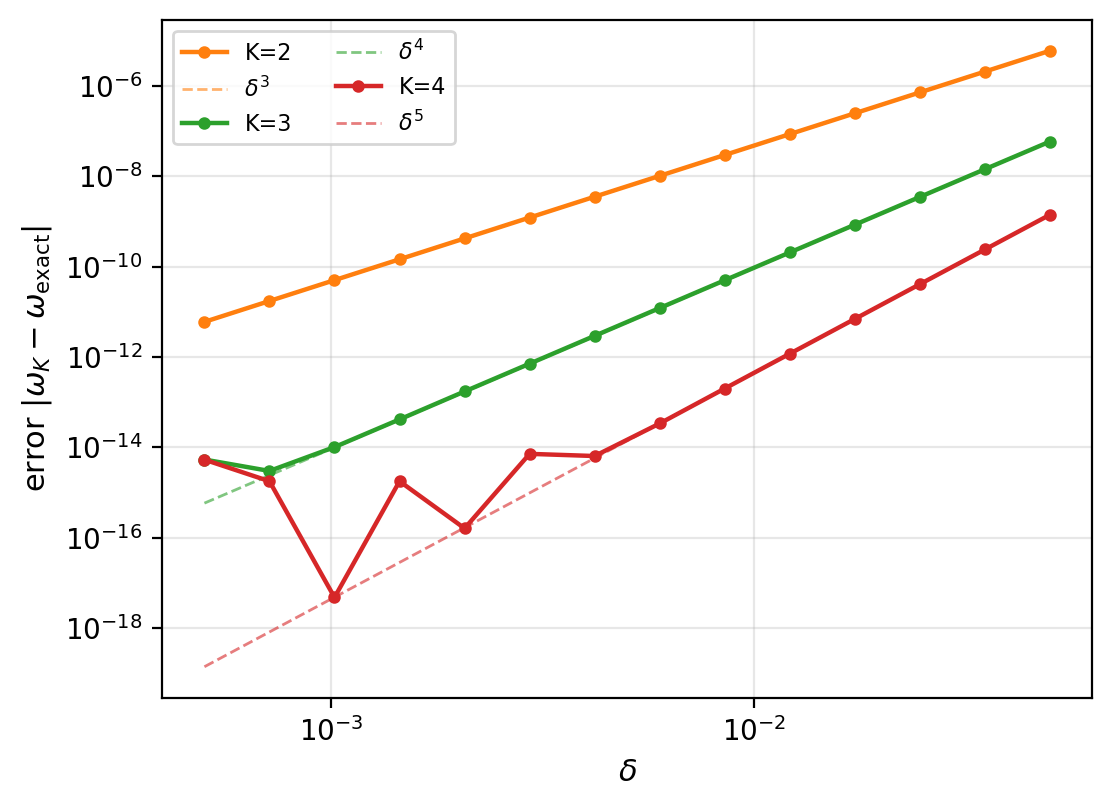}
         \caption{$\delta$-convergence}
         \label{fig:nonsub_single_resonator_linear2}
     \end{subfigure}

     \vspace{1em} 

     \begin{subfigure}[b]{0.45\textwidth}
         \centering
         \includegraphics[width=\textwidth]{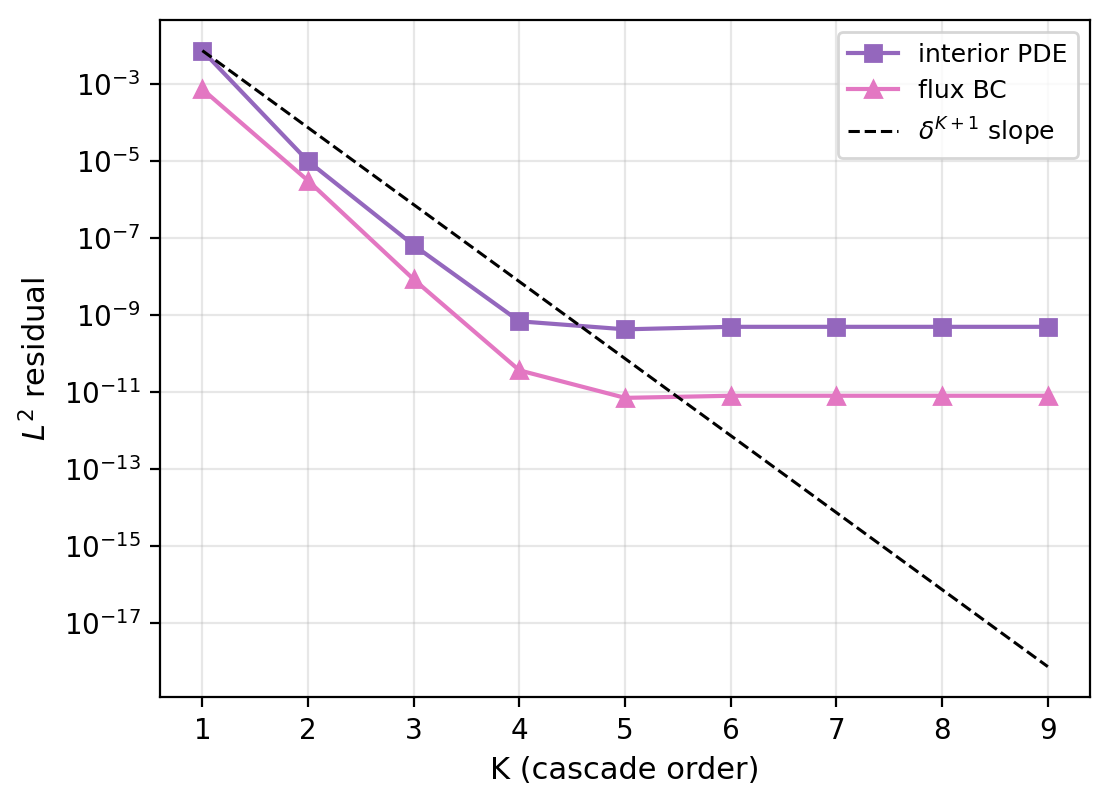}
         \caption{Operator residuals at $\delta = 0.01$}
         \label{fig:nonsub_single_resonator_linear3}
     \end{subfigure}
     
     \caption{Comparison of perturbative approach versus exact solution of the second resonance frequency for a single resonator with radius $r=0.1$, interior wave speed $v_1 = 1/3.48$ and exterior wave speed $v=1$.}
     \label{fig:nonsub_single_resonator_linear}
\end{figure}
\begin{figure}[htbp]
     \centering
     \begin{subfigure}[b]{0.45\textwidth}
         \centering
         \includegraphics[width=\textwidth]{figures/nonsub_monomer_linear_Kconvergence_a0p1_v0p2874_l0p1_sigma0_delta0p01.png}
         \caption{K-convergence of $\omega$ at $\delta = 0.01$}
         \label{fig:nonsub_dimer_linear1}
     \end{subfigure}
     \hfill
     \begin{subfigure}[b]{0.45\textwidth}
         \centering
         \includegraphics[width=\textwidth]{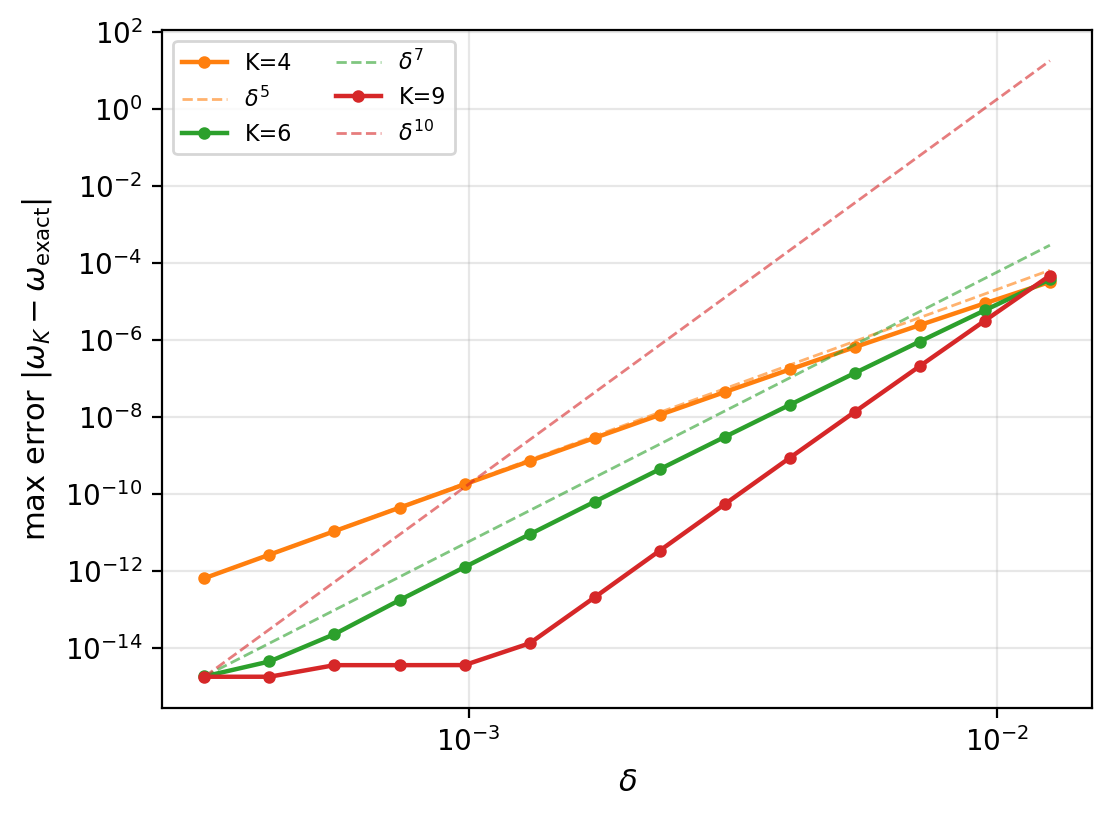}
         \caption{$\delta$-convergence}
         \label{fig:nonsub_dimer_linear2}
     \end{subfigure}
     \caption{Comparison of perturbative approach versus multipole solution of the second resonance for a two sphere system with radii $r=0.1$, distance $0.3$, interior wave speed $v_1=v_2=1/3.48$ and exterior wave speed $v=1$.}
     \label{fig:nonsub_dimer_linear}
\end{figure}
\begin{figure}[htbp]
     \centering
     \begin{subfigure}[b]{0.45\textwidth}
         \centering
         \includegraphics[width=\textwidth]{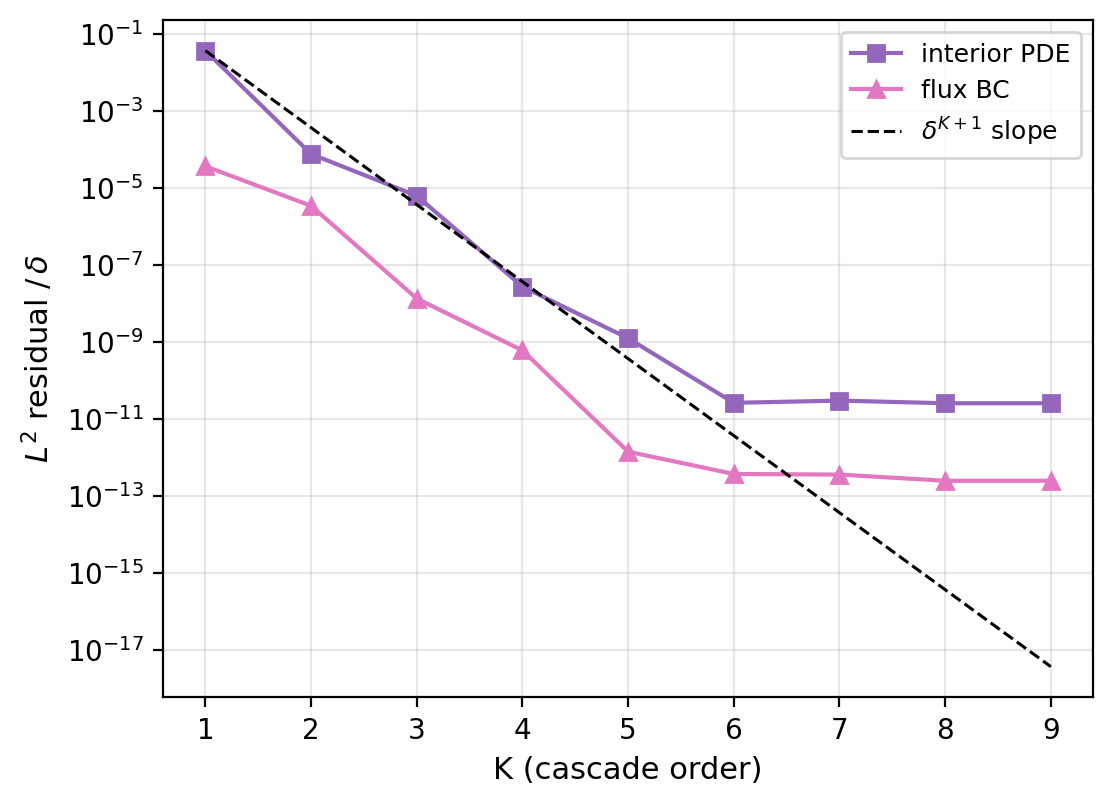}
         \caption{K-convergence of $\omega$ at $\delta = 0.01$}
         \label{fig:nonsub_nonlinear_single1}
     \end{subfigure}
     \hfill
     \begin{subfigure}[b]{0.45\textwidth}
         \centering
         \includegraphics[width=\textwidth]{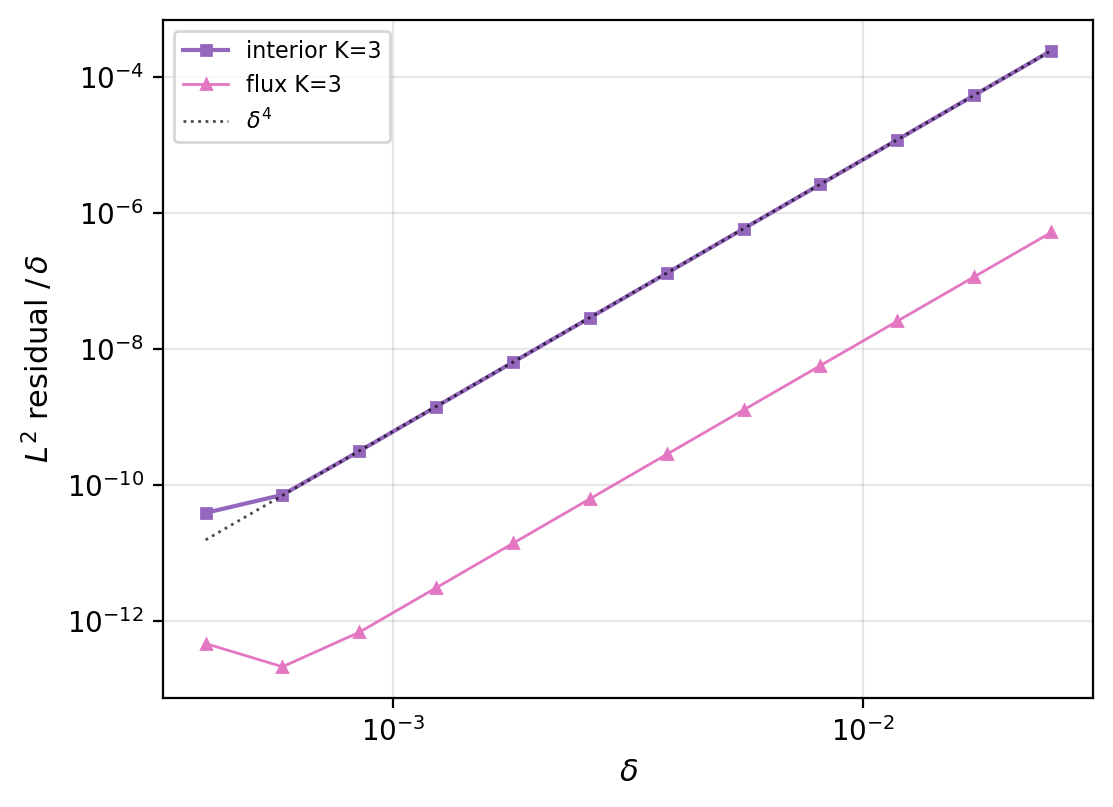}x
         \caption{$\delta$-convergence}
         \label{fig:nonsub_nonlinear_single2}
     \end{subfigure}

     \vspace{1em} 

     \begin{subfigure}[b]{0.45\textwidth}
         \centering
         \includegraphics[width=\textwidth]{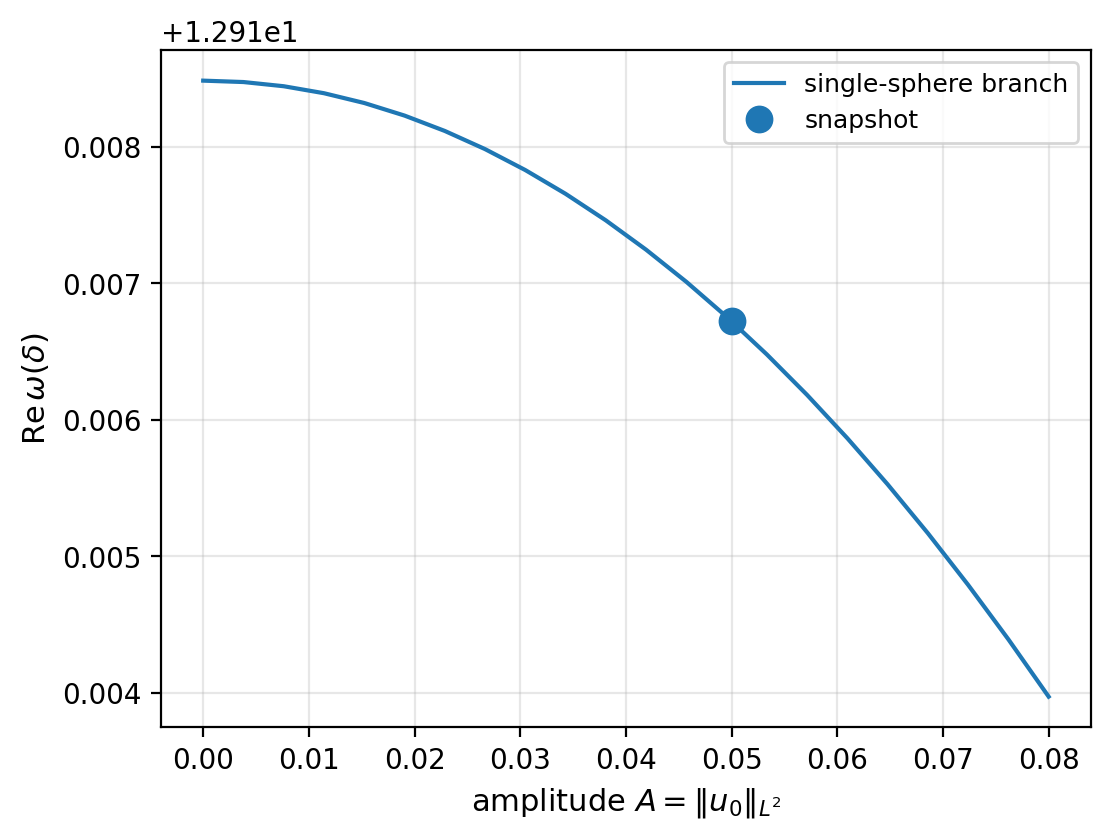}
         \caption{Bifurcation diagram of the discrete system}
         \label{fig:nonsub_nonlinear_single3}
     \end{subfigure}
     
     \caption{Small amplitude soliton bifurcating from the second resonance frequency for a single resonator with radius $r=0.1$, interior wave speed $v_1 = 1/3.48$ and exterior wave speed $v=1$.}
     \label{fig:nonsub_nonlinear_single}
\end{figure}

\subsection{Symmetry Breaking Bifurcations in Symmetric Dimers}
In this subsection, we study the nonlinear problem \eqref{eq:nonlinear_problem} for a symmetric dimer setup of two identical spherical resonators of radius $r=0.1$, separated by a distance of $d=0.2$, with interior wave speed $v_1=v_2 = 1$. By the reflection symmetry of the dimer, 
the capacitance matrix has the form
\begin{equation}\label{eq:dimer_capacitance}
    \mathcal{C} = \begin{pmatrix}a & b \\ b & a \end{pmatrix},
\end{equation}
with eigenvalues $\lambda_\pm = v_0^2(a \pm b)/\lvert D_0\rvert$  corresponding to the symmetric and antisymmetric 
eigenmodes. Moreover, the entries $a$ and 
$b$ admit closed-form series expressions in bispherical 
coordinates~\cite{lekner2012electrostatics}, valid for all separations 
including nearly touching configurations. This explicit form enables an 
analytical study of the nonlinear branches. 
Substituting $\mathbf{a} = (x, y)$ into the discrete nonlinear 
capacitance system~\eqref{eq:discrete_nonlinear} and writing $\lambda = 
\omega^2$ for brevity, we obtain
\begin{equation}\label{eq:dimer_real_system}
    a x + b y = \lambda(x + \sigma x^3), \qquad 
    b x + a y = \lambda(y + \sigma y^3),
\end{equation}
where, by $S^1$ invariance, we may take $x, y \in \mathbb{R}$. Adding 
and subtracting~\eqref{eq:dimer_real_system} yields the factored system
\begin{align}
    (x+y)\bigl[(a+b) - \lambda - \lambda\sigma(x^2 - xy + y^2)\bigr] &= 0, \\
    (x-y)\bigl[(a-b) - \lambda - \lambda\sigma(x^2 + xy + y^2)\bigr] &= 0,
\end{align}
from which three nontrivial branches emerge:

\textbf{Symmetric branch} ($x = y$). The first factor vanishes, yielding
\[
    \sigma x^2 = \frac{a + b - \lambda}{\lambda}.
\]
This branch bifurcates from the trivial solution at $\lambda = a + b$ 
along the symmetric eigendirection.

\textbf{Antisymmetric branch} ($x = -y$). The second factor vanishes, 
yielding
\[
    \sigma x^2 = \frac{a - b - \lambda}{\lambda}.
\]
This branch bifurcates from the trivial solution at $\lambda = a - b$ 
along the antisymmetric eigendirection.

\textbf{Asymmetric branch} ($x \neq \pm y$). Both factors vanish 
simultaneously, yielding
\[
    xy = -\frac{b}{\sigma \lambda}, \qquad 
    x^2 + y^2 = \frac{a - \lambda}{\sigma \lambda}.
\]
This branch is real and distinct from the previous two if and only if 
$(a - \lambda)^2 > 4 b^2$ and the resulting quadratic for $x^2, y^2$ 
admits two positive roots. It corresponds to states localised 
predominantly on one resonator and emerges as a secondary bifurcation 
from one of the primary branches at a critical amplitude.

The emergence of symmetry breaking bifurcations is well studied \cite{eilbeck1985discrete, kirr2008symmetry, aschbacher2002symmetry}. Indeed, the above characterisation has been done in \cite{eilbeck1985discrete} for a similar setup, moreover, therein the authors also explicitly treat trimer and quadrimer like systems. Symmetry breaking bifurcations were also proven to emerge in a high-refractive framework \cite{ammari2026dielectric}.

In Figure \ref{fig:dimer_bifurcation}, we can see the resulting bifurcation diagram, with the symmetry breaking mode bifurcating from the symmetric mode. Figures \ref{fig:dimer_antisym}, \ref{fig:dimer_sym} and \ref{fig:dimer_asym} show error convergence as well as field plots for the three points marked in Figure \ref{fig:dimer_bifurcation}.
\begin{figure}
    \centering
    \includegraphics[width=0.5\linewidth]{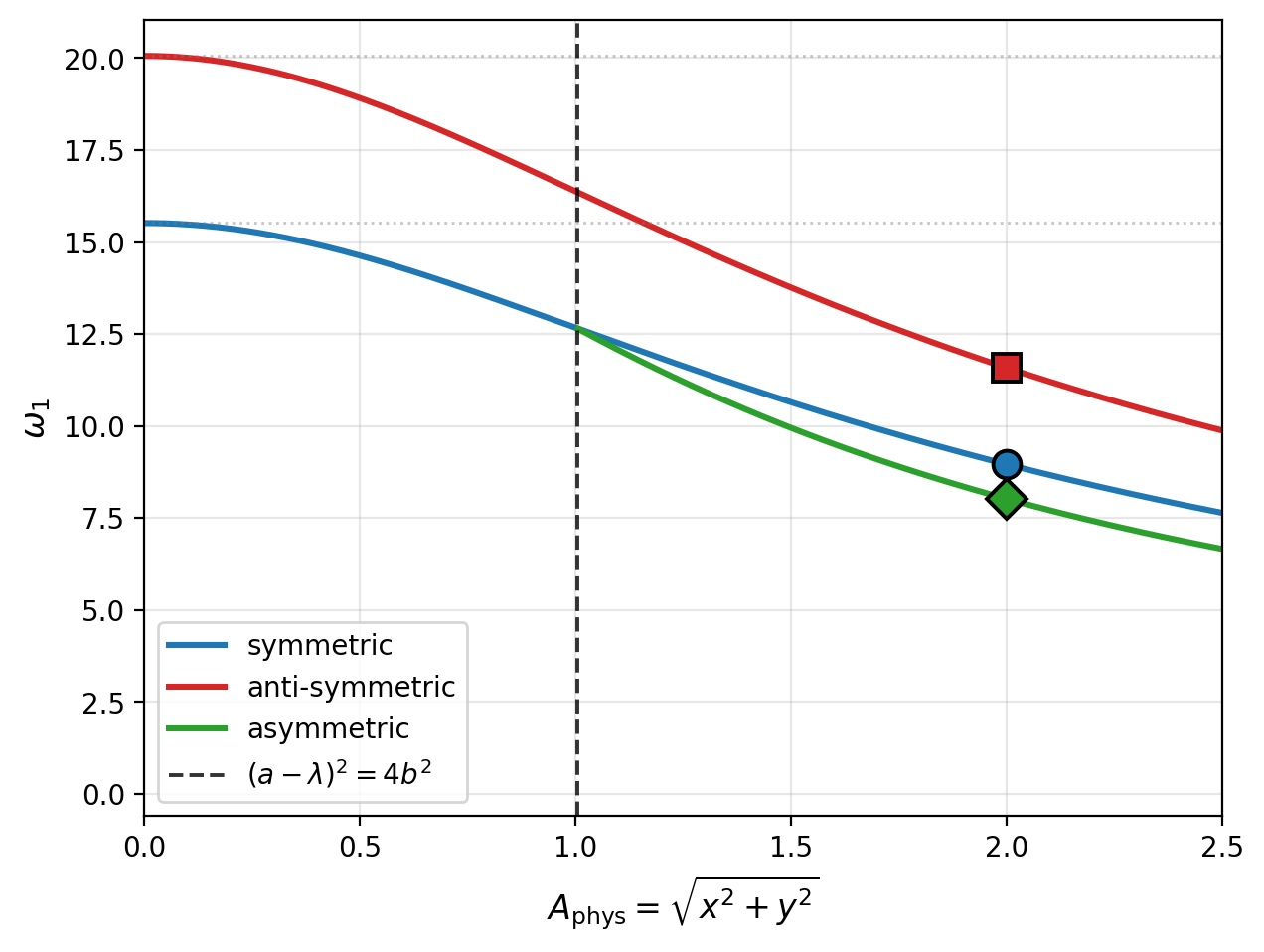}
    \caption{Bifurcation diagram for discrete dimer model, the markers denote the snapshots for the subsequent plots.}
    \label{fig:dimer_bifurcation}
\end{figure}

\begin{figure}[htbp]
     \centering
     \begin{subfigure}[b]{0.45\textwidth}
         \centering
         \includegraphics[width=\textwidth]{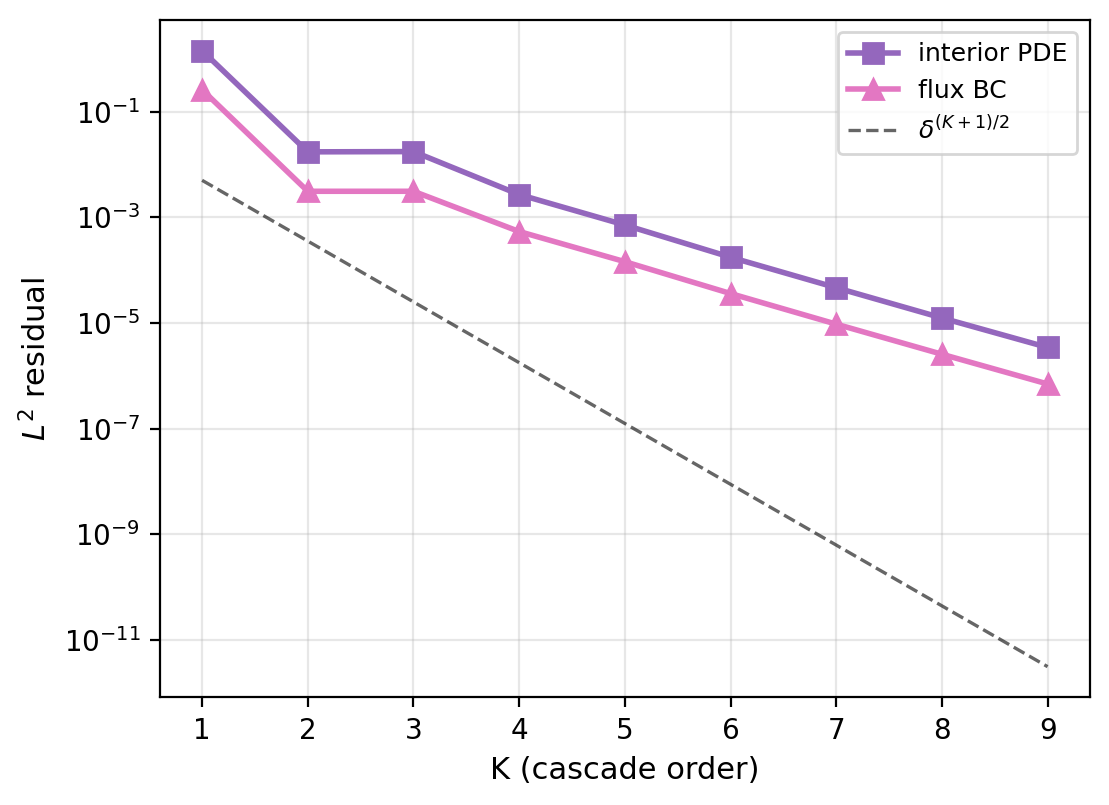}
         \caption{K-convergence of $\omega$ at $\delta = 0.005$}
         \label{fig:dimer_antisym1}
     \end{subfigure}
     \hfill
     \begin{subfigure}[b]{0.45\textwidth}
         \centering
         \includegraphics[width=\textwidth]{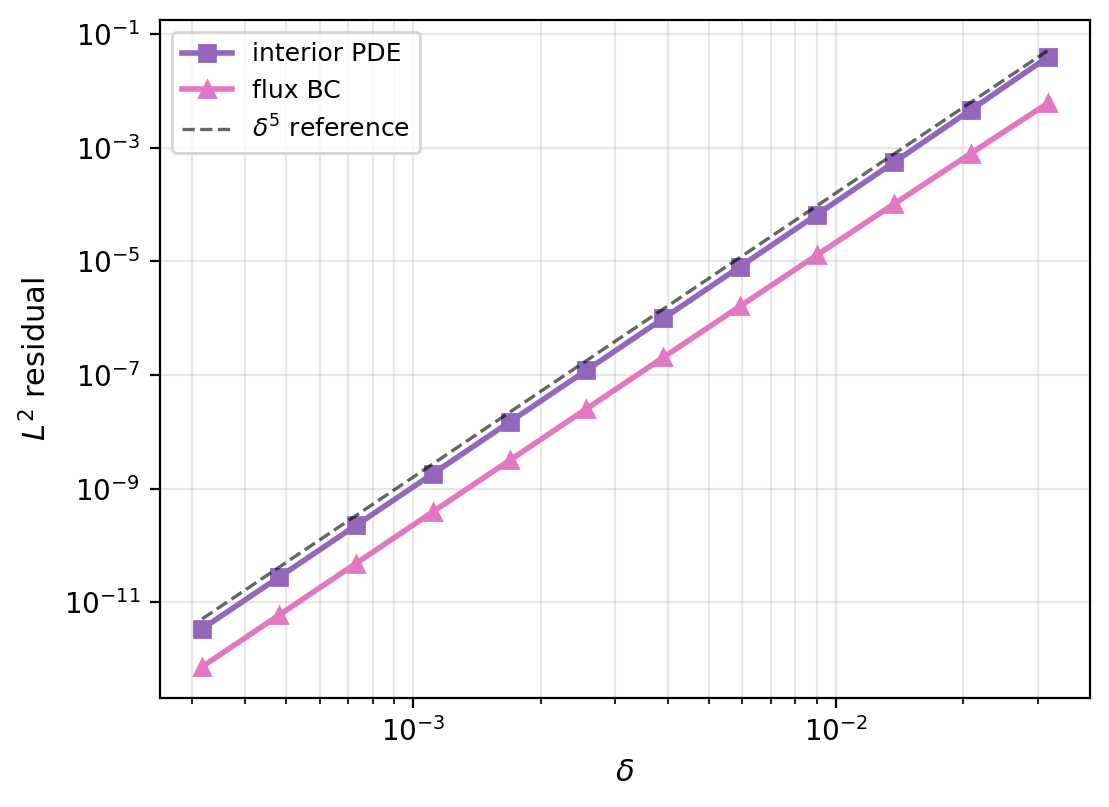}
         \caption{$\delta$-convergence at $K=9$}
         \label{fig:dimer_antisym2}
     \end{subfigure}

     \vspace{1em} 

     \begin{subfigure}[b]{0.45\textwidth}
         \centering
         \includegraphics[width=\textwidth]{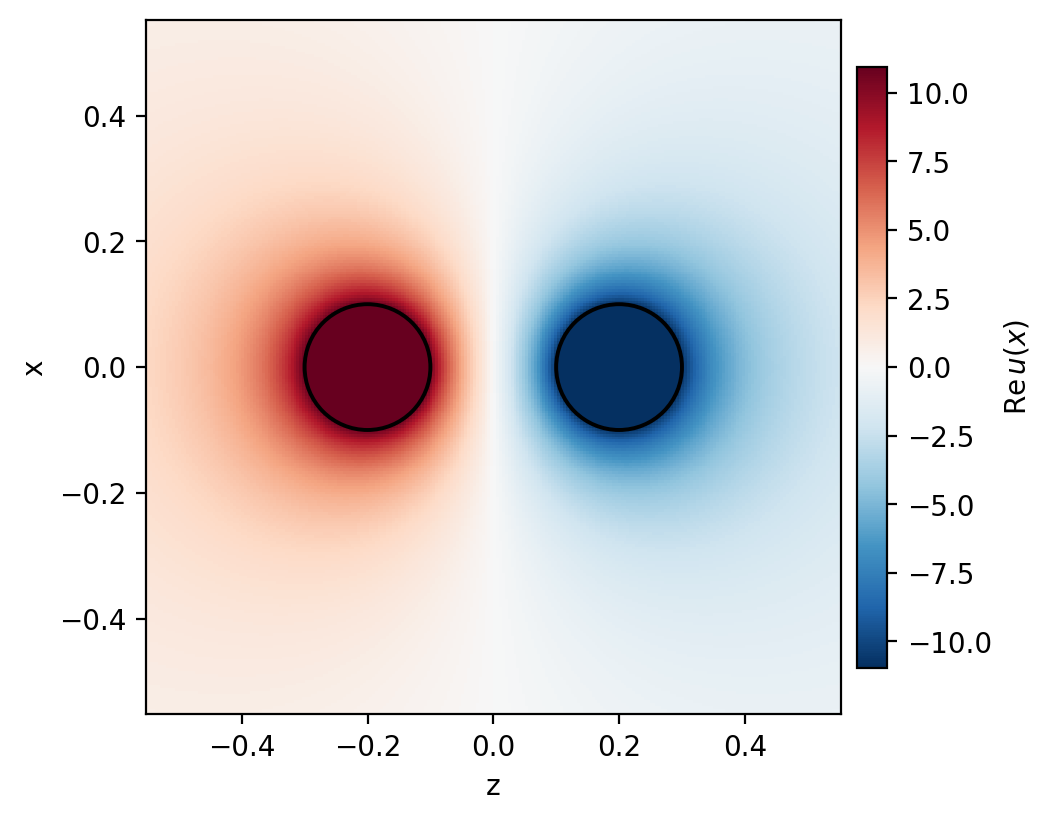}
         \caption{Plot of $\Re(u)$ at $\delta=0.005$}
         \label{fig:dimer_antisym3}
     \end{subfigure}
     
     \caption{Error and field plots for the antisymmetric mode. Note the that in figure \ref{fig:dimer_antisym1} the rate of convergence does not match the reference slope. Numerical evidence suggests that this is due to the exponential growth of the coefficient $\omega_n,u_n$.}
     \label{fig:dimer_antisym}
\end{figure}

\begin{figure}[htbp]
     \centering
     \begin{subfigure}[b]{0.45\textwidth}
         \centering
         \includegraphics[width=\textwidth]{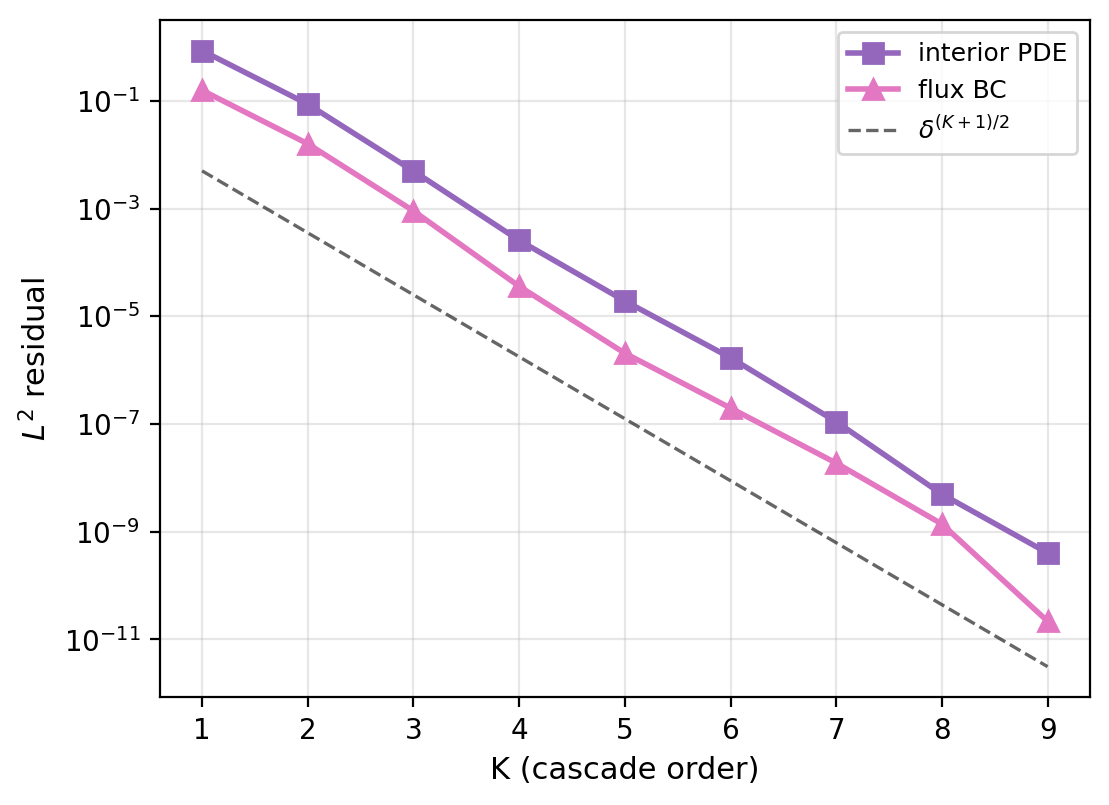}
         \caption{K-convergence of $\omega$ at $\delta = 0.01$}
         \label{fig:dimer_sym1}
     \end{subfigure}
     \hfill
     \begin{subfigure}[b]{0.45\textwidth}
         \centering
         \includegraphics[width=\textwidth]{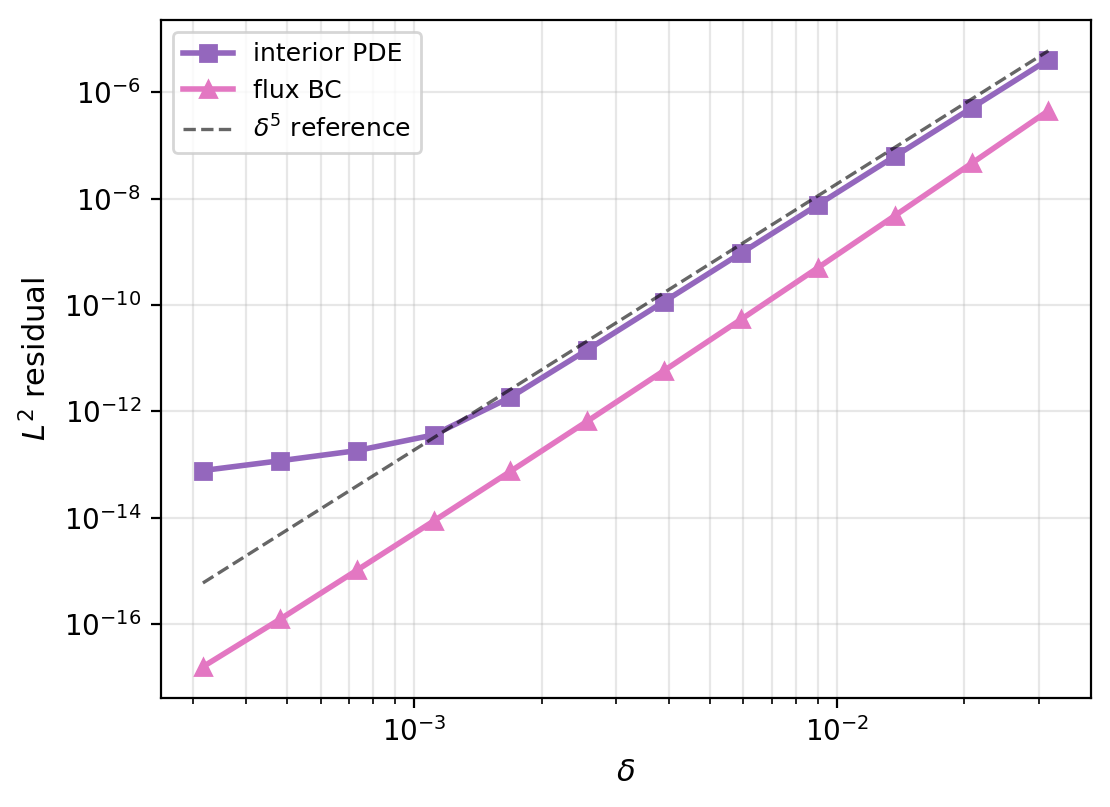}
         \caption{$\delta$-convergence at $K=9$}
         \label{fig:dimer_sym2}
     \end{subfigure}

     \vspace{1em} 

     \begin{subfigure}[b]{0.45\textwidth}
         \centering
         \includegraphics[width=\textwidth]{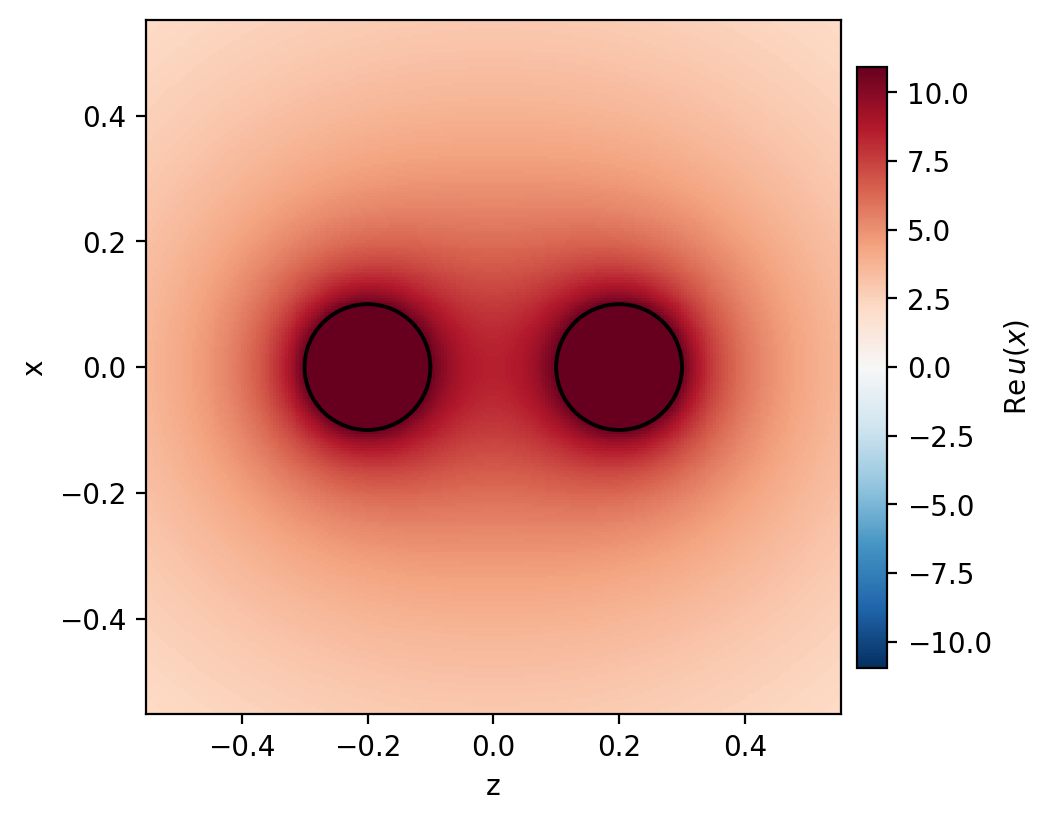}
         \caption{Plot of $\Re(u)$ at $\delta = 0.01$}
         \label{fig:dimer_sym3}
     \end{subfigure}
     
     \caption{Error and field plots for the symmetric mode. As in Figure \ref{fig:single_resonator_nonlinear2}, the error plateau in \ref{fig:dimer_sym2} is most likely due to truncation error.}
     \label{fig:dimer_sym}
\end{figure}
\begin{figure}[htbp]
     \centering
     \begin{subfigure}[b]{0.45\textwidth}
         \centering
         \includegraphics[width=\textwidth]{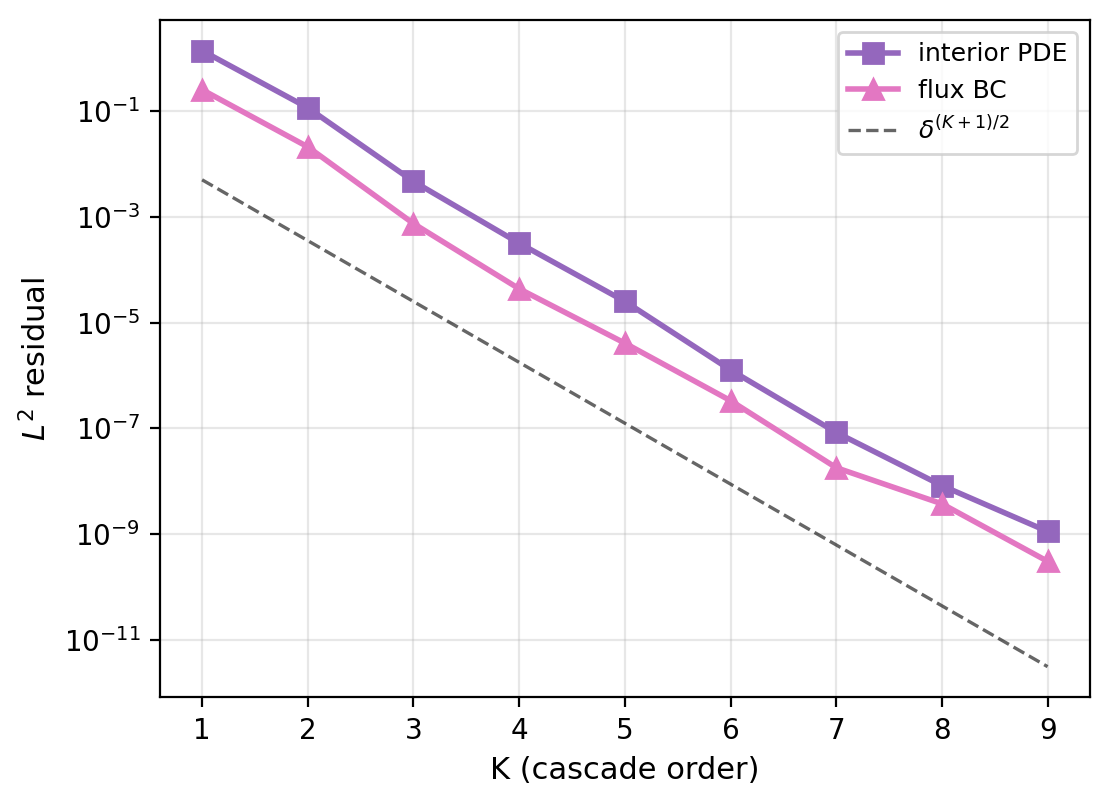}
         \caption{K-convergence of $\omega$ at $\delta = 0.01$}
         \label{fig:dimer_asym1}
     \end{subfigure}
     \hfill
     \begin{subfigure}[b]{0.45\textwidth}
         \centering
         \includegraphics[width=\textwidth]{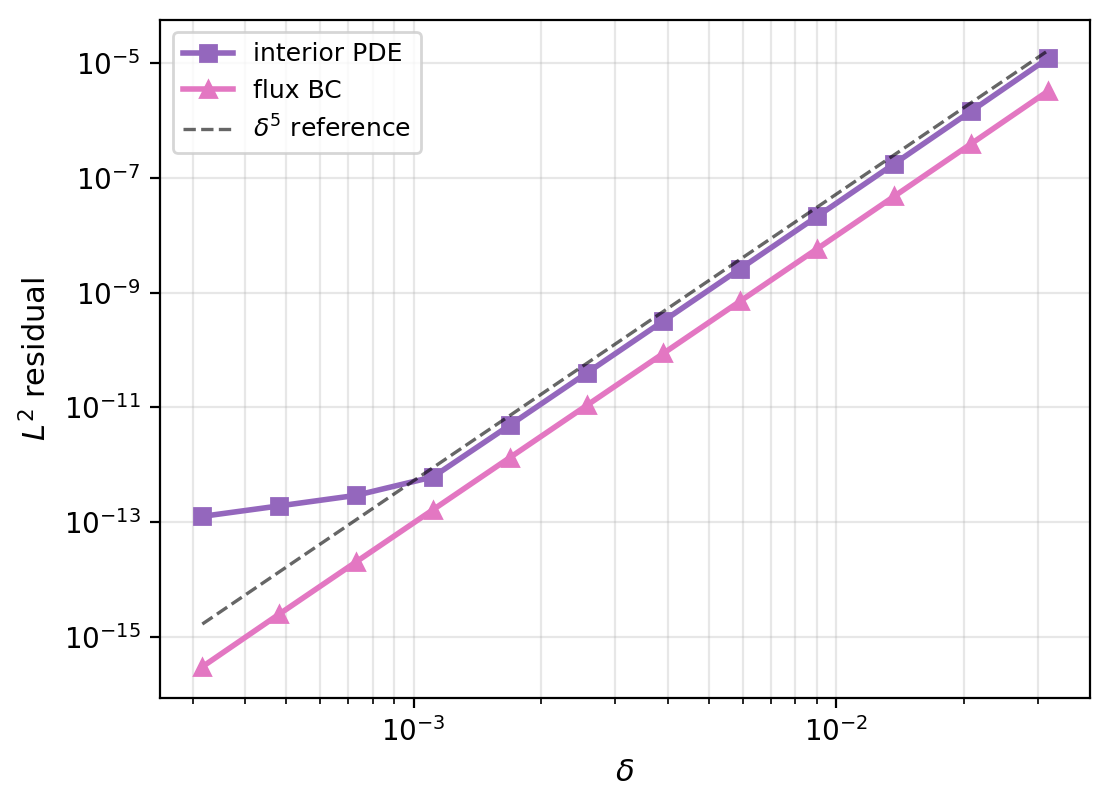}
         \caption{$\delta$-convergence at $K=9$}
         \label{fig:dimer_asym2}
     \end{subfigure}

     \vspace{1em} 

     \begin{subfigure}[b]{0.45\textwidth}
         \centering
         \includegraphics[width=\textwidth]{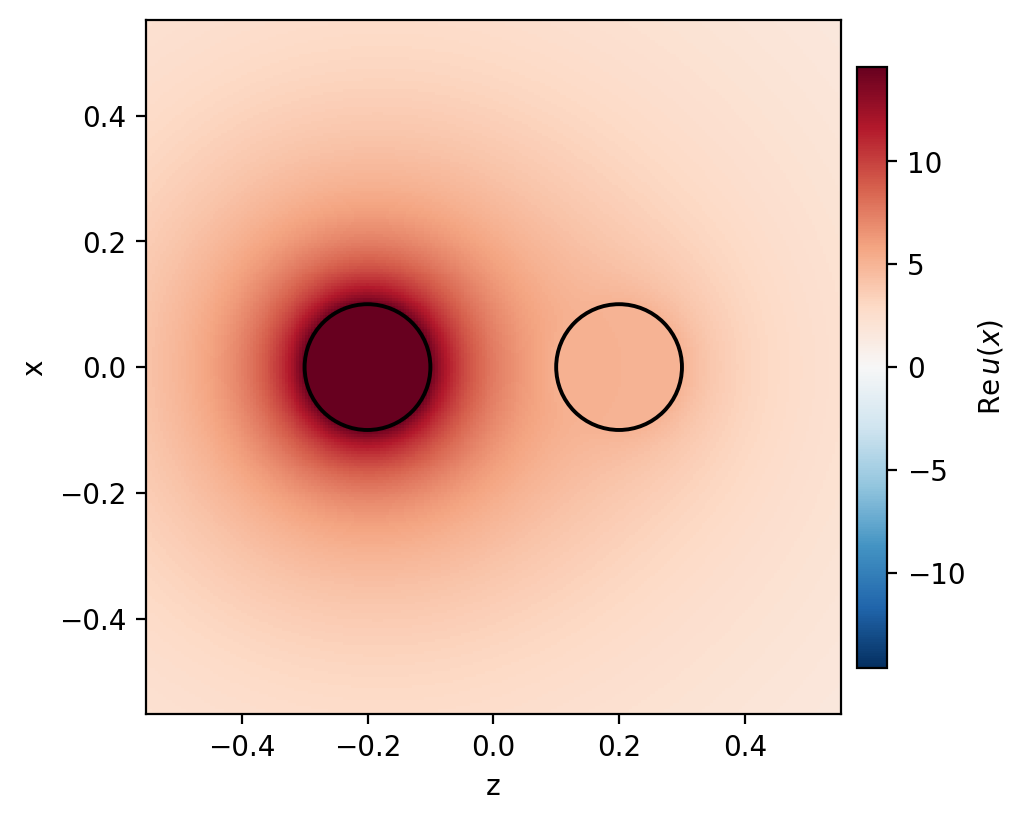}
         \caption{Plot of $\Re(u)$ at $\delta = 0.01$}
         \label{fig:dimer_asym3}
     \end{subfigure}
     
     \caption{Error and field plots for the asymmetric mode. One again observes the familiar error plateau in Figure  \ref{fig:dimer_asym2}.}
     \label{fig:dimer_asym}
\end{figure}

\section{Conclusion}

In this paper, we have studied the nonlinear Helmholtz problem with 
outgoing radiation condition and cubic nonlinearity in the high-contrast 
regime. Using perturbative techniques, we have established a two-way 
correspondence between the emerging solitons and a discrete model: under 
a non-degeneracy assumption on the discrete data, every solution of the 
nonlinear capacitance system lifts to a continuous soliton, and every 
continuous soliton with the natural subwavelength scaling reduces to a 
discrete one. The capacitance matrix therefore faithfully captures the 
nonlinear resonant behaviour of the full system in this regime.

The result is constructive: the cascade equations yield an iterative 
algorithm for computing correction terms to arbitrary order, in both 
the linear and nonlinear, subwavelength and non-subwavelength regimes. Moreover, analyticity is shown in $\sqrt{\delta}$, respectively in $\delta$. To 
our knowledge, this is the first existence result for both subwavelength 
and non-subwavelength solitons in the high-contrast regime. Beyond 
establishing the correspondence itself, the reduction to a finite 
discrete system makes techniques from the discrete self-trapping and 
discrete nonlinear Schr\"odinger literature applicable to subwavelength 
resonance problems. As a first illustration, we have shown that the 
symmetric dimer admits a symmetry-breaking bifurcation with a criterion 
expressed cleanly in terms of the eigenvalues of the generalised 
capacitance matrix.

\section*{Acknowledgements}
C. Thalhammer was supported in part by the Swiss National Science Foundation grant number 200021-236472. 

\section*{Data Availability}
The code used to generate the Figures is openly available at \url{https://github.com/cthalhammer/perturbative-nonlinear-helmholtz}.

\appendix

\printbibliography

\end{document}